\newcommand{\f}{\ensuremath{\varphi}}
\newcommand{\p}{\ensuremath{\psi}}
\newcommand{\x}{\ensuremath{\chi}}
\newcommand{\si}{\sigma}
\newcommand{\De}{\Delta}
\newcommand{\Ga}{\Gamma}
\newcommand{\Si}{\Sigma}
\newcommand{\mdl}[1]{\models_{\lgc{#1}}}
\newcommand{\imp}{\Rightarrow}
\newcommand{\lang}{{\mathcal L}}
\newcommand{\cls}[1]{\mathcal{#1}}
\newcommand{\alg}[1]{\mathbf{#1}}
\newcommand{\spa}[1]{
  \smash{%
    \raise-12pt\hbox{%
      \vbox{\hsize  7.5pt%
        \hbox{\centerline{$#1$}}      	
        \hbox{\raise9pt\hbox{\,{\tiny $\sim$}}}
      }%
    }%
  }
}
\newcommand{\cop}[1]{\mathbb{#1}}
\newcommand{\eq}{\approx}
\newcommand{\leqn}{\preccurlyeq}
\newcommand{\lgc}[1]{\mathrm{#1}}
\newcommand{\id}{\mathrm{id}}
\newcommand{\Ug}{{\ensuremath{\mathbb{U}}}}
\newcommand{\Ugp}{{\ensuremath{\mathbb{U}^+}}}
\newcommand{\Qg}{{\ensuremath{\mathbb{Q}}}}
\newcommand{\Vg}{{\ensuremath{\mathbb{V}}}}
\newcommand{\K}{{\ensuremath{\mathcal{K}}}}
\newcommand{\F}{{\ensuremath{\alg{F}}}}
\newcommand{\Q}{{\ensuremath{\mathcal{Q}}}}
\newcommand{\U}{{\ensuremath{\mathcal{U}}}}
\DeclareMathOperator{\DM}{\mathcal{DMA}}
\DeclareMathOperator{\KA}{\mathcal{KA}}
\DeclareMathOperator{\BDL}{\mathcal{BDL}}
\DeclareMathOperator{\ST}{\mathcal{ST}}
\DeclareMathOperator{\XDM}{\mathsf{D_{\DM}}}
\DeclareMathOperator{\ADM}{\mathsf{E_{\DM}}}
\DeclareMathOperator{\XK}{\mathsf{D_{\KA}}}
\DeclareMathOperator{\AK}{\mathsf{E_{\KA}}}
\DeclareMathOperator{\XBDL}{\mathsf{D_{\BDL}}}
\DeclareMathOperator{\ABDL}{\mathsf{E_{\BDL}}}
\DeclareMathOperator{\XST}{\mathsf{D_{\ST}}}
\DeclareMathOperator{\AST}{\mathsf{E_{\ST}}}
\DeclareMathOperator{\XA}{\mathsf{D_{\cls{A}}}}
\DeclareMathOperator{\AAA}{\mathsf{E_{\cls{A}}}}
\DeclareMathOperator{\XB}{\mathsf{D_{\cls{B}}}}
\DeclareMathOperator{\AAB}{\mathsf{E_{\cls{B}}}}
\newtheorem{theorem}{Theorem}
\newtheorem{lemma}[theorem]{Lemma}
\newtheorem{example}[theorem]{Example}
\newtheorem{corollary}[theorem]{Corollary}
\newtheorem{proposition}[theorem]{Proposition}
\begin{document}


\title{Admissibility via Natural Dualities}

\author{Leonardo Cabrer and George Metcalfe}

\address{Mathematics Institute, University of Bern\\
Sidlerstrasse 5, 3012 Bern, Switzerland}
\email{lmcabrer@yahoo.com.ar}
\email{george.metcalfe@math.unibe.ch}


\begin{abstract}
It is shown that admissible clauses and quasi-identities of quasivarieties generated by 
a single finite algebra, or equivalently, the quasiequational and universal theories of their 
free algebras on countably infinitely many generators, may be characterized using natural dualities. 
In particular, axiomatizations are obtained for the admissible clauses and quasi-identities of 
bounded distributive lattices, Stone algebras, Kleene algebras and lattices, 
and De~Morgan algebras and lattices.
\end{abstract}

\keywords{
admissibility, natural dualities, quasivarieties, free algebras,  De Morgan algebras}
\thanks{2010 {\it Mathematics Subject Classification.} 
Primary: 
08C15, 
08C20, 
06D30, 
08B20. 
Secondary:
06D50, 
03C05, 
03C13 
}

\maketitle


\section{Introduction}

The concept of admissibility was  introduced explicitly by Lorenzen in the 1950s in the 
context of intuitionistic logic~\cite{Lor55}, but played an important role already in the earlier work of 
Gentzen~\cite{Gen35}, Johansson~\cite{Joh36}, and Whitman~\cite{Whi41}. Informally, 
a rule, consisting of a finite set of premises and a conclusion, is admissible for a system 
if adding it to the system produces no new theorems. Equivalently, the rule is 
admissible if for each of its instances, whenever the premises are theorems, the 
conclusion is also a theorem. Typically, proving that a rule is admissible 
for a system establishes that the system, or corresponding  class of structures, 
has a certain property. For example, proving the admissibility of a ``cut'' rule for a proof system 
is a standard proof-theoretic strategy for establishing the completeness of the system for a given semantics 
(see, e.g.~\cite{Gen35,MOG08}). Similarly, proving the admissibility of a ``density'' rule for a proof 
system has been used to establish that certain classes of algebraic structures are generated as 
varieties by their dense linearly ordered members (see, e.g.~\cite{MM07,MOG08}). More generally, 
the addition of admissible rules to a system may be used to shorten proofs or speed up proof search. 

In classical propositional logic, admissibility and derivability coincide; that is, the logic is structurally complete. 
However, this is  not the case for non-classical logics such as intuitionistic logic and certain modal, many-valued, and substructural logics 
which formalize reasoning about, e.g., necessity and knowledge, vagueness and probability, and bounded resources and relevance. 
In particular, Rybakov proved in~\cite{Ryb84} (see also the monograph~\cite{Ryb97}) that the set of admissible rules of 
intuitionistic propositional logic is decidable but not finitely axiomatizable. 
Axiomatizations (called bases) of the admissible rules for this logic were later provided,  
independently, by Iemhoff~\cite{Iem01} and Rozi{\`e}re~\cite{Roz92}, and subsequently 
for other intermediate logics by Iemhoff~\cite{Iem05} and Cintula and Metcalfe~\cite{CM10},  transitive modal logics 
by Je{\v r}{\'a}bek~\cite{Jer05}, temporal logics by Babyneshev and Rybakov~\cite{BR11a,BR11b}, 
and \L ukasiewicz many-valued logics  by Je{\v r}{\'a}bek~\cite{Jer09a,Jer09b}. 
Bases are also given in these works for admissible multiple-conclusion rules where admissibility means that for each 
instance of the rule, whenever the premises are theorems, one of the conclusions is a theorem.

An algebraizable logic -- roughly speaking, a logic with well-behaved translations between formulas and 
identities -- is associated with a class of algebras called the equivalent algebraic semantics of the logic. 
This class is usually a quasivariety, in which case, rules correspond to quasi-identities and 
multiple-conclusion rules correspond to clauses (implications between conjunctions of identities and disjunctions of identities). 
Admissibility then amounts to the validity of quasi-identities or clauses in the 
free algebra on countably infinitely many generators of the quasivariety. Hence the study of 
admissibility may be viewed algebraically as the study of quasiequational and universal theories of 
free algebras. In particular, a crucial role is played here by subalgebras of (powers of) the free algebra 
on countably infinitely many generators. Indeed, in certain cases these algebras may be described 
and investigated using dualities. Notably, characterizations of admissible rules for intermediate 
and modal logics have  been obtained using Kripke 
frames~\cite{Ghi99,Ghi00,Iem01,Iem05,Jer05,CM10}, and for infinite-valued 
\L ukasiewicz logic using rational polyhedra~\cite{Jer09a,Jer09b}. 

For quasivarieties generated by a single finite algebra (e.g., Boolean algebras, distributive lattices, Stone algebras, 
Kleene algebras, De~Morgan algebras), an accommodating and powerful 
dual framework may be provided by the theory of natural dualities (see~\cite{CD98}). 
In this setting, each algebra in the quasivariety is represented as an algebra of continuous 
morphisms over some structured Stone space, and there exists a dual equivalence 
between the quasivariety and the corresponding category of structured Stone spaces.
Crucially for the approach described here, in the presence of a natural duality, 
free algebras of the quasivariety have a convenient representation on the dual side. 
The main goal of this paper is to show how natural dualities can be used to obtain bases for the  
admissible clauses and quasi-identities of quasivarieties generated by a finite algebra, or equivalently, axiomatizations 
of the quasiequational and universal theories of their free algebras on countably infinitely many generators. 
More precisely, it is shown that these bases can be derived from characterizations of the finite subalgebras 
of (finite powers of) the free algebras on countably infinitely many generators. These 
characterizations are obtained in turn via a combinatorial description of the dual objects of the algebras. 
Note that algorithms for checking admissibility of quasi-identities (but not clauses) in finitely generated quasivarieties 
have been obtained algebraically in~\cite{MR12,MR13} but general methods for characterizing and axiomatizing 
admissible quasi-identities and clauses has until now been lacking.

As case studies, bases are obtained for the admissible quasi-identities and clauses of 
bounded distributive lattices, Stone algebras, De~Morgan algebras and lattices, and Kleene algebras and lattices. 
Not only do these classes of algebras possess natural dualities already available in the literature~\cite{CD98}, 
they have also been proposed as suitable semantics for applications in computer science.  For example, De~Morgan algebras 
(also called distributive $i$-lattices or quasi-Boolean algebras) have been used as suitable models for reasoning in databases 
(containing, e.g., extra elements representing ``both true and false'' and ``neither true nor false'') and as the basis for fuzzy logic systems 
(see, e.g., \cite{Bel77,Geh2003}). 
Some of these classes of algebras can be tackled 
without natural dualities (in particular, admissibility in bounded distributive lattices 
and Stone algebras admits a straightforward algebraic treatment). However, for other cases, 
such as Kleene and De Morgan algebras, 
the natural dualities play a crucial role in rendering the task feasible. A central goal of this paper  
is to emphasize the uniform approach to tackling these problems. 
 
 Following some preliminaries  in Section~\ref{s:preliminaries} 
 on universal algebra and natural dualities, Section~\ref{s:admissibility} develops the necessary 
 tools for characterizing admissible clauses and quasi-identities, 
 focussing on locally finite quasivarieties. This approach is 
 illustrated first in Section~\ref{s:distributive} with the relatively straightforward cases of (bounded) 
 distributive lattices and Stone algebras. Sections~\ref{s:demorgan} and~\ref{s:kleene} then treat, 
 respectively, the more technically demanding cases of De~Morgan algebras and lattices and Kleene algebras and lattices.


\section{Preliminaries}\label{s:preliminaries}

Let us begin by recalling some basic material on universal algebra and natural dualities that will be needed in later sections. 
The reader is referred to~\cite{ML71} for background and undefined notions in category theory and to~\cite{BS81} and~\cite{CD98} for 
detailed explanations, definitions, and proofs concerning universal algebra and natural dualities, respectively.


\subsection{Universal Algebra}

For convenience, let us assume in subsequent discussions that $\lang$ is a finite  
algebraic language and that an {\em $\lang$-algebra}  $\alg{A}$ is an algebraic structure 
for this language, called {\em trivial} if $|A| = 1$.  
We denote the formula algebra (absolutely free algebra) for $\lang$ over countably infinitely many variables (free generators) 
by $\alg{Fm_\lang}$ and let the metavariables  $\f,\p,\x$ stand for  members of ${\rm Fm}_\lang$ 
called {\em $\lang$-formulas}.  An {\em $\lang$-identity} is an ordered pair of $\lang$-formulas, written $\f \eq \p$, and 
we let the metavariables $\Si, \De$  stand for  finite sets of  $\lang$-identities. We 
also use $\f \leqn \p$ as an abbreviation for $\f \wedge \p \eq \f$ in the presence of the lattice meet operation. 

An {\em $\lang$-clause} is an ordered pair of finite sets of $\lang$-identities, written $\Si \imp \De$. 
An $\lang$-clause $\Si \imp \De$ is   an {\em $\lang$-quasi-identity} if $|\De| = 1$, an
 {\em $\lang$-positive clause} if $\Si = \emptyset$, and an  {\em $\lang$-negative clause} if $\De = \emptyset$. 
 If both $|\De| = 1$ and $\Si = \emptyset$, then the $\lang$-clause is identified with the single $\lang$-identity in $\De$.
  
Now let $\K$ be a class of $\lang$-algebras  and let $\Si \imp \De$ be an  $\lang$-clause. We write 
$\Si \mdl{\K} \De$ to denote that for every $\alg{A} \in \K$ and homomorphism $h \colon \alg{Fm_\lang} \to \alg{A}$, 
whenever $\Si \subseteq \ker h$, then $\De \cap \ker h \not = \emptyset$. 
In this case, we say that each $\alg{A} \in \K$ {\em satisfies} 
$\Si \imp \De$ and that $\Si \imp \De$ is {\em valid} in $\K$. 
That is,  $\Si \imp \De$ may be understood as the universal formula 
$(\forall \bar{x}) (\bigwedge \Ga \imp \bigvee \De)$ where $\bar{x}$ are the variables occurring in $\Ga \cup \De$, and 
$\bigwedge \emptyset$  and $\bigvee \emptyset$ are identified with the truth constants $1$ and $0$, respectively, of first-order logic. 
Conversely, an arbitrary universal formula of the language 
$\lang$ may be associated with a finite set of $\lang$-clauses, by putting the quantified 
formula into conjunctive normal form and interpreting each conjunct as an $\lang$-clause.

We abbreviate $\emptyset \mdl{\K} \De$ by $\mdl{\K} \De$, and $\Si \mdl{\{\alg{A}\}} \De$ by $\Si \mdl{\alg{A}} \De$.  
We also drop the brackets in $\Si,\De$ when no confusion may occur. As usual, if the language is clear from the context 
we may omit the prefix $\lang$ when referring to $\lang$-algebras, $\lang$-clauses, etc.
 
A set of $\lang$-clauses $\Lambda$ is said to {\em axiomatize} $\K$ when $\alg{A} \in \K$ 
if and only if (henceforth ``iff'') all the clauses in $\Lambda$ are satisfied by $\alg{A}$.  More generally, given another 
class $\K'$ of $\lang$-algebras, $\Lambda$ is said to {\em axiomatize $\K$ relative to $\K'$} when 
$\alg{A} \in \K$ iff both $\alg{A} \in \K'$ and $\alg{A}$ satisfies all the clauses  in $\Lambda$.
 
$\K$ is called an $\lang$-{\em universal class} if there exists a set of $\lang$-clauses $\Lambda$ that axiomatizes $\K$. 
If there exists such a $\Lambda$ consisting only of $\lang$-identities, $\lang$-quasi-identities, or $\lang$-positive clauses, 
then $\K$ is called, respectively, an $\lang$-{\em variety}, $\lang$-{\em quasivariety}, or $\lang$-{\em positive universal class}.  
The \emph{variety $\Vg(\K)$}, \emph{quasivariety $\Qg(\K)$}, \emph{positive universal class} $\Ugp(\K)$, 
and \emph{universal class $\Ug(\K)$ generated by} $\K$ are, respectively, the smallest 
$\lang$-variety, $\lang$-quasivariety, $\lang$-positive universal class, and $\lang$-universal class  containing $\K$. 

Let $\cop{H}$, $\cop{I}$, $\cop{S}$, $\cop{P}$, and $\cop{P}_U$ be, respectively, the class operators of 
taking homomorphic images, isomorphic images, subalgebras, products, and ultraproducts.  Then
\[
\begin{array}{rclcrcl}
\Vg(\K) & = & \cop{HSP}(\K) 		& \qquad	& \Qg(\K) 		& = & \cop{ISPP}_U(\K)\\[.05in]
\Ugp(\K) & = & \cop{HSP}_U(\K) 	& \qquad 	&  \Ug(\K) & = & \cop{ISP}_U(\K).
\end{array}
\]
 Moreover, if $\K$ is a finite set of finite algebras, then 
 \[
\begin{array}{rclcrclcrcl}
\Qg(\K) & = & \cop{ISP}(\K) & \qquad & \Ugp(\K) & = & \cop{HS}(\K)  & \qquad & \Ug(\K) & = & \cop{IS}(\K).
\end{array}
\]
We refer to~\cite[Theorems II.9.5, II.11.9, V.2.20, and V.2.25]{BS81} and~\cite[Exercise 3.2.2]{CK77} for further details. 

Let $\K$ be a class of $\lang$-algebras and $\kappa$ a cardinal. An algebra $\alg{B} \in \K$ is called a 
\emph{free $\kappa$-generated algebra for $\K$}  if there exists a set $X$ and a map $g\colon X\to \alg{B}$ such that 
$|X|=\kappa$, $g[X]$ generates $\alg{B}$, and for every 
$\alg{A} \in \K$ and map $f \colon X \to \alg{A}$ there exists a (unique)
homomorphism $h \colon \alg{B} \to \alg{A}$ satisfying $f=h\circ g$. In this case, each $x \in X$ 
is called a {\em free generator} of $\alg{B}$.

For any quasivariety $\Q$, there exists for each cardinal $\kappa$, a unique (modulo isomorphism) 
free $\kappa$-generated algebra $\F_{\Q}(\kappa)$ in $\Q$. If $\Q$ contains at least one non-trivial algebra, then the map 
$g$ above is one-to-one and we may identify $X$ with the set $g[X]$. 
 We denote the special case of the free $\omega$-generated 
algebra of $\Q$ by $\F_{\Q}$. 

Let us recall some useful facts and properties. First, 
 $\F_{\Vg(\Q)}=\F_\Q$  (see~\cite[Corollary II.11.10]{BS81}) and hence for any algebra $\alg{A}$, it follows that 
 $\F_{\Vg(\alg{A})} = \F_{\Vg(\Qg(\alg{A}))} = \F_{\Qg(\alg{A})}  \in \Qg(\alg{A})$. Next note that 
identifying the free generators of $\alg{Fm}_\lang$ with the free generators of $\F_{\Q}$,  there exists a unique homomorphism 
$h_\Q\colon \alg{Fm_\lang} \to \F_\Q$ extending the identity map. 
We recall (see~\cite[Corollary II.11.6]{BS81}) that, for each 
$\lang$-identity $\f \eq \p$,
\begin{equation}\tag{I}\label{Eq:ValidityFree}
\mdl{\Q} \f \eq \p \qquad {\rm iff } \qquad \mdl{\F_\Q} \f \eq \p  \qquad {\rm iff } \qquad h_\Q(\f) = h_\Q(\p). 
\end{equation}
We note also that, since quasivarieties are closed under taking direct products, 
for any finite set of $\lang$-identities $\Si \cup \De$, 
\begin{equation}\tag{II}\label{Eq:ValidityQuasi}
\Si \mdl{\Q} \De \qquad {\rm iff } \qquad  \Si \mdl{\Q} \f \eq \p\ \textrm{ for some }\f \eq \p \in \De.
\end{equation}
Finally, it is helpful to observe that
\begin{equation}\tag{III}\label{Eq:ValidityPos}
 \mdl{\Q} \De \qquad {\rm iff } \qquad  \mdl{\Q} \f \eq \p\ \textrm{ for some }\f \eq \p \in \De  \qquad {\rm iff } \qquad \mdl{\F_\Q} \De.
\end{equation}
It only remains to check here that  $\mdl{\F_\Q} \De$ implies $\mdl{\Q} \De$ as the other directions follow from 
(\ref{Eq:ValidityQuasi}) and  $\F_\Q \in \Q$. Because $\De$ is finite, 
it involves only $n$ variables for some $n \in \mathbb{N}$, and hence $\mdl{\Q} \De$ iff 
 $\mdl{\alg{A}} \De$ for each 
$n$-generated algebra $\alg{A}$ in $\Q$. But each such algebra  is a homomorphic image of $\F_\Q$. 
So if $\mdl{\F_\Q} \De$, then, since homomorphisms preserve positive universal clauses, also $\mdl{\Q} \De$.


\subsection{Natural Dualities}

A {\em structured topological space} is a topological space $(X,\tau)$ together with a (possibly empty) finite set of relations 
defined on $X$ and a finite (possibly empty) set of internal operations on $X$. The general theory of natural dualities  
allows partial operations; however, the operations involved in the dualities considered in this paper will all be total.

Let us fix for the rest of this section a finite algebra $\alg{A}$ and  a structured topological
 space $\spa{A}$ with the same universe $A$ such that the topology on $\spa{A}$ is the discrete topology. Let $\cls{A}$ denote 
 the category whose objects are the algebras in  $\cop{ISP}(\alg{A})$ and  whose morphisms are homomorphisms of algebras. 
 Let $\mathcal{X}$ be the class of all isomorphic copies of topologically closed substructures of direct  powers (over non-empty index sets) 
of $\spa{A}$. It is easy to see that $\mathcal{X}$ with continuous maps preserving
the relational and functional structure spaces forms a category.

The structured topological space $\spa{A}$ is said to be {\em algebraic over} $\alg{A}$ if the relations and the graphs of the 
operations in $\spa{A}$ are contained in $\cop{SP}(\alg{A})$. In this case, 
contravariant functors $\XA\colon\cls{A}\to\mathcal{X}$ and 
$\AAA\colon\mathcal{X}\to\cls{A}$ may be defined as described below. First, let
$$
\XA(\alg{B})=\cls{A}[\alg{B},\alg{A}]\leq \spa{A}^{B},
$$
where $\cls{A}[\alg{B},\alg{A}]$ is the set of homomorphisms in $\cls{A}$  from $\alg{B}$ to $\alg{A}$ 
considered as a substructure and a topological subspace of $\spa{A}^{B}$, and let 
$\XA(h\colon\alg{B}\to\alg{C})$ be the map $\XA(h)\colon\XA(\alg{C})\to\XA(\alg{B})$ defined by
$$\XA(h)(f)=f\circ h\ \mbox{ for each }f\in \XA(\alg{C}).$$
Observe that if $\alg{B}$ is finite, then $\XA(\alg{B})$ is finite and carries the discrete topology. 
Similarly, let
$$
\AAA(X)=\mathcal{X}[X,\spa{A}]\leq {\alg{A}^{X}},
$$
where $\mathcal{X}[X,\spa{A}]$ is the set of morphisms from $X$ to $\spa{A}$ considered as a subalgebra of $\alg{A}^X$,
and let $\AAA(l\colon X\to Y)$ be the map $\AAA(l)\colon\AAA(Y)\to\AAA(X)$ defined by
$$
\AAA(l)(f)=f\circ l\ \mbox{ for each }f\in \AAA(Y).
$$
The evaluation map determines a natural transformation from the identity functor in $\cls{A}$ into $\AAA\circ \XA$. 
That is, for each $\alg{B}\in\cls{A}$, the map  $e_{\alg{B}}\colon\alg{B}\to \AAA(\XA(\alg{B}))$ defined by
$$
e_{\alg{B}}(b)(f)=f(b)\ \mbox{ for each } b\in\alg{B}\mbox{ and }f\in\XA(\alg{B})
$$
is a homomorphism and the class $\{e_{\alg{B}}\}_{\alg{B}\in \cls{A}}$ is a natural transformation.

If $e_{\alg{B}}$ is an isomorphism for each $\alg{B}\in\cls{A}$, then $\XA$ and $\AAA$ determine a dual equivalence between $\cls{A}$ 
and the category $\XA(\cls{A})$. In this case we say that $\spa{A}$ {\em determines a natural duality} on $\cls{A}$. It is clear from the above 
that $\spa{A}$ encodes all the information needed to reproduce the functors that determine the equivalence. If $\mathcal{X}$ coincides 
with the category of isomorphic images of $\XA(\cls{A})$, then we say that $\spa{A}$ {\em yields a full natural duality on $\cls{A}$}.

One useful byproduct of having a natural duality for a class of algebras is the following:
\begin{theorem}[{\cite[Corollary 2.2.4]{CD98}}]\label{Th:FreeDual}
  If $\spa{A}$ yields a natural duality on $\cls{A}=\cop{ISP}(\alg{A})$, then, 
  for each cardinal $\kappa$, the algebra $\AAA(\spa{A}^{\kappa})$ is free $\kappa$-generated on ${\cls{A}}$.
\end{theorem}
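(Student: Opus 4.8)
The plan is to identify the dual of the free algebra and then transport it through the duality. Since $\cls{A}=\cop{ISP}(\alg{A})$ with $\alg{A}$ finite, the free $\kappa$-generated algebra $\F_{\cls{A}}(\kappa)$ admits the standard concrete description as the subalgebra of $\alg{A}^{A^\kappa}$ generated by the family $\{p_i : i \in \kappa\}$ of coordinate projections $p_i \colon A^\kappa \to A$, and I would take this as the point of departure, noting that the index set $A^\kappa$ is precisely the universe of $\spa{A}^\kappa$. The goal is then to prove
\[
\XA(\F_{\cls{A}}(\kappa)) \cong \spa{A}^\kappa \quad \text{in } \mathcal{X},
\]
after which applying the contravariant functor $\AAA$ and invoking the isomorphism $e_{\F_{\cls{A}}(\kappa)}$ supplied by the assumed natural duality yields
\[
\AAA(\spa{A}^\kappa) \cong \AAA(\XA(\F_{\cls{A}}(\kappa))) \cong \F_{\cls{A}}(\kappa),
\]
which is exactly the assertion that $\AAA(\spa{A}^\kappa)$ is free $\kappa$-generated on $\cls{A}$ (the free generators being the coordinate projections $\spa{A}^\kappa \to \spa{A}$).

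To establish the displayed isomorphism I would first set up the underlying bijection. By definition $\XA(\F_{\cls{A}}(\kappa)) = \cls{A}[\F_{\cls{A}}(\kappa),\alg{A}]$, and by the universal property of the free algebra a homomorphism $f \colon \F_{\cls{A}}(\kappa) \to \alg{A}$ is determined by, and may be prescribed arbitrarily on, the generators $p_i$. Hence $\Theta(f) = (f(p_i))_{i \in \kappa}$ is a bijection of $\XA(\F_{\cls{A}}(\kappa))$ onto $A^\kappa = |\spa{A}^\kappa|$; concretely, $\Theta$ simply records the coordinates of $f$ at the generators. Continuity of $\Theta$ and of $\Theta^{-1}$ then follows routinely: $\spa{A}$ carries the discrete topology, and since each $b \in \F_{\cls{A}}(\kappa)$ is a term in finitely many generators, the preimage of a subbasic open set is again determined by finitely many coordinates, hence open.

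The crux, which I expect to be the main obstacle, is to show that $\Theta$ is an isomorphism of the relational and operational structures, and this is exactly where the hypothesis that $\spa{A}$ is algebraic over $\alg{A}$ (which already underlies the very definition of the functors) enters. Both $\spa{A}^{\F_{\cls{A}}(\kappa)}$, of which $\XA(\F_{\cls{A}}(\kappa))$ is a substructure, and $\spa{A}^\kappa$ interpret the operations and relations of $\spa{A}$ coordinatewise, so $\Theta$, being a restriction to the generator-coordinates, automatically preserves the operations and preserves each relation in the forward direction. The one substantial point is that $\Theta^{-1}$ preserves the relations. So let $r$ be an $n$-ary relation of $\spa{A}$; by algebraicity $r^{\spa{A}}$ is a subalgebra of $\alg{A}^n$. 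Suppose $(f^1,\dots,f^n) \in \XA(\F_{\cls{A}}(\kappa))^n$ with $(f^1(p_i),\dots,f^n(p_i)) \in r^{\spa{A}}$ for every $i \in \kappa$; I must show $(f^1(b),\dots,f^n(b)) \in r^{\spa{A}}$ for every $b \in \F_{\cls{A}}(\kappa)$. Writing $b = t(p_{i_1},\dots,p_{i_m})$ as a term in finitely many generators and using that the $f^j$ are homomorphisms,
\[
(f^1(b),\dots,f^n(b)) = t^{\alg{A}^n}\bigl((f^1(p_{i_1}),\dots,f^n(p_{i_1})),\dots,(f^1(p_{i_m}),\dots,f^n(p_{i_m}))\bigr),
\]
and since each argument tuple lies in $r^{\spa{A}}$ and $r^{\spa{A}}$ is closed under the operations of $\alg{A}^n$, so does the value. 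This closure argument is precisely the work done by algebraicity. With the relations (and, trivially, the operations) matched and $\Theta$ a homeomorphism, $\Theta$ is an isomorphism in $\mathcal{X}$, and the chain of isomorphisms above completes the proof.
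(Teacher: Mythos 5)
Your proof is correct. Note that the paper itself gives no proof of this statement---it is imported verbatim from Clark and Davey as \cite[Corollary 2.2.4]{CD98}---so there is no in-paper argument to compare against; your reconstruction is essentially the standard proof of the cited result. Specifically, you first establish, using only the algebraicity of $\spa{A}$ over $\alg{A}$ (no duality hypothesis needed at this stage), that the dual of the free algebra is the power, $\XA(\F_{\cls{A}}(\kappa)) \cong \spa{A}^\kappa$, via the bijection $\Theta(f) = (f(p_i))_{i \in \kappa}$ between homomorphisms $\F_{\cls{A}}(\kappa) \to \alg{A}$ and points of $A^\kappa$; the duality hypothesis enters only at the final step, where the evaluation map $e_{\F_{\cls{A}}(\kappa)}$ being an isomorphism lets you transport this identification through the contravariant functor $\AAA$. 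Your handling of the one genuinely non-routine point---that $\Theta^{-1}$ reflects each relation $r$ of $\spa{A}$ because, by algebraicity, $r$ is (the universe of) a subalgebra of $\alg{A}^n$ and hence closed under term functions applied coordinatewise to the generator values---is exactly right, and the remaining verifications (bicontinuity via dependence on finitely many coordinates, preservation of operations, membership of $\F_{\cls{A}}(\kappa)$ in $\cls{A} = \cop{ISP}(\alg{A})$) are all handled or correctly dismissed as routine.
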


We remark that all the dualities considered in this paper are strong. However, rather than provide the technical definition 
of a strong duality here (see~\cite[Section~3.2]{CD98}), we instead recall two properties of strong dualities relevant 
to this paper. Namely, every strong duality is full, and embeddings (surjections)  in $\cls{A}$ correspond to surjections (embeddings) in 
$\mathcal{X}$.

For the reader unfamiliar with natural dualities, let us present the Stone duality between Boolean algebras and Stone spaces 
(zero-dimensional compact Hausdorff topological spaces) as the standard example of this approach. First, recall that the class of Boolean 
algebras $\cls{B}$ coincides with $\cop{ISP}(\alg{2})$,  where $\alg{2}$ denotes the two-element Boolean algebra. Let $\spa{2}$ be the 
$2$-element   Stone space $(\{0,1\}, \mathcal{T})$, where $\mathcal{T}$ is the discrete topology. Given a Stone space $X$, the characteristic 
map of each clopen subset is a continuous map from $X$ into $\spa{2}$. It follows that each Stone space is isomorphic to a closed subspace of 
direct  powers of $\spa{2}$, and that the functors $\XB(\alg{B})=\cls{B}[\alg{B},\alg{2}]$ and $\AAB(X)=\mathcal{X}[X,\spa{2}]$ determine a 
strong natural duality between Boolean algebras and Stone spaces. Moreover, the correspondence between $\cls{B}[\alg{B},\alg{2}]$ 
and the set of ultrafilters of $\alg{B}$ determines a natural equivalence between the functor $\XB$ and the usual presentation of Stone duality. 


\section{Admissibility}\label{s:admissibility}

In this section, we introduce the notions of admissibility, 
structural completeness, and universal completeness, and relate these to free 
algebras and the generation of varieties and positive universal classes. We then 
refine these relationships in the context of locally finite quasivarieties, obtaining 
criteria (used repeatedly in subsequent sections) for a set of  clauses or quasi-identities 
to axiomatize admissible clauses or quasi-identities relative to a quasivariety. Finally, we 
explain how natural dualities may be used to obtain these axiomatizations.


\subsection{Admissible Clauses}

Let $\Q$ be an $\lang$-quasivariety. An $\lang$-clause $\Si \imp \De$ is 
\emph{admissible in $\Q$} if for every homomorphism $\si \colon \alg{Fm_\lang} \to \alg{Fm_\lang}$:
\[
\mdl{\Q} \si(\f') \eq \si(\p') \ 	\textrm{ for all }\f' \eq \p' \in \Si  \qquad {\rm implies } \qquad \mdl{\Q} \si(\f) \eq \si(\p)\  \textrm{ for some }\f \eq \p \in \De.
\]
In particular, a negative $\lang$-clause $\Si \imp \emptyset$ is admissible in $\Q$ iff 
there is {\em no} homomorphism $\si \colon \alg{Fm_\lang} \to \alg{Fm_\lang}$ satisfying 
$\mdl{\Q} \si(\f') \eq \si(\p')$ for all $\f' \eq \p' \in \Si$.

The next result relates admissibility in $\Q$ to validity in the free algebra $\F_\Q$.

\begin{theorem} \label{t:EqAdm} 
Let $\Q$ be an $\lang$-quasivariety, let $\Si \imp \De$ be an $\lang$-clause, and let 
$\U$ be the universal class axiomatized by $\{\Si \imp \De\}$ relative to $\Q$. 
Then the following are equivalent:
\begin{itemize}
\item[{\rm (i)}]		$\Si \imp \De$ is admissible in $\Q$.
\item[{\rm (ii)}]		$\Si \mdl{\F_{\Q}} \De$.
\item[{\rm (iii)}]	$\Ugp(\Q) = \Ugp(\U)$. 
\end{itemize}
Moreover, if $|\De| = 1$, then the following is equivalent to {\rm (i)-(iii)}:
\begin{itemize}
\item[{\rm (iv)}]	$\Vg(\Q) = \Vg(\U)$.
\end{itemize}
\end{theorem}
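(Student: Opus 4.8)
The plan is to prove that (i) and (ii) are equivalent, to close the cycle via (ii)\,$\Rightarrow$\,(iii)\,$\Rightarrow$\,(i), and then to treat (iv) separately. For (i)\,$\Leftrightarrow$\,(ii) the key is a dictionary between substitutions and homomorphisms into $\F_\Q$. First I would note that for any substitution $\si\colon\alg{Fm_\lang}\to\alg{Fm_\lang}$ the composite $h_\Q\circ\si\colon\alg{Fm_\lang}\to\F_\Q$ is a homomorphism, and that conversely every homomorphism $h\colon\alg{Fm_\lang}\to\F_\Q$ has this form: since $\F_\Q$ is generated by the elements $h_\Q(x)$ for $x$ a variable, each $h(x)$ equals $h_\Q(\f_x)$ for some formula $\f_x$, and the substitution $\si$ with $\si(x)=\f_x$ satisfies $h=h_\Q\circ\si$. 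By (\ref{Eq:ValidityFree}), $\mdl{\Q}\si(\f)\eq\si(\p)$ holds iff $h_\Q(\si(\f))=h_\Q(\si(\p))$, i.e.\ iff $\f\eq\p\in\ker(h_\Q\circ\si)$. Reading the definition of admissibility through this translation turns (i) into the assertion that for every homomorphism $h\colon\alg{Fm_\lang}\to\F_\Q$, if $\Si\subseteq\ker h$ then $\De\cap\ker h\neq\emptyset$, which is exactly $\Si\mdl{\F_\Q}\De$, that is, (ii).

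Before closing the cycle I would record three facts. Since $\F_\Q\in\Q$ and, by (\ref{Eq:ValidityPos}), $\Q$ and $\F_\Q$ satisfy exactly the same positive clauses, we have $\Ugp(\Q)=\Ugp(\F_\Q)$; since $\F_\Q=\F_{\Vg(\Q)}$ generates $\Vg(\Q)$, we have $\Vg(\Q)=\Vg(\F_\Q)$; and (ii) is equivalent to $\F_\Q\in\U$, because $\F_\Q\in\Q$ and $\U$ consists precisely of the members of $\Q$ satisfying $\Si\imp\De$. For (ii)\,$\Rightarrow$\,(iii): from $\F_\Q\in\U$ we get $\Ugp(\F_\Q)\subseteq\Ugp(\U)$, while $\U\subseteq\Q$ gives $\Ugp(\U)\subseteq\Ugp(\Q)$; together with $\Ugp(\Q)=\Ugp(\F_\Q)$ these squeeze to $\Ugp(\Q)=\Ugp(\U)$.

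The step carrying the real weight is (iii)\,$\Rightarrow$\,(i). The obstacle is that a clause $\Si\imp\De$ is \emph{not} in general preserved by homomorphic images, so from $\F_\Q\in\Ugp(\U)=\cop{HSP}_U(\U)$ alone one cannot conclude that $\F_\Q$ satisfies the clause. My way around this is to fix the substitution first and thereby collapse the clause to a positive one. Given $\si$ with $\mdl{\Q}\si(\f')\eq\si(\p')$ for all $\f'\eq\p'\in\Si$, the clause $\si(\Si)\imp\si(\De)$ is a substitution instance of $\Si\imp\De$ and hence valid in $\U$; as its premises are valid in $\Q$ and therefore, since $\U\subseteq\Q$, in $\U$, the \emph{positive} clause $\emptyset\imp\si(\De)$ is valid in $\U$, hence throughout $\Ugp(\U)$. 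Now (iii) gives $\Ugp(\U)=\Ugp(\Q)\supseteq\Q$, so $\mdl{\Q}\si(\De)$, and (\ref{Eq:ValidityPos}) delivers $\mdl{\Q}\si(\f)\eq\si(\p)$ for some $\f\eq\p\in\De$, which is admissibility.

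Finally, under $|\De|=1$ I would add (ii)\,$\Leftrightarrow$\,(iv) to the picture. The implication (ii)\,$\Rightarrow$\,(iv) runs exactly as (ii)\,$\Rightarrow$\,(iii) with $\Vg$ in place of $\Ugp$, using $\F_\Q\in\U$, $\U\subseteq\Q$, and $\Vg(\Q)=\Vg(\F_\Q)$ (and in fact needs no restriction on $|\De|$). For (iv)\,$\Rightarrow$\,(i) I would rerun the argument of (iii)\,$\Rightarrow$\,(i), but now $\emptyset\imp\si(\De)$ is the single identity $\si(\f)\eq\si(\p)$, so its validity in $\U$ is validity in $\Vg(\U)$, which by (iv) equals $\Vg(\Q)\supseteq\Q$, giving $\mdl{\Q}\si(\f)\eq\si(\p)$. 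This is precisely where $|\De|=1$ is indispensable: for $|\De|>1$ the reduced conclusion is a genuine disjunction, a positive clause governed by $\Ugp$ rather than an identity governed by $\Vg$, which explains why (iv) is absent from the general equivalence.
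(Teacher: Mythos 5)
Your proof is correct, and on the core cycle (i)$\Leftrightarrow$(ii)$\Rightarrow$(iii)$\Rightarrow$(i) it is essentially the paper's own argument: the same substitution--homomorphism dictionary (every homomorphism $\alg{Fm_\lang}\to\F_\Q$ factors as $h_\Q\circ\si$ because $h_\Q$ is onto), the same squeeze via (\ref{Eq:ValidityPos}) and $\F_\Q\in\U$ for (ii)$\Rightarrow$(iii), and the same fix-the-substitution-first reduction of the clause to a positive clause in (iii)$\Rightarrow$(i). The one place you genuinely diverge is (iv). The paper disposes of it in a single line: when $|\De|=1$, $\U$ is a quasivariety, and since quasivarieties are closed under $\cop{P}$ and $\cop{P}_U$, both $\Ugp(\U)=\cop{HSP}_U(\U)$ and $\Vg(\U)=\cop{HSP}(\U)$ collapse to $\cop{HS}(\U)$ (and likewise for $\Q$), so under that hypothesis (iii) and (iv) are literally the same statement. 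You instead prove (ii)$\Rightarrow$(iv) unconditionally (the same squeeze with $\Vg$ in place of $\Ugp$, using $\Vg(\Q)=\Vg(\F_\Q)$) and (iv)$\Rightarrow$(i) by rerunning the substitution argument, exploiting that a single identity---unlike a proper disjunction---is preserved throughout the generated variety. Both routes are sound; the paper's is slicker but leans on the operator identities for quasivarieties, whereas yours is more self-contained and has the expository advantage of isolating exactly where $|\De|=1$ is indispensable: only in (iv)$\Rightarrow$(i), since (ii)$\Rightarrow$(iv) holds for arbitrary $\De$.
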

\begin{proof}
(i) $\Rightarrow$ (ii). Suppose that $\Si \imp \Delta$ is admissible in $\Q$. Consider any homomorphism 
$g \colon \alg{Fm_\lang} \to \F_{\Q}$ satisfying $\Si \subseteq \ker g$, noting that if no such 
homomorphism exists, then trivially $\Si \mdl{\F_{\Q}} \De$. 
Let us fix $f$ to be a map sending each variable $x$ to a formula $\f$ such that $h_\Q (\f) = g(x)$, and let $\si$ be 
the unique homomorphism $\si \colon \alg{Fm_\lang} \to \alg{Fm_\lang}$ extending $f$. 
Then $h_\Q \circ \si = g$ and $\Si \subseteq \ker(h_\Q \circ \si)$. So for  $\f' \eq \p' \in \Si$, also $h_\Q(\si(\f')) =  h_\Q(\si(\p'))$ 
and, by (\ref{Eq:ValidityFree}) from Section~\ref{s:preliminaries}, $\mdl{\Q} \si(\f') \eq \si(\p')$. 
Hence,  by assumption, $\mdl{\Q} \si(\f) \eq \si(\p)$ for some $\f\eq \p\in\Delta$. Again by (\ref{Eq:ValidityFree}), 
$g(\f) = h_\Q(\si(\f)) = h_\Q(\si(\p)) = g(\p)$ as required.
 
(ii) $\Rightarrow$ (iii). Suppose that $\Si \mdl{\F_{\Q}} \De$. So $\F_\Q \in \U$ and $\Ugp(\F_\Q) \subseteq \Ugp(\U)$. 
Clearly $\Ugp(\U) \subseteq \Ugp(\Q)$, since $\U \subseteq \Q$.  Using (\ref{Eq:ValidityPos}) from Section~\ref{s:preliminaries}, 
$\Ugp(\Q) = \Ugp(\F_\Q)$. Hence $\Ugp(\Q) = \Ugp(\U)$.

 (iii) $\Rightarrow$ (i). Suppose that $\Ugp(\Q) = \Ugp(\U)$ and let $\si \colon \alg{Fm_\lang} \to \alg{Fm_\lang}$ be a homomorphism
 such that  $\mdl{\Q} \si(\f') \eq \si(\p')$ for all $\f' \eq \p' \in \Si$. Because $\Si \mdl{\U} \De$, also $\si(\Si) \mdl{\U} \si(\De)$. Hence, 
 since $\U \subseteq \Q$, it follows that $\mdl{\U} \si(\De)$. But $\Ugp(\Q)= \Ugp(\U)$, so $\mdl{\Q} \si(\De)$. 
By (\ref{Eq:ValidityPos}) from Section~\ref{s:preliminaries},  $\mdl{\Q} \si(\f) \eq \si(\p)$ for some $\f \eq \p \in \De$ as required.
 
 Note finally for (iv) that if $|\De| =1$, then $\U$ is a quasivariety and (iii) is equivalent to  
 $\Vg(\Q) = \Ugp(\U) = \Vg(\U)$. \qed
\end{proof}

\begin{example} \label{e:lattices}
The following lattice clauses  (Whitman's condition, 
meet semi-distributivity, and join semi-distributivity, respectively) 
are satisfied by all free lattices (see~\cite{Whi41,Jon61}):
\[
\begin{array}{rcl}
x \wedge y \leqn z \vee w & \imp & x \wedge y \leqn z, \ x \wedge y \leqn w, \ x  \leqn z \vee w, \ y \leqn z \vee w\\[.05in]
x \wedge y \eq x \wedge z & \imp & x \wedge y \eq x \wedge (y \vee z)\\[.05in]
x \vee y \eq x \vee z & \imp & x \vee y \eq x \vee (y \wedge z).
\end{array}
\]
Hence these clauses are admissible in the variety of lattices.
\end{example}

If $\Qg(\F_\Q)$ is axiomatized by a set of quasi-identities $\Lambda$ relative to $\Q$,  then we call  $\Lambda$ a {\em basis for 
the admissible quasi-identities of $\Q$}. Similarly, if $\Ug(\F_\Q)$  is axiomatized by a set of clauses $\Lambda$ relative to  $\Q$,  
then we call  $\Lambda$  a {\em basis for the admissible clauses of  $\Q$}.

\begin{example} \label{e:groups}
The following quasi-identities  define the quasivariety of torsion-free groups relative to groups (in a language with 
$\cdot$, $^{-1}$, and ${\rm e}$) and are satisfied by  all free groups:
\[
\underbrace{x \cdot \ldots \cdot x}_n \eq {\rm e} \quad \imp \quad x \eq {\rm e} \qquad n =2,3,\ldots.
\]
Hence these quasi-identities are admissible in the variety of groups. In fact, 
by the fundamental theorem of finitely generated abelian groups, every finitely generated 
torsion-free abelian group 
is isomorphic to a free abelian group $\alg{Z}^n$ for some $n \in \mathbb{N}$ (see, e.g.,~\cite{Fuc60}). 
These quasi-identities therefore provide a basis for the admissible clauses (and, similarly, the admissible quasi-identities) of the 
variety of abelian groups.
\end{example}

Interest in admissible rules was first stimulated largely by the phenomenon of admissible non-derivable rules in 
intuitionistic propositional logic. In particular, it was shown by Rybakov in~\cite{Ryb84} (see also the monograph~\cite{Ryb97}) that 
the set of admissible rules of this logic, corresponding to admissible quasi-identities of the variety of Heyting algebras, 
is decidable but has no finite basis. Iemhoff~\cite{Iem01} and Rozi{\`e}re~\cite{Roz92} later proved, independently, that an infinite 
basis is formed by the family of  ``Visser rules'', expressed algebraically as the quasi-identities
\[
\top \eq (\bigwedge_{i=1}^n (y_i \to x_i) \to (y_{n+1} \vee y_{n+2})) \vee z \quad \imp\quad
\top \eq \bigvee_{j=1}^{n+2} ( \bigwedge_{i=1}^n (y_i \to x_i) \to y_j) \vee z \quad  n = 2,3,\ldots.
\]
A basis for the admissible clauses of Heyting algebras is provided by the 
Visser rules together with clauses expressing, respectively, the absence of trivial algebras and the 
disjunction property:
\[
\top \eq \bot \ \imp \ \emptyset  \qquad \mbox{and} \qquad \top \eq x \vee y \ \ \imp \ \ \top \eq x, \ \top \eq y.
\]
The proof of Iemhoff (also her work on intermediate logics in~\cite{Iem05}) makes use of a characterization of 
finitely generated projective 
Heyting algebras  (retracts of finitely generated free Heyting algebras) obtained by Ghilardi~\cite{Ghi99} in the dual setting of 
Kripke frames. Similarly, 
Ghilardi's Kripke frame characterizations  of finitely generated projective algebras for various transitive modal logics~\cite{Ghi00}
were used by Je{\v r}{\'a}bek~\cite{Jer05} to obtain bases of admissible rules for these logics.  
In this paper, we will see that dual characterizations of finitely generated projective algebras can also 
play a central role in characterizing admissibility for quasivarieties generated by 
finite algebras.


\subsection{Structural Completeness and Universal Completeness}

A quasivariety $\Q$ is  \emph{structurally complete} if a quasi-identity is admissible in $\Q$ exactly 
when it is valid in $\Q$: that is, $\Si \imp \f \eq \p$ is admissible in 
$\Q$ iff $\Si \mdl{\Q} \f \eq \p$. Equivalent characterizations 
(first proved by Bergman in~\cite{Ber91}) are obtained using Theorem~\ref{t:EqAdm} 
as follows.

\begin{proposition}[{\cite[Proposition 2.3]{Ber91}}]\label{p:Bergman}
The following are equivalent for any $\lang$-quasivariety $\Q$:
\begin{itemize}
\item[{\rm (i)}] $\Q$ is structurally complete.
\item[{\rm (ii)}] $\Q = \Qg(\F_\Q)$.
\item[{\rm (iii)}] For each quasivariety $\Q' \subseteq \Q$: \ $\Vg(\Q') = \Vg(\Q)$ implies $\Q' = \Q$.
\end{itemize}
\end{proposition}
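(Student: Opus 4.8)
The plan is to prove the three-way equivalence by showing
(i) $\Leftrightarrow$ (ii) directly from the definition of structural completeness together with Theorem~\ref{t:EqAdm}, and then (ii) $\Leftrightarrow$ (iii) using the characterization of admissibility in terms of generated varieties supplied by clause (iv) of that theorem.

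For (i) $\Leftrightarrow$ (ii), I would argue as follows. The quasivariety $\Qg(\F_\Q)$ is by definition axiomatized (relative to, say, all $\lang$-algebras, but in any case as a subclass of $\Q$ since $\F_\Q \in \Q$) by exactly those quasi-identities valid in $\F_\Q$. By Theorem~\ref{t:EqAdm}, the equivalence of (i) and (ii) there, a quasi-identity $\Si \imp \f \eq \p$ is admissible in $\Q$ iff $\Si \mdl{\F_\Q} \f \eq \p$, i.e.\ iff it is valid in $\F_\Q$, i.e.\ iff it holds in $\Qg(\F_\Q)$. Thus $\Q$ is structurally complete (every admissible quasi-identity is already valid in $\Q$) iff every quasi-identity valid in $\F_\Q$ is valid in $\Q$. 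Since $\F_\Q \in \Q$ always gives $\Q \subseteq \Qg(\F_\Q)$ (every quasi-identity valid in $\Q$ is valid in $\F_\Q$, so $\Qg(\F_\Q) \subseteq \Q$ in terms of classes, and conversely), the condition that the two sets of valid quasi-identities coincide is precisely $\Q = \Qg(\F_\Q)$. I would be careful here about the direction of containment: $\F_\Q \in \Q$ forces $\Qg(\F_\Q) \subseteq \Q$ as classes, so the content of (ii) is the reverse inclusion $\Q \subseteq \Qg(\F_\Q)$, which is exactly the assertion that validity in $\F_\Q$ implies validity in $\Q$.

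For (ii) $\Leftrightarrow$ (iii), the key tool is clause (iv) of Theorem~\ref{t:EqAdm}, which for a quasi-identity ($|\De| = 1$) equates admissibility with $\Vg(\Q) = \Vg(\U)$, where $\U = \Qg(\F_\Q)$ in the structurally complete case. For (ii) $\Rightarrow$ (iii), suppose $\Q = \Qg(\F_\Q)$ and let $\Q' \subseteq \Q$ be a quasivariety with $\Vg(\Q') = \Vg(\Q)$. Then $\F_{\Q'} = \F_{\Vg(\Q')} = \F_{\Vg(\Q)} = \F_\Q$ using the fact recalled in the preliminaries that $\F_{\Vg(\Q)} = \F_\Q$. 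Hence $\Q' \supseteq \Qg(\F_{\Q'}) = \Qg(\F_\Q) = \Q$, giving $\Q' = \Q$. For (iii) $\Rightarrow$ (ii), I would take $\Q' = \Qg(\F_\Q)$, which is a quasivariety contained in $\Q$; it suffices to check $\Vg(\Q') = \Vg(\Q)$, whence (iii) yields $\Q' = \Q$, i.e.\ (ii). This variety equality follows again from $\F_{\Q'} = \F_\Q$ together with the fact that a variety is determined by its free $\omega$-generated algebra, or more simply by observing that $\F_\Q \in \Q' \subseteq \Q$ forces the three generated varieties $\Vg(\F_\Q) \subseteq \Vg(\Q') \subseteq \Vg(\Q)$ to coincide since $\Vg(\Q) = \Vg(\F_\Q)$.

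I expect the main obstacle to be bookkeeping the direction of inclusions and the translation between ``classes'' and ``sets of valid quasi-identities'': one must consistently track that $\F_\Q \in \Q$ gives the easy inclusion for free and that all the real content lies in the reverse direction, and one must invoke $\Vg(\Q) = \Vg(\F_\Q)$ at exactly the right moments. None of the steps requires genuinely new ideas beyond Theorem~\ref{t:EqAdm} and the free-algebra facts from the preliminaries, so this is essentially an exercise in assembling those results correctly rather than in discovering new structure.
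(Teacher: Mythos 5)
Your proposal is correct and follows essentially the same route as the paper's proof: (i) $\Leftrightarrow$ (ii) read off from Theorem~\ref{t:EqAdm}, (ii) $\Rightarrow$ (iii) via $\F_{\Q'} = \F_{\Vg(\Q')} = \F_{\Vg(\Q)} = \F_\Q$ and hence $\Q = \Qg(\F_{\Q'}) \subseteq \Q'$, and (iii) $\Rightarrow$ (ii) by applying (iii) to $\Q' = \Qg(\F_\Q)$. The only blemish is the early sentence claiming that $\F_\Q \in \Q$ ``always gives $\Q \subseteq \Qg(\F_\Q)$'', which states the containment backwards, but you explicitly correct this yourself (the automatic inclusion is $\Qg(\F_\Q) \subseteq \Q$, and $\Q \subseteq \Qg(\F_\Q)$ is the content of structural completeness), so the argument stands as written.
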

\begin{proof}
(i) $\Leftrightarrow$ (ii). Immediate from Theorem~\ref{t:EqAdm}. 

(ii) $\Rightarrow$ (iii). Suppose that   $\Q = \Qg(\F_\Q)$ and let $\Q' \subseteq \Q$ be a quasivariety satisfying 
$\Vg(\Q') = \Vg(\Q)$. Then $\F_{\Q'} = \F_{\Vg(\Q')} =  \F_{\Vg(\Q)} = \F_{\Q}$, so 
 $\Q = \Qg(\F_\Q) = \Qg(\F_{\Q'}) \subseteq \Q'$.

(iii) $\Rightarrow$ (ii).  Clearly $\Qg(\F_\Q) \subseteq \Q$ and $\Vg(\Qg(\F_\Q)) = \Vg(\Q)$. So, assuming 
(iii), $\Q = \Qg(\F_\Q)$. \qed
\end{proof}

A quasivariety $\Q$ is {\em universally complete} if a clause is admissible in $\Q$ exactly when it 
is valid in $\Q$: that is, $\Si \imp \De$ is admissible in 
$\Q$ iff $\Si \mdl{\Q} \De$. We obtain the following characterization.

\begin{proposition}\label{p:universalchar}
The following are equivalent for any $\lang$-quasivariety $\Q$:
\begin{itemize}
\item[{\rm (i)}] $\Q$ is universally complete.
\item[{\rm (ii)}] $\Q = \Ug(\F_\Q)$.
\item[{\rm (iii)}] For each universal class $\U \subseteq \Q$: \ $\Ugp(\U) = \Ugp(\Q)$ implies $\U = \Q$.
\end{itemize}
\end{proposition}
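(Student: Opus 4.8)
The plan is to mirror the proof of Proposition~\ref{p:Bergman}, replacing quasi-identities by arbitrary clauses, quasivarieties by universal classes, and the variety operator $\Vg$ by the positive universal operator $\Ugp$. As there, the equivalence (i) $\Leftrightarrow$ (ii) should fall out immediately from Theorem~\ref{t:EqAdm}: universal completeness asserts that for every clause $\Si \imp \De$ we have $\Si \mdl{\Q} \De$ iff $\Si \imp \De$ is admissible in $\Q$, and by Theorem~\ref{t:EqAdm} the latter is equivalent to $\Si \mdl{\F_\Q} \De$. Since $\F_\Q \in \Q$, validity in $\Q$ always implies validity in $\F_\Q$, so universal completeness reduces to the single implication ``$\Si \mdl{\F_\Q} \De$ implies $\Si \mdl{\Q} \De$ for all clauses''. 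This implication says exactly that every clause valid in $\F_\Q$ (equivalently, in $\Ug(\F_\Q)$) is valid in $\Q$, i.e.\ $\Q \subseteq \Ug(\F_\Q)$; together with the inclusion $\Ug(\F_\Q) \subseteq \Q$, which always holds because $\F_\Q \in \Q$ and $\Q$ is a universal class, this is precisely $\Q = \Ug(\F_\Q)$.

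For (iii) $\Rightarrow$ (ii) I would simply take $\U := \Ug(\F_\Q)$, which is a universal class contained in $\Q$. Using (\ref{Eq:ValidityPos}) from Section~\ref{s:preliminaries} to get $\Ugp(\Q) = \Ugp(\F_\Q)$, together with the fact that $\Ugp(\Ug(\F_\Q)) = \Ugp(\F_\Q)$, one checks $\Ugp(\U) = \Ugp(\Q)$, so (iii) forces $\U = \Q$, that is, $\Ug(\F_\Q) = \Q$.

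The substantive direction is (ii) $\Rightarrow$ (iii), and this is where I expect the main work. Assume $\Q = \Ug(\F_\Q)$ and let $\U \subseteq \Q$ be a universal class with $\Ugp(\U) = \Ugp(\Q)$; since $\U \subseteq \Q$ is given, it suffices to prove $\Q \subseteq \U$, equivalently that every clause $\Si \imp \De$ valid in $\U$ is valid in $\Q$. By Theorem~\ref{t:EqAdm} together with (ii), it is enough to show that each such clause is admissible in $\Q$. The key computation is as follows: given a substitution $\si$ with $\mdl{\Q} \si(\f') \eq \si(\p')$ for all $\f' \eq \p' \in \Si$, I would first transfer the premises into $\U$ (using $\U \subseteq \Q$) and apply the substitution instance $\si(\Si) \mdl{\U} \si(\De)$ of the clause valid in $\U$ to obtain $\mdl{\U} \si(\De)$; then, since $\mdl{\U} \si(\De)$ is the validity of a positive clause and positive clauses are preserved by $\cop{H}$, $\cop{S}$, and $\cop{P}_U$, it passes to $\Ugp(\U) = \Ugp(\Q) \supseteq \Q$, yielding $\mdl{\Q} \si(\De)$, i.e.\ $\mdl{\Q} \si(\f) \eq \si(\p)$ for some $\f \eq \p \in \De$ by (\ref{Eq:ValidityPos}). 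This establishes admissibility.

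The delicate point, and the expected obstacle, is keeping the asymmetry between hypotheses and conclusion straight: the premises of the clause are handled by the inclusion $\U \subseteq \Q$, whereas the conclusion, being positive, must be routed back from $\U$ to $\Q$ through the equality $\Ugp(\U) = \Ugp(\Q)$ and the preservation of positive clauses under homomorphic images. Verifying that $\mdl{\U} \si(\De)$ is genuinely the validity of a positive clause (so that this transfer is legitimate) and that substitution preserves clause validity in $\U$ are the routine lemmas underpinning the argument.
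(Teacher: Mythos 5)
Your proposal is correct; every step checks out. It differs from the paper's proof mainly in its decomposition and in how much of Theorem~\ref{t:EqAdm} it reuses. The paper closes the cycle as (i) $\Leftrightarrow$ (ii), (i) $\Rightarrow$ (iii), (iii) $\Rightarrow$ (i), and both non-trivial legs run through Theorem~\ref{t:EqAdm} applied to an auxiliary universal class axiomatized by a \emph{single clause} relative to $\Q$: for (i) $\Rightarrow$ (iii) it takes a clause $\Si \imp \De$ valid in $\U$, introduces the class $\U'$ axiomatized by $\{\Si \imp \De\}$ relative to $\Q$, and obtains admissibility from the sandwich $\Ugp(\Q) = \Ugp(\U) \subseteq \Ugp(\U') \subseteq \Ugp(\Q)$; for (iii) $\Rightarrow$ (i) it applies (iii) to the class axiomatized by an admissible clause. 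You instead prove (iii) $\Rightarrow$ (ii) and (ii) $\Rightarrow$ (iii). Your (iii) $\Rightarrow$ (ii) instantiates (iii) exactly once, with the canonical class $\U := \Ug(\F_\Q)$, and needs only (\ref{Eq:ValidityPos}) and the identity $\Ugp(\Ug(\F_\Q)) = \Ugp(\F_\Q)$; this is slightly more economical than the paper's per-clause construction and does not invoke Theorem~\ref{t:EqAdm} at all. Conversely, your (ii) $\Rightarrow$ (iii) re-derives inline the substitution-and-transfer argument --- premises pass down to $\U$ via $\U \subseteq \Q$, the positive conclusion $\mdl{\U} \si(\De)$ passes back up to $\Q$ through $\Ugp(\U) = \Ugp(\Q)$ --- which is precisely the content of the proof of (iii) $\Rightarrow$ (i) in Theorem~\ref{t:EqAdm}; the paper's $\U'$-sandwich exists exactly to avoid this duplication and quote the theorem as a black box. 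In short, the paper's route is shorter on the page because it maximally reuses Theorem~\ref{t:EqAdm}, while yours is more self-contained and makes explicit where the asymmetry lies (premises handled by the inclusion $\U \subseteq \Q$, positive conclusions by the $\Ugp$-equality), at the cost of repeating an argument already available.
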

\begin{proof}
(i) $\Leftrightarrow$ (ii). Immediate from Theorem~\ref{t:EqAdm}. 

 (i) $\Rightarrow$ (iii). Suppose that $\Q$ is universally complete and let  $\U \subseteq \Q$ be a universal class with $\Ugp(\U) = \Ugp(\Q)$. 
 Consider an $\lang$-clause $\Si \imp \De$ such that $\Si \mdl{\U} \De$ and let $\U'$ be the universal class axiomatized relative to $\Q$ 
 by $\{\Si \imp \De\}$. Then $\Ugp(\Q) = \Ugp(\U) \subseteq \Ugp(\U') \subseteq \Ugp(\Q)$.  
By Theorem~\ref{t:EqAdm}, $\Si \imp \De$ is admissible in $\Q$. As $\Q$ is universally complete, $\Si \mdl{\Q} \De$. 
So $\Si \mdl{\U} \De$ implies $\Si \mdl{\Q} \De$, and hence $\Q \subseteq \U$.

(iii) $\Rightarrow$ (i). Suppose that  $\Si \imp \De$ is admissible in $\Q$ and let 
$\U$ be the universal class axiomatized by $\{\Si \imp \De\}$ relative to $\Q$. 
By Theorem~\ref{t:EqAdm}, $\Ugp(\Q) = \Ugp(\U)$. So, assuming (iii), it follows that 
$\U = \Q$ and hence $\Si \mdl{\Q} \De$.
\end{proof}

\begin{example}\label{Ex:StCompBDL}
The variety of lattice-ordered abelian groups is, like 
every quasivariety, generated as a universal class by its finitely presented members. 
But every finitely presented lattice-ordered abelian group is a retract  
of a free finitely generated lattice-ordered abelian group (that is, a finitely generated projective lattice-ordered abelian group)~\cite{Be77}. 
Hence every finitely presented lattice-ordered abelian group embeds into the free  lattice-ordered abelian group on 
countably infinitely many generators. So by Proposition~\ref{p:universalchar}, this variety is universally complete. 
\end{example}

Certain classes of algebras are very close to being universally complete; in such classes, the lack of harmony between admissibility and 
validity is due entirely to trivial algebras, or, from a syntactic perspective, to non-negative clauses.  
An $\lang$-quasivariety $\Q$ is  {\em non-negative universally complete} if a non-negative $\lang$-clause is 
admissible in $\Q$ exactly when it is valid in $\Q$: that is, $\Si \imp \{\f \eq \p\} \cup \De$ is admissible in 
$\Q$ iff $\Si \mdl{\Q} \{\f \eq \p\} \cup \De$. We obtain the following characterization:

\begin{proposition}\label{p:suniversalchar}
The following are equivalent for any $\lang$-quasivariety $\Q$:
\begin{itemize}
\item[{\rm (i)}] $\Q$ is non-negative universally complete.
\item[{\rm (ii)}] Every $\lang$-clause admissible  in $\Q$ is satisfied by all non-trivial members of $\Q$. 
\item[{\rm (iii)}] Every non-trivial algebra of $\Q$ is in  $\Ug(\F_\Q)$.
\end{itemize}
\end{proposition}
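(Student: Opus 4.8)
The plan is to prove the cycle of implications (i) $\Rightarrow$ (ii) $\Rightarrow$ (iii) $\Rightarrow$ (i), in the spirit of Proposition~\ref{p:universalchar} but paying close attention to the distinct roles played by trivial algebras and by negative clauses. The engine throughout will be Theorem~\ref{t:EqAdm}, which identifies the clauses admissible in $\Q$ with the clauses valid in $\F_\Q$, together with the standard observation that $\Ug(\F_\Q) = \cop{ISP}_U(\F_\Q)$ coincides with the class of all $\lang$-algebras satisfying every clause valid in $\F_\Q$ --- equivalently, every admissible clause of $\Q$. (The clauses valid in $\F_\Q$ axiomatize a universal class containing $\F_\Q$, while any universal class containing $\F_\Q$ is axiomatized by clauses all valid in $\F_\Q$; hence this is the least such universal class.)

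Given this observation, (ii) $\Rightarrow$ (iii) is immediate: for a non-trivial $\alg{A} \in \Q$, membership $\alg{A} \in \Ug(\F_\Q)$ means exactly that $\alg{A}$ satisfies every admissible clause of $\Q$, which is what (ii) supplies. For (iii) $\Rightarrow$ (i), I would take a non-negative admissible clause $\Si \imp \De$ (so $\De \neq \emptyset$); by Theorem~\ref{t:EqAdm} it is valid in $\F_\Q$. Each non-trivial $\alg{A} \in \Q$ lies in $\Ug(\F_\Q)$ by (iii) and hence satisfies every clause valid in $\F_\Q$, in particular $\Si \imp \De$; and each trivial algebra satisfies $\Si \imp \De$ outright, since $\De \neq \emptyset$ and every identity holds in a one-element algebra. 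Thus $\Si \mdl{\Q} \De$, so admissibility implies validity for non-negative clauses (the reverse implication being automatic from $\F_\Q \in \Q$), which is precisely non-negative universal completeness.

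The hard part will be (i) $\Rightarrow$ (ii), since (ii) ranges over all admissible clauses --- including negative ones $\Si \imp \emptyset$ --- whereas (i) constrains only non-negative clauses. For a non-negative admissible clause, (i) already yields validity in $\Q$ and hence satisfaction by every non-trivial member, so the real work is the negative case. Here I would use a fresh-variable device: if $\Si \imp \emptyset$ is admissible then, by definition, no substitution makes all premises in $\Si$ valid in $\Q$, so the non-negative clause $\Si \imp \{x \eq y\}$, with $x,y$ variables not occurring in $\Si$, is vacuously admissible as well. Applying (i) to this clause gives $\Si \mdl{\Q} x \eq y$. Now suppose some non-trivial $\alg{A} \in \Q$ admitted a homomorphism $h \colon \alg{Fm_\lang} \to \alg{A}$ with $\Si \subseteq \ker h$; since $x,y$ do not occur in $\Si$, we could alter $h$ on these two generators so as to send $x$ and $y$ to distinct elements of $\alg{A}$ without disturbing $\Si \subseteq \ker h$, contradicting $\Si \mdl{\Q} x \eq y$. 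Hence no such $h$ exists and $\alg{A}$ satisfies $\Si \imp \emptyset$. This fresh-variable argument for negative clauses, and the bookkeeping of trivial algebras in (iii) $\Rightarrow$ (i), are the only non-routine points; everything else follows directly from the definitions and Theorem~\ref{t:EqAdm}.
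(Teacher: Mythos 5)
Your proof is correct and follows essentially the same route as the paper's: the fresh-variable device reducing a negative admissible clause $\Si \imp \emptyset$ to the non-negative clause $\Si \imp x \eq y$ in (i) $\Rightarrow$ (ii) is exactly the paper's argument, and your identification of $\Ug(\F_\Q)$ with the class of algebras satisfying all admissible clauses (via Theorem~\ref{t:EqAdm}) is precisely how the paper gets (ii) $\Leftrightarrow$ (iii). The only difference is organizational --- you close the cycle (i) $\Rightarrow$ (ii) $\Rightarrow$ (iii) $\Rightarrow$ (i), whereas the paper proves (i) $\Leftrightarrow$ (ii) and (ii) $\Leftrightarrow$ (iii) separately --- which does not change the mathematical content.
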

\begin{proof}
 (i) $\Rightarrow$ (ii). 
 Suppose that $\Q$ is non-negative universally complete and 
 let $\Si \imp \De$ be an $\lang$-clause admissible in $\Q$. If $\De \neq \emptyset$, then, by assumption, $\Si  \mdl{\Q} \De$.  
Suppose now that $\De = \emptyset$ and let $x,y$ be variables not occurring in $\Si$. 
Then $\Si \imp x \eq y$ is admissible in $\Q$. Again, by assumption, 
$\Si \mdl{\Q} x \eq y$.  If $\alg{A}\in\Q$ is non-trivial, then $\Si \mdl{\alg{A}} x\eq y$   iff 
$\Si \mdl{\alg{A}} \emptyset$. 
So $\Si \imp \emptyset$ is satisfied by all non-trivial members of $\Q$.

 (ii) $\Rightarrow$ (i). Follows immediately since any non-negative $\lang$-clause is satisfied by all trivial $\lang$-algebras.
 
 (ii) $\Leftrightarrow$ (iii). By Theorem~\ref{t:EqAdm}, an $\lang$-clause is admissible in $\Q$  iff it 
is valid in $\Ug(\F_\Q)$, 
 so (ii) is equivalent to the statement that $\Ug(\F_\Q)$ contains all non-trivial members of $\Q$, i.e., (iii). 
\end{proof}

\noindent
Any non-negative universally complete $\lang$-quasivariety $\Q$ is  structurally complete. Moreover, if  
 $\Q$ is not universally complete, then by Proposition~\ref{p:suniversalchar},  {\em any} negative $\lang$-clause 
 not satisfied by trivial $\lang$-algebras provides a basis for the admissible $\lang$-clauses of $\Q$.

\begin{example}\label{e:Bool}
Each non-trivial Boolean algebra, but no trivial Boolean algebra, embeds into a free Boolean algebra. Hence the
variety of Boolean algebras is non-negative universally complete, but not universally complete, and 
$\bot \eq \top \ \imp \ \emptyset$ forms a basis for its admissible clauses. The same holds for Boolean lattices 
(without $\bot$ and $\top$ 
in the language), but in this case, a suitable basis is formed by $x \land \lnot x \eq x \vee \lnot x \ \imp\ \emptyset$. 
In Section~\ref{s:distributive}, we will see that  bounded distributive lattices provide an example of a variety that 
is structurally complete, but neither universally complete nor non-negative universally complete. 
\end{example}


\subsection{Local Finiteness}

The quasivarieties considered in subsequent sections of this paper are all {\em locally finite}: that is, their finitely generated members are finite. 
This property has important consequences for characterizations of the universal classes and quasivarieties generated by free algebras, 
which are in turn crucial for our study of admissible clauses and quasi-identities in these classes. In particular, we will make frequent 
use of the following result:

\begin{lemma}\label{l:localfiniteembeddings}
For any locally finite $\lang$-quasivariety $\Q$:
\begin{itemize}
\item[{\rm (a)}]	$\alg{A} \in \Ug(\F_\Q)$ iff each finite subalgebra $\alg{B}$ of $\alg{A}$ is in $\cop{IS}(\F_\Q)$. 	
\item[{\rm (b)}]	$\alg{A} \in \Qg(\F_\Q)$ iff each finite subalgebra $\alg{B}$ of $\alg{A}$ is in $\cop{ISP(\F_\Q)}$.		
\end{itemize}
Moreover, if $\Q = \Qg(\K)$ for some finite class $\K$ of finite $\lang$-algebras, then
\begin{itemize}
\item[{\rm (c)}]		$\Qg(\F_\Q) = \cop{ISP}(\F_\Q)$.	
\end{itemize}
\end{lemma}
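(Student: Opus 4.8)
The plan is to reduce all three parts to statements about \emph{finite} algebras, using two structural features. First, $\Ug(\F_\Q)$ and $\Qg(\F_\Q)$ are universal classes, so each is closed under $\cop{S}$ and axiomatised by sentences involving finitely many variables; consequently an algebra lies in one of them iff every one of its finitely generated subalgebras does. Second, since $\Q$ is locally finite, the finitely generated subalgebras of any $\alg{A}\in\Q$ are exactly its finite subalgebras. I therefore work with $\alg{A}\in\Q$ (the ambient quasivariety): this is what is needed for the right-to-left directions, as an algebra outside $\Q$ could satisfy the right-hand conditions vacuously. Granting this, (a) reduces to characterising the finite members of $\Ug(\F_\Q)=\cop{ISP}_U(\F_\Q)$ and (b) to characterising the finite members of $\Qg(\F_\Q)=\cop{ISPP}_U(\F_\Q)$.

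For the finite case of (a) I would invoke the theorem of {\L}o\'{s}. A finite algebra $\alg{B}$ embeds into a given algebra exactly when the latter satisfies the existential sentence whose matrix is the diagram of $\alg{B}$ (finite, as both $\lang$ and $\alg{B}$ are finite). Such an existential sentence holds in an ultrapower of $\F_\Q$ iff it holds in $\F_\Q$, so a finite $\alg{B}\in\cop{ISP}_U(\F_\Q)$ already lies in $\cop{IS}(\F_\Q)$; the converse is trivial. For the finite case of (b), given $\alg{B}\in\cop{ISPP}_U(\F_\Q)$ and $b\neq b'$, a suitable projection yields a homomorphism $g$ from $\alg{B}$ into a single ultrapower $\F_\Q^{K}/U$ with $g(b)\neq g(b')$. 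Choosing representatives and intersecting the finitely many $U$-large conditions asserting that $g$ respects each basic operation on the finitely many elements of $\alg{B}$, together with the ($U$-large) condition separating $b$ from $b'$, produces a coordinate $\kappa\in K$ at which $b''\mapsto g(b'')_\kappa$ is a homomorphism $\alg{B}\to\F_\Q$ separating $b$ and $b'$. Hence homomorphisms into $\F_\Q$ separate the points of $\alg{B}$, i.e.\ $\alg{B}\in\cop{ISP}(\F_\Q)$.

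The substantive part is (c), which amounts to $\cop{P}_U(\F_\Q)\subseteq\cop{ISP}(\F_\Q)$: every ultrapower $\F_\Q^{K}/U$ must embed into a power of $\F_\Q$, so that $\cop{ISP}(\F_\Q)$ is closed under ultrapowers and therefore coincides with $\Qg(\F_\Q)$. Writing $\Q=\cop{ISP}(\K)$ and setting $m=\max\{|\alg{K}|:\alg{K}\in\K\}$, the key observation is a \emph{uniform bound on rank}: any homomorphism from $\F_\Q$ into a member $\alg{K}$ of $\K$ sends the free generators to at most $|\alg{K}|\leq m$ distinct values, so it factors as a relabelling $\F_\Q\to\F_\Q(m)$ of generators followed by a map $\F_\Q(m)\to\alg{K}$, where $\F_\Q(m)\leq\F_\Q$ is \emph{finite} by local finiteness. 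Now take $u=[\tau]\neq u'=[\tau']$ in $\F_\Q^{K}/U$, so $D=\{k:\tau(k)\neq\tau'(k)\}\in U$. For each $k\in D$, since $\F_\Q\in\cop{ISP}(\K)$ there is a homomorphism from $\F_\Q$ to a member of $\K$ separating $\tau(k)$ from $\tau'(k)$; factoring it gives $r_k\colon\F_\Q\to\F_\Q(m)$ with $r_k(\tau(k))\neq r_k(\tau'(k))$. The $r_k$ (extended arbitrarily off $D$) assemble coordinatewise into a homomorphism $\F_\Q^{K}/U\to\F_\Q(m)^{K}/U$; as $\F_\Q(m)$ is finite its ultrapower is isomorphic to $\F_\Q(m)$, and composing with $\F_\Q(m)\hookrightarrow\F_\Q$ yields a homomorphism $\F_\Q^{K}/U\to\F_\Q$ separating $u$ and $u'$, since the separating coordinates include the $U$-large set $D$.

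I expect the main obstacle to be exactly this elimination of ultraproducts in (c). A priori $\cop{ISP}(\F_\Q)$ is only a prevariety, and $\F_\Q$ is infinite, so closure under $\cop{P}_U$ is not automatic; it is the finiteness of the generating algebras in $\K$, channelled through the uniform-rank factorisation and the collapse $\F_\Q(m)^{K}/U\cong\F_\Q(m)$, that supplies it. Parts (a) and (b) should then be routine once the finite reductions and the two ultraproduct facts ({\L}o\'{s} for existential sentences, and the finite-intersection extraction of a separating coordinate) are in place.
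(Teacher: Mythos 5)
Your proposal is correct, and for parts (a) and (b) it is essentially the paper's own argument: reduce to finite algebras using the fact that membership in a universal class is tested on finitely generated subalgebras, plus local finiteness (your explicit restriction to $\alg{A}\in\Q$ is the intended reading; the paper leaves it implicit, and the right-to-left directions would otherwise fail vacuously for algebras outside $\Q$ with no finite subalgebras), and then extract a single coordinate from an ultraproduct along a $U$-large set of indices --- the paper performs this extraction by hand, where you package the same computation as {\L}o\'{s}'s theorem applied to the existential diagram sentence. The genuine difference is in (c). The paper takes $n$ to be the maximal cardinality of a member of $\K$, notes that every member of $\K$ is a homomorphic image of $\F_\Q(n)$, so $\Vg(\F_\Q(n))=\Vg(\Q)$, and then uses the free-algebra identities ($\F_{\Vg(\Q')}=\F_{\Q'}$ and $\F_{\Q'}\in\Q'$) to get $\F_\Q\in\Qg(\F_\Q(n))=\cop{ISP}(\F_\Q(n))$, the last equality holding because $\F_\Q(n)$ is finite; hence $\Qg(\F_\Q)=\cop{ISP}(\F_\Q(n))=\cop{ISP}(\F_\Q)$. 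You instead prove $\cop{P}_U(\F_\Q)\subseteq\cop{ISP}(\F_\Q)$ directly, which does suffice, since $\Qg(\F_\Q)=\cop{ISPP}_U(\F_\Q)\subseteq\cop{ISP}(\cop{ISP}(\F_\Q))=\cop{ISP}(\F_\Q)$ --- note that this inclusion, rather than literal closure of $\cop{ISP}(\F_\Q)$ under ultrapowers, is what is needed and what your argument delivers. Your uniform-rank factorisation of separating homomorphisms $\F_\Q\to\alg{K}$ through the finite algebra $\F_\Q(m)$ is in substance a proof that $\F_\Q\in\cop{ISP}(\F_\Q(m))$, i.e.\ the same pivot to a finite free algebra that the paper reaches by the variety argument; after that, both proofs rest on the collapse of ultraproducts of a finite algebra. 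The paper's route is shorter given its stated preliminaries on free algebras; yours is more self-contained model-theoretically and makes visible exactly where the finiteness of $\K$ enters.
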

\begin{proof}
First observe that, since universal classes are axiomatized by clauses, if $\U$ is a universal class and $\alg{A}$ is an algebra of the same language, 
then $\alg{A}\in\U$ iff each finitely generated subalgebra of $\alg{A}$ is in $\U$. Under the assumption of local finiteness, this is equivalent to 
each finite subalgebra of $\alg{A}$ being a member of $\U$. Hence, to establish (a) and (b), it suffices to prove: 

\begin{itemize}
\item[{\rm (a')}]	Each finite $\alg{B} \in \Ug(\F_\Q)$ is in $\cop{IS}(\F_\Q)$.	
\item[{\rm (b')}]	Each finite $\alg{B}\in\Qg(\F_\Q)$ is in $\cop{ISP(\F_\Q)}$.		
\end{itemize}
(a') Let $B = \{b_1,\ldots, b_n\}$ be an enumeration of the elements of $\alg{B} \in \Ug(\F_\Q)$. By assumption, there exists a 
non-empty set $I$, an ultrafilter $U$ over $I$, and an embedding $f\colon \alg{B}\to \F_{\Q}^I/U$.  Let $\{g_1,\ldots,g_n\}\in \F_{\Q}^I$ be such that $f(b_i)=[g_i]_{U}$ for each $1\leq i\leq n$.
Because $f$ is one-to-one and $U$ is an ultrafilter of $I$, for each  $1\leq j< k\leq n$, the set 
$I_{jk} = \{i \in I\mid   g_j(i)\neq g_k(i)\}$ is in $U$. 
Similarly, since $f$ is a homomorphism, for each $m$-ary operation $h$ of $\lang$ and each $j_1,\ldots,j_m\in\{1,\ldots,n\}$, if $h(b_{j_1},\ldots, b_{j_m})=b_k$, the set $I_{h,j_1,\ldots,j_m}=\{i\in I\mid h(g_{j_1}(i),\ldots,g_{j_m}(i))=g_k(i)\}$ is in $U$.
Now, since $\lang$ is finite,
\[
I'=\bigcap \{I_{jk} \mid  1\leq j< k\leq n \} \cap \bigcap \{I_{h,j_1,\ldots,j_m} \mid  h\mbox{ an $m$-ary operation of $\lang$}, 
1\leq j_1,\ldots,j_m\leq n \}\in U.
\]
So there exists $i\in I'$ and it follows that the map $f'\colon B\to \F_{\Q}$ defined by $f(b_j)=g_j(i)$ is a well-defined one-to-one homomorphism.

(b') Let $\alg{B}$ be a finite algebra in $\Qg(\F_\Q)=\cop{ISPP}_{U}(\F_\Q)$. Then there exists  a set $\K$ of algebras in 
$\cop{P}_{U}(\F_\Q)$ and an embedding $h\colon\alg{B}\to\prod \K$. For each $\alg{C}\in \K$ let $\pi_{\alg{C}}\colon\prod \K\to \alg{C}$ 
be the projection map. Then $\K'=\{\pi_{\alg{C}}\circ h(\alg{B})\mid \alg{C}\in\K \}$ satisfies $\alg{B}\in\cop{IS}(\prod \K')$ and $\K'\subseteq\cop{SP}_U(\F_\Q)$. 
As $\alg{B}$ is finite, each member of $\K'$ is finite. By (a'), $\K'\subseteq\cop{IS}(\F_\Q)$. So $\alg{B}\in\cop{ISPS}(\F_\Q)= \cop{ISP}(\F_\Q)$.

(c) Suppose that $\Q = \Qg(\K)$ for some finite class $\K$ of finite $\lang$-algebras and let 
$n$ be the maximum of the cardinalities of the members of $\K$. Then each member of $\K$ 
is a homomorphic image of $\F_\Q(n)$. So $\Vg(\F_\Q(n)) = \Vg(\K)$ and hence $\F_\Q \in \Qg(\F_\Q(n))$ 
and $\Qg(\F_\Q) = \Qg(\F_\Q(n))$. But since $\K$ is a finite class of finite algebras, $\F_\Q(n)$ is finite. 
So $\Qg(\F_\Q) = \Qg(\F_\Q(n)) = \cop{ISPP}_U(\F_\Q(n)) = \cop{ISP}(\F_\Q(n)) = \cop{ISP}(\F_\Q)$. 
\end{proof}

Combining this result with Propositions~\ref{p:Bergman} and~\ref{p:universalchar}, we also obtain:

\begin{corollary} \label{c:strcomp}
For any locally finite $\lang$-quasivariety $\Q$:
\begin{itemize}
\item[{\rm (a)}]	$\Q$ is universally complete iff each finite algebra in $\Q$ is in $\cop{IS}(\F_\Q)$.
\item[{\rm (b)}]	$\Q$ is structurally complete iff each finite algebra in $\Q$ is in $\cop{ISP}(\F_\Q)$.
\end{itemize}
Moreover, if $\Q = \Qg(\K)$ for some finite class $\K$ of finite $\lang$-algebras, then
\begin{itemize}
\item[{\rm (c)}]	$\Q$ is structurally complete iff  $\Q=\cop{ISP}(\F_\Q)$.
\end{itemize}
\end{corollary}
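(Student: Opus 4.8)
The plan is to derive Corollary~\ref{c:strcomp} directly by combining the characterizations of structural and universal completeness from Propositions~\ref{p:Bergman} and~\ref{p:universalchar} with the descriptions of $\Ug(\F_\Q)$ and $\Qg(\F_\Q)$ supplied by Lemma~\ref{l:localfiniteembeddings}. The key observation is that each completeness notion was characterized in the propositions as the equality of $\Q$ with the appropriate closure of $\F_\Q$, and the lemma tells us precisely which finite algebras lie in that closure.

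For part (a): by Proposition~\ref{p:universalchar}, $\Q$ is universally complete iff $\Q = \Ug(\F_\Q)$. Since $\Ug(\F_\Q) \subseteq \Q$ always holds (as $\F_\Q \in \Q$ and $\Q$ is closed under $\cop{IS}\cop{P}_U$), this equality amounts to the reverse inclusion $\Q \subseteq \Ug(\F_\Q)$, i.e., every algebra of $\Q$ lies in $\Ug(\F_\Q)$. By local finiteness and Lemma~\ref{l:localfiniteembeddings}(a), an algebra $\alg{A} \in \Q$ is in $\Ug(\F_\Q)$ iff each of its finite subalgebras is in $\cop{IS}(\F_\Q)$. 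Ranging over all $\alg{A} \in \Q$, membership of every algebra of $\Q$ in $\Ug(\F_\Q)$ is therefore equivalent to every finite algebra of $\Q$ being in $\cop{IS}(\F_\Q)$. First I would note that every finite algebra of $\Q$ is itself one of the subalgebras in question, so the two quantifications coincide. Part (b) proceeds identically, using Proposition~\ref{p:Bergman} (equating structural completeness with $\Q = \Qg(\F_\Q)$) together with Lemma~\ref{l:localfiniteembeddings}(b), replacing $\cop{IS}(\F_\Q)$ throughout by $\cop{ISP}(\F_\Q)$.

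For part (c), under the additional hypothesis that $\Q = \Qg(\K)$ for a finite class $\K$ of finite algebras, Lemma~\ref{l:localfiniteembeddings}(c) gives $\Qg(\F_\Q) = \cop{ISP}(\F_\Q)$ outright. Combining this with the equivalence ``$\Q$ is structurally complete iff $\Q = \Qg(\F_\Q)$'' from Proposition~\ref{p:Bergman} immediately yields that $\Q$ is structurally complete iff $\Q = \cop{ISP}(\F_\Q)$.

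I do not expect any serious obstacle here, since this corollary is essentially a bookkeeping exercise that threads together three already-established results. The only point requiring mild care is the passage in (a) and (b) from ``every algebra of $\Q$ lies in the closure of $\F_\Q$'' to the statement about finite algebras: one must observe that a finite algebra of $\Q$ counts both as an $\alg{A}$ to be tested and as one of its own finite subalgebras, so that checking the condition on all finite algebras of $\Q$ is neither weaker nor stronger than checking it on all algebras of $\Q$ via their finite subalgebras. This is immediate from the ``iff'' form of Lemma~\ref{l:localfiniteembeddings}(a)--(b), which already performs the reduction to finite subalgebras, so the verification is routine.
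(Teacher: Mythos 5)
Your proposal is correct and follows exactly the route the paper intends: the paper states this corollary without proof, noting only that it follows by combining Lemma~\ref{l:localfiniteembeddings} with Propositions~\ref{p:Bergman} and~\ref{p:universalchar}, which is precisely your argument. Your added care about the two quantifications (every finite algebra of $\Q$ versus every finite subalgebra of a member of $\Q$, which coincide since $\Q$ is closed under subalgebras) is the only point of substance, and you handle it correctly.
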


To find a basis for the admissible clauses of a locally finite quasivariety $\Q$, it suffices, using Lemma~\ref{l:localfiniteembeddings}, 
to find an axiomatic characterization of the finite subalgebras of $\F_\Q$. More precisely:

\begin{lemma}\label{l:mainlemma}
Let $\Q$ be a locally finite $\lang$-quasivariety.
\begin{itemize}
\item[{\rm (a)}] 
The following are equivalent for any set of $\lang$-clauses $\Lambda$:
\begin{itemize}
\item[{\rm (i)}]	 For each finite $\alg{B} \in \Q$: $\alg{B} \in \cop{IS}(\F_\Q)$ iff  $\alg{B}$ satisfies $\Lambda$. 
\item[{\rm (ii)}] 	$\Lambda$ is a basis for the admissible clauses of $\Q$.
\end{itemize}		
\item[{\rm (b)}] 
The following are equivalent for any set of $\lang$-quasi-identities $\Lambda$:
\begin{itemize}
\item[{\rm (iii)}]	 For each finite $\alg{B} \in \Q$: $\alg{B} \in \cop{ISP}(\F_\Q)$ iff $\alg{B}$ satisfies $\Lambda$. 
\item[{\rm (iv)}] 	$\Lambda$ is a basis for the admissible quasi-identities of $\Q$.
\end{itemize}
\end{itemize}	
\end{lemma}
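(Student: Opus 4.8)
The plan is to unfold the two notions of ``basis'' through their definitions and to bridge them with the finite-subalgebra conditions in (i) and (iii) by means of Lemma~\ref{l:localfiniteembeddings}. Recall that $\Lambda$ being a basis for the admissible clauses of $\Q$ means precisely that $\Ug(\F_\Q)$ is axiomatized by $\Lambda$ relative to $\Q$, i.e.\ that for every $\lang$-algebra $\alg{A}$ one has $\alg{A}\in\Ug(\F_\Q)$ iff $\alg{A}\in\Q$ and $\alg{A}$ satisfies $\Lambda$; similarly for quasi-identities and $\Qg(\F_\Q)$ in part~(b). Since $\F_\Q\in\Q$ and $\Q$, being a quasivariety, is a universal class, we have $\Ug(\F_\Q)\subseteq\Q$ and $\Qg(\F_\Q)\subseteq\Q$, so the ``$\alg{A}\in\Q$'' conjunct is automatic whenever $\alg{A}$ lies in the relevant generated class. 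The two parts are proved by identical arguments, replacing $\cop{IS}$, $\Ug$, and Lemma~\ref{l:localfiniteembeddings}(a) by $\cop{ISP}$, $\Qg$, and Lemma~\ref{l:localfiniteembeddings}(b); I would write part~(a) in full and dispatch part~(b) \emph{mutatis mutandis}.

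For (i)$\Rightarrow$(ii) I would fix an arbitrary $\alg{A}\in\Q$ and establish that $\alg{A}\in\Ug(\F_\Q)$ iff $\alg{A}$ satisfies $\Lambda$. The key is the observation already used in the proof of Lemma~\ref{l:localfiniteembeddings}: because $\Lambda$ consists of clauses (universal sentences) and $\Q$ is locally finite, $\alg{A}$ satisfies $\Lambda$ iff every finite subalgebra of $\alg{A}$ does. On the other side, Lemma~\ref{l:localfiniteembeddings}(a) gives $\alg{A}\in\Ug(\F_\Q)$ iff every finite subalgebra of $\alg{A}$ lies in $\cop{IS}(\F_\Q)$. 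Each finite subalgebra $\alg{B}\leq\alg{A}$ is a finite member of $\Q$, so hypothesis (i) equates $\alg{B}\in\cop{IS}(\F_\Q)$ with $\alg{B}$ satisfying $\Lambda$; matching the two finite-subalgebra conditions yields the desired equivalence.

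For (ii)$\Rightarrow$(i) I would take a finite $\alg{B}\in\Q$. If $\alg{B}\in\cop{IS}(\F_\Q)\subseteq\Ug(\F_\Q)$, then (ii) forces $\alg{B}$ to satisfy $\Lambda$. Conversely, if $\alg{B}$ satisfies $\Lambda$, then since $\alg{B}\in\Q$, (ii) gives $\alg{B}\in\Ug(\F_\Q)$, and because $\alg{B}$ is finite, step~(a$'$) in the proof of Lemma~\ref{l:localfiniteembeddings} yields $\alg{B}\in\cop{IS}(\F_\Q)$. I do not expect a deep obstacle here: the whole argument is a transfer between ``all algebras'' and ``finite subalgebras''. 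The one point requiring care is exactly this transfer, which rests essentially on local finiteness together with the two facts that clauses are preserved under subalgebras and are reflected from the finite subalgebras of any algebra; keeping these invocations aligned with the correct direction of Lemma~\ref{l:localfiniteembeddings} is the only thing to watch.
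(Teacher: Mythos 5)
Your proof is correct and follows essentially the same route as the paper: both directions hinge on reducing satisfaction of $\Lambda$ and membership in $\Ug(\F_\Q)$ (resp.\ $\Qg(\F_\Q)$) to conditions on finite subalgebras via local finiteness and Lemma~\ref{l:localfiniteembeddings}, then matching them through the hypothesis on finite algebras. Your explicit handling of the ``$\alg{A}\in\Q$'' conjunct in the definition of basis is a point the paper leaves implicit, but the argument is the same.
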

\begin{proof}
(a)  (i) $\Rightarrow$ (ii). It suffices to observe that for each $\alg{A} \in \Q$:\\[.1in]
\noindent\begin{tabular}{rll}
$\alg{A}$ satisfies $\Lambda$ \ iff & each finite subalgebra of $\alg{A}$ satisfies $\Lambda$  	&($\Q$ is locally finite)\\
				 iff & each finite subalgebra of $\alg{A}$ is in $\cop{IS}(\F_\Q)$ 	&(assumption)\\
  				 iff &   $\alg{A} \in \Ug(\F_\Q)$					&(Lemma~\ref{l:localfiniteembeddings}(a)). 
\end{tabular}\\[.1in]
(ii) $\Rightarrow$ (i). Suppose that $\Lambda$ is a basis for the admissible clauses of $\Q$ and 
consider a finite $\alg{B} \in \Q$. By assumption, $\alg{B}$ satisfies $\Lambda$ iff $\alg{B} \in \Ug(\F_\Q)$. 
But then by Lemma~\ref{l:localfiniteembeddings}(a), $\alg{B} \in \Ug(\F_\Q)$ iff $\alg{B} \in \cop{IS}(\F_\Q)$.

(b) Very similar to (a). 
 \end{proof}


We conclude this section by relating the problem of finding bases for the admissible clauses and quasi-identities of 
a quasivariety $\Q$ generated by a finite algebra $\alg{A}$ to natural dualities. 
Suppose that there is a structure $\spa{A}$ that yields a strong natural duality on $\Q$. To obtain a basis for the 
admissible clauses of $\Q$, we seek a set of clauses $\Lambda$ that characterizes the finite algebras of $\Q$ that 
embed into $\F_\Q$. But these algebras correspond on the dual side to images of finite powers of 
$\spa{A}$ under morphisms of the dual category.  Hence we first seek conditions $C$ on  dual spaces 
(in the cases considered in this paper,  first-order conditions) to be images of finite powers of $\spa{A}$  under morphisms 
of the dual category. We then seek a set of clauses $\Lambda$ such that a finite 
algebra $\alg{B} \in \Q$ satisfies the clauses in $\Lambda$ iff its dual space satisfies the conditions $C$.
 In this way, we avoid confronting the characterization of subalgebras of the free algebra $\F_\Q$ directly and consider rather  
 the range of morphisms of the combinatorial structures $\spa{A}^n$ for $n \in \mathbb{N}$. 
 Similarly, to obtain a basis for the admissible quasi-identities of $\Q$, we seek a set of quasi-identities $\Lambda$ that 
 characterizes the finite algebras of $\Q$ that embed into a finite power of $\F_\Q$, and therefore corresponding conditions 
 on  dual spaces to be images of finite copowers of finite powers of $\spa{A}$ under morphisms of the dual category.


\section{Distributive Lattices and Stone Algebras}\label{s:distributive}

In this section, we investigate and find bases for the admissible clauses and quasi-identities of (bounded) distributive lattices 
and Stone algebras. For the sake of uniformity and also as preparation for the more involved cases of Kleene and De~Morgan algebras, 
we make use here of natural dualities. Let us remark, however, that in this setting direct algebraic proofs characterizing subalgebras of (powers of) 
free algebras are also relatively straightforward.


\subsection{Distributive Lattices} 

The class of bounded distributive lattices $\BDL$ is a variety and coincides with $\cop{ISP}(\alg{2})$, where
$\alg{2}=(\{0,1\},\min,\max,0,1)$~(see, e.g.,~\cite{BS81}). 
Clearly $\alg{2}\in \cop{IS}(\F_{\BDL})$, so also $\BDL=\cop{ISP}(\alg{2})\subseteq\Qg(\F_{\BDL})$ and, by 
Proposition~\ref{p:Bergman}:

\begin{proposition}
 The variety of bounded distributive lattices is structurally complete.  
 \end{proposition}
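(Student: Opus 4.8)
The plan is to invoke Proposition~\ref{p:Bergman}, which tells us that a quasivariety $\Q$ is structurally complete iff $\Q = \Qg(\F_\Q)$. So it suffices to show that $\BDL = \Qg(\F_{\BDL})$. Since $\F_{\BDL} \in \BDL$ and $\BDL$ is a quasivariety (indeed a variety), the inclusion $\Qg(\F_{\BDL}) \subseteq \BDL$ is automatic. The work is therefore entirely in the reverse inclusion $\BDL \subseteq \Qg(\F_{\BDL})$.

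\medskip

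For the reverse inclusion, the key observation is the identity $\BDL = \cop{ISP}(\alg{2})$, where $\alg{2}$ is the two-element bounded distributive lattice. The plan is to show that the single generating algebra $\alg{2}$ already lives in $\cop{IS}(\F_{\BDL})$, i.e.\ that $\alg{2}$ embeds into the free algebra $\F_{\BDL}$ on countably many generators. This is immediate: picking any single generator $x$ of $\F_{\BDL}$, the subalgebra it generates together with the bounds $0,1$ contains a two-element chain, and more directly the bounds $0 \neq 1$ already furnish a copy of $\alg{2}$ as a subalgebra (the subalgebra generated by the empty set is $\{0,1\}$, which is isomorphic to $\alg{2}$ since $\F_{\BDL}$ is non-trivial). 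Hence $\alg{2} \in \cop{IS}(\F_{\BDL}) \subseteq \Qg(\F_{\BDL})$.

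\medskip

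Once $\alg{2} \in \Qg(\F_{\BDL})$ is established, the chain closes itself: applying the operator $\cop{ISP}$ (under which any quasivariety is closed) gives $\cop{ISP}(\alg{2}) \subseteq \Qg(\F_{\BDL})$, and since $\BDL = \cop{ISP}(\alg{2})$ this yields $\BDL \subseteq \Qg(\F_{\BDL})$. Combined with the trivial reverse inclusion we obtain $\BDL = \Qg(\F_{\BDL})$, and Proposition~\ref{p:Bergman} delivers structural completeness.

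\medskip

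There is essentially no hard step here; the only point requiring a moment's care is the claim that $\alg{2}$ embeds into $\F_{\BDL}$. The safest justification is to note that $\F_{\BDL}$ is non-trivial (its bounds $0$ and $1$ are distinct, since $\BDL$ contains non-trivial algebras), so the two-element subalgebra $\{0,1\}$ generated by the constants is isomorphic to $\alg{2}$; this avoids any subtlety about which elements a single generator produces. This is exactly the argument sketched in the excerpt: ``Clearly $\alg{2}\in \cop{IS}(\F_{\BDL})$, so also $\BDL=\cop{ISP}(\alg{2})\subseteq\Qg(\F_{\BDL})$,'' after which Proposition~\ref{p:Bergman} finishes the proof.
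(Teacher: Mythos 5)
Your proof is correct and follows essentially the same route as the paper: both establish $\alg{2}\in \cop{IS}(\F_{\BDL})$, conclude $\BDL=\cop{ISP}(\alg{2})\subseteq\Qg(\F_{\BDL})$, and apply Proposition~\ref{p:Bergman}. Your only addition is to spell out the paper's ``clearly'' by noting that the subalgebra of constants $\{\bot,\top\}$ of the non-trivial algebra $\F_{\BDL}$ is a copy of $\alg{2}$, which is a perfectly sound justification.
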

 
 \noindent
However, $\BDL$ is neither universally complete nor non-negative universally complete; that is, 
there are  non-negative clauses that are admissible but not satisfied by all members of $\BDL$. 
Here, we provide a basis for the admissible clauses of  $\BDL$, making use 
of the well-known natural duality between bounded distributive lattices and Priestley spaces 
(see~\cite{Pri70} and~\cite[Theorem~4.3.2]{CD98}).

Recall that a structured topological space $(X,\leq,\tau)$ is a {\em Priestley space} if $(X,\tau)$ is a compact topological space, 
$(X,\leq)$ is a partially ordered set, and for all $x,y\in X$ with $x\nleq y$,  there exists a clopen upset $U\subseteq X$ such that $x\in U$ and $y\notin U$.   
The structured topological space $\spa{2}=(\{0,1\},\leq_2,\mathcal{P}(\{0,1\}))$, where $\leq_2$ is the partial order on $\{0,1\}$ such that $0\leq_2 1$, 
determines a strong natural duality between the variety $\BDL$ and the category of Priestley spaces. In particular, by 
 Theorem~\ref{Th:FreeDual} applied to bounded distributive lattices, 
the free $n$-generated bounded distributive lattice is $\ABDL(\spa{2}^n)$  for $n \in \mathbb{N}$.
Moreover, a finite bounded distributive lattice $\alg{L}$ is a retract of a free distributive lattice (that is, $\alg{L}$ is projective) 
iff the poset $\XBDL(\alg{L})$ is a lattice (see~\cite{BH70}).

The following lemma provides the key step in our method for obtaining a basis for the admissible clauses of 
bounded distributive lattices (Theorem~\ref{Th:BasisBDL}).

\begin{lemma}\label{Lemma_ISPU-BDL}
Let $\alg{L}$ be a finite bounded distributive lattice. Then the following are equivalent:
\begin{itemize}
\item[{\rm (i)}] $\alg{L}\in \cop{IS}(\F_{\BDL})$.
\item[{\rm (ii)}] $\XBDL(\alg{L})$ is a non-empty bounded poset.
\item[{\rm (iii)}] $\alg{L}$ satisfies the following clauses:
  \begin{eqnarray}
     \top\eq\bot &\imp&\emptyset 				\label{Eq:nontrivial}\\
    x\wedge y\eq \bot &\imp& x\eq \bot,\ y\eq\bot 	\label{Eq:0prime}\\
    x\vee y\eq \top&\imp& x\eq \top, \ y\eq\top.		\label{Eq:1prime}
  \end{eqnarray}
\end{itemize}
\end{lemma}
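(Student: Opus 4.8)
The plan is to establish the cycle (ii) $\Leftrightarrow$ (iii) by a direct analysis of the dual poset, and (i) $\Leftrightarrow$ (ii) through the strong duality between $\BDL$ and Priestley spaces. Throughout, I write $P = \XBDL(\alg{L})$; since $\alg{L}$ is finite, $P$ carries the discrete topology and may be identified with the poset of prime filters of $\alg{L}$ ordered by inclusion (a homomorphism $\alg{L}\to\alg{2}$ being identified with the preimage of $1$, so that $h\leq h'$ iff $h^{-1}(1)\subseteq h'^{-1}(1)$).

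For (ii) $\Leftrightarrow$ (iii) I would treat the three features of a non-empty bounded poset separately. First, $P$ is non-empty iff $\alg{L}$ admits a prime filter, i.e.\ iff $\alg{L}$ is non-trivial, which is precisely what (\ref{Eq:nontrivial}) asserts. For the least element, note that $\{\top\}$ is the smallest filter of $\alg{L}$ and that $\bigcap P = \{\top\}$, since $\top$ is the join of all the join-irreducibles and the prime filters are exactly the up-sets generated by these; hence $P$ has a least element iff $\{\top\}$ is itself a prime filter, which (given non-triviality) holds iff $x\vee y\eq\top$ forces $x\eq\top$ or $y\eq\top$, i.e.\ iff (\ref{Eq:1prime}) holds. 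Dually, $L\setminus\{\bot\}$ is the largest candidate for a proper filter and equals $\bigcup P$, so $P$ has a greatest element iff $L\setminus\{\bot\}$ is a prime filter, equivalently iff $\{\bot\}$ is a prime ideal, which holds iff (\ref{Eq:0prime}) holds. Conjoining the three equivalences yields (ii) $\Leftrightarrow$ (iii).

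For (i) $\Leftrightarrow$ (ii) I would first reduce to finite powers: since $\alg{L}$ is finite, $\alg{L}\in\cop{IS}(\F_{\BDL})$ iff $\alg{L}$ embeds into $\F_{\BDL}(n) = \ABDL(\spa{2}^{n})$ for some $n$, because the image is finitely generated and $\F_{\BDL}$ is the directed union of the $\F_{\BDL}(n)$. By Theorem~\ref{Th:FreeDual} together with the fact that a strong duality sends algebra embeddings to dual surjections, this holds iff there is a continuous order-preserving surjection $\spa{2}^{n}\twoheadrightarrow P$ for some $n$; as $P$ is finite and discrete, such a morphism is simply an order-preserving surjection from the finite cube $\spa{2}^{n}$. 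The implication (i) $\Rightarrow$ (ii) is then immediate: an embedding forces $\alg{L}$ non-trivial, so $P\neq\emptyset$, and if $g\colon\spa{2}^{n}\twoheadrightarrow P$ is an order-preserving surjection then for every $p=g(z)$ we have $g(\bot)\leq g(z)=p\leq g(\top)$, whence $g(\bot)$ and $g(\top)$ are the least and greatest elements of $P$.

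The substantial direction, and the main obstacle, is (ii) $\Rightarrow$ (i): every non-empty bounded finite poset $P$ is an order-preserving surjective image of some $\spa{2}^{n}$. I would prove this by induction on $|P|$, the cases $|P|\leq 2$ being immediate. For the step, choose a coatom $p^{*}$ of $P$ (a maximal element of $P\setminus\{\hat 1\}$); then $P' = P\setminus\{p^{*}\}$ is again non-empty and bounded, so the induction hypothesis gives an order-preserving surjection $g'\colon\spa{2}^{n}\twoheadrightarrow P'$. Adding one coordinate, I would set $g\colon\spa{2}^{n+1}\to P$ by $g(x,0)=g'(x)$, and $g(x,1)=p^{*}$ if $g'(x)\leq p^{*}$ while $g(x,1)=\hat 1$ otherwise. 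A short case analysis shows $g$ is order-preserving, the only delicate point being comparabilities within the top layer and across layers, which work out precisely because everything not below $p^{*}$ is sent to $\hat 1$; and $g$ is onto since $g'$ covers $P'$ while the fibre of $g'$ over $\hat 0$ is mapped to $p^{*}$. This is the only step demanding care: note that $P$ need not be a lattice, so no join- or meet-based formula for the surjection is available, and correspondingly $\alg{L}$ is in general not projective, consistent with the cited fact that projectivity corresponds to $\XBDL(\alg{L})$ being a \emph{lattice}, a strictly stronger condition than being a bounded poset.
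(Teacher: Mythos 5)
Your proof is correct, and its overall skeleton---dualize, identify (ii) with (iii) by locating the least and greatest points of $\XBDL(\alg{L})$, and mediate (i) $\Leftrightarrow$ (ii) through the strong duality---matches the paper's. Your (ii) $\Leftrightarrow$ (iii) via prime filters is the same computation the paper performs with explicit homomorphisms onto $\alg{2}$: its bottom element $h$ (with $h(\top)=1$ and $h(x)=0$ otherwise) is your prime filter $\{\top\}$, and its top element $k$ (with $k(\bot)=0$ and $k(x)=1$ otherwise) is your $L\setminus\{\bot\}$. The genuine divergence is in (ii) $\Rightarrow$ (i). The paper replaces $\leq$ by the weaker order $\leq'$ in which $x\leq' y$ iff $x=s$, or $y=t$, or $x=y$ (with $s,t$ the bounds); $(X,\leq')$ is then a lattice, so by the Balbes--Horn characterization of finite projective distributive lattices~\cite{BH70} the algebra $\ABDL(X,\leq',\mathcal{P}(X))$ is a retract of a finitely generated free algebra, hence lies in $\cop{IS}(\F_{\BDL})$, and dualizing the identity map $(X,\leq')\to(X,\leq)$ embeds $\alg{L}$ into it. You avoid the projectivity citation entirely and instead prove the purely order-theoretic fact that every finite non-empty bounded poset is an order-preserving image of some cube $\spa{2}^{n}$, by induction peeling off a coatom $p^{*}$; your two-layer map $g(x,0)=g'(x)$, $g(x,1)=p^{*}$ or $\hat 1$ according as $g'(x)\leq p^{*}$ or not, is indeed order-preserving and onto (the case analysis closes, the only point needing the definition being that $g'(x)\not\leq p^{*}$ and $x\leq y$ force $g'(y)\not\leq p^{*}$). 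What each approach buys: yours is self-contained and elementary, with two small bookkeeping obligations worth recording---that $p^{*}\neq\hat 0$ when $|P|\geq 3$ (true, since some element outside $\{\hat 0,\hat 1\}$ lies strictly above $\hat 0$, so $P'$ really is bounded) and that continuity is vacuous on finite discrete spaces---whereas the paper's argument is shorter and, more importantly, its order-collapsing trick is exactly the template reused for De~Morgan algebras (Lemmas~\ref{Lem_demorgansub} and~\ref{Lem_ISPUdemorgan}) and Kleene algebras (Lemmas~\ref{Lem_kleeneSub} and~\ref{Lem_ISPUkleene}), where the collapsed dual structures are again retracts of free algebras; your inductive construction would have to be redone ad hoc in each of those more involved settings.
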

\begin{proof}
(i) $\Rightarrow$ (ii). Since $\alg{L}$ is finite,  $\alg{L}\in \cop{IS}(\F_{\BDL}(n))$ for some  $n\in\mathbb{N}$. 
Let $h\colon \alg{L}\to \F_{\BDL}(n)$ be a one-to-one homomorphism.  $\spa{2}$ yields a strong 
duality, so $\XBDL(h)\colon \XBDL(\F_{\BDL}(n))\to \XBDL(\alg{L})$ 
is an onto order-preserving map. But $\XBDL(\F_{\BDL}(n))$ is isomorphic to $\spa{2}^{n}$, so it is bounded, and 
hence $\XBDL(\alg{L})$ is also bounded.

(ii) $\Rightarrow$ (i). Let us denote  the top and bottom maps in $\XBDL(\alg{L})=(X,\leq,\mathcal{P}(X))$ by $t$ and $s$, respectively.  
Let  $\leq'$ denote the partial order on $X$ defined as follows:
$$
x\leq' y\qquad \mbox{ iff } \qquad x=s  \quad \mbox{or}\quad y=t \quad \mbox{or}\quad x=y.
$$
Clearly $(X,\leq')$ is a lattice. Hence $\ABDL(X, \leq',\mathcal{P}(X))$ is a retract of a finitely generated free distributive lattice, 
and it follows that $\ABDL(X, \leq',\mathcal{P}(X))\in\cop{IS}(\F_{\BDL})$.

Since $\leq'\subseteq \leq$, the identity map $\id_X$ on $X$ is an order-preserving continuous map from  $(X, \leq',\mathcal{P}(X))$ 
onto $\XBDL(\alg{L})$. But $\spa{2}$ determines a strong natural duality, so the map 
$\ABDL(\id_X)\circ e_{\alg{L}}\colon \alg{L}\to \ABDL(X, \leq',\mathcal{P}(X))$ 
is one-to-one. Hence $\alg{L}\in \cop{IS}(\F_{\BDL})$.

(ii) $\Rightarrow$ (iii). Since $\XBDL(\alg{L})$ is non-empty, $\alg{L}$ is non-trivial and satisfies \eqref{Eq:nontrivial}. 
Suppose now that  $s\in\XBDL(\alg{L})$ is the bottom map of $\XBDL(\alg{L})$, 
and that $c,d\in L$ are such that $c\vee d=\top$. Then $e_{\alg{L}}(c\vee d)(s)=e_{\alg{L}}(c)(s)\vee e_{\alg{L}}(d)(s)=1\in \alg{2}$, so 
$e_{\alg{L}}(c)(s)=1$ or $e_{\alg{L}}(d)(s)=1$. Suppose that $e_{\alg{L}}(c)(s)=1$. Since $s$ is the bottom element of $\XBDL(\alg{L})$ and 
$e_{\alg{L}}(c)$ is a monotone map, $e_{\alg{L}}(c)(z)=1$ for each $z\in\XBDL(\alg{L})$; equivalently, $c=\top$. Similarly, $d=\top$ if $e_{\alg{L}}(d)(s)=1$. 
So $\alg{L}$ satisfies \eqref{Eq:1prime}. Similarly, $\alg{L}$ satisfies \eqref{Eq:0prime}  since $\XBDL(\alg{L})$ has an upper bound.

(iii) $\Rightarrow$ (ii). By  \eqref{Eq:1prime} and \eqref{Eq:nontrivial}, the map $h$ defined by $h(\top)=1$ and $h(x)=0$ for each $x\neq \top$ 
is a homomorphism from $\alg{L}$ into $\alg{2}$ and is the bottom element of $\XBDL(\alg{L})$. 
Similarly, using \eqref{Eq:0prime} and \eqref{Eq:nontrivial}, the map $k$ defined by 
$k(\bot)=0$ and $k(x)=1$ if $x\neq\bot$ is the top element of $\XBDL(\alg{L})$. 
So  $\XBDL(\alg{L})$ is a non-empty bounded poset. 
\end{proof}

Notice the key intermediary role played here by the natural duality in characterizing finite subalgebras of $\F_{\BDL}$ using the clauses 
 \eqref{Eq:nontrivial}, \eqref{Eq:0prime}, and \eqref{Eq:1prime}. On one hand, it is shown that satisfaction of the clauses by the finite algebra $\alg{L}$ 
corresponds to a certain characterization of $\XBDL(\alg{L})$. On the other hand, it is also shown that this characterization of $\XBDL(\alg{L})$ 
corresponds  to $\alg{L} \in \cop{IS}(\F_{\BDL})$. Observe, moreover, that this second step makes use of the 
dual objects of a special subclass of  $\cop{IS}(\F_{\BDL})$: in this case, the finite projective distributive lattices. 
The structure of this proof will be repeated throughout the remainder of this paper. 

Now, immediately, using Lemmas~\ref{l:mainlemma} and~\ref{Lemma_ISPU-BDL}:

\begin{theorem}\label{Th:BasisBDL}
 $\{\eqref{Eq:nontrivial}, {\eqref{Eq:0prime}}, {\eqref{Eq:1prime}}\}$ is  a basis for the admissible clauses of bounded distributive lattices.
\end{theorem}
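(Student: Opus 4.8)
The plan is to derive the statement directly from the two preceding lemmas, with essentially no new work beyond checking that their hypotheses align. First I would observe that $\BDL$ is a variety, hence a quasivariety, and that it is locally finite: since $\BDL = \cop{ISP}(\alg{2})$ with $\alg{2}$ finite, every finitely generated bounded distributive lattice is finite. This makes Lemma~\ref{l:mainlemma} applicable with $\Q = \BDL$.

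Next I would set $\Lambda = \{\eqref{Eq:nontrivial}, \eqref{Eq:0prime}, \eqref{Eq:1prime}\}$, which is a finite set of $\lang$-clauses. The heart of the matter is already contained in Lemma~\ref{Lemma_ISPU-BDL}: the equivalence of its conditions (i) and (iii) says precisely that for each finite $\alg{L} \in \BDL$, we have $\alg{L} \in \cop{IS}(\F_{\BDL})$ iff $\alg{L}$ satisfies $\Lambda$. This is exactly condition (i) of Lemma~\ref{l:mainlemma}(a).

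Finally, invoking the implication (i) $\Rightarrow$ (ii) of Lemma~\ref{l:mainlemma}(a), I would conclude that $\Lambda$ is a basis for the admissible clauses of $\BDL$, as claimed.

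I do not anticipate any genuine obstacle at this stage: the substantive content---the characterization of the finite subalgebras of $\F_{\BDL}$ by the clauses, obtained through Priestley duality---lives entirely in Lemma~\ref{Lemma_ISPU-BDL}, while the reduction of admissibility of clauses to such an embedding characterization is packaged in Lemma~\ref{l:mainlemma}. The only point requiring a little care is that the intermediate condition (ii) of Lemma~\ref{Lemma_ISPU-BDL} (non-emptiness and boundedness of the dual poset $\XBDL(\alg{L})$) serves only as a bridge in establishing (i) $\Leftrightarrow$ (iii) and need not surface in the final argument; one simply chains the two outer equivalences. Accordingly, the proof is a one-line combination of the two lemmas.
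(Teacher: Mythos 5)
Your proof is correct and matches the paper's own argument exactly: the theorem is stated there as an immediate consequence of Lemma~\ref{l:mainlemma}(a) combined with the equivalence (i) $\Leftrightarrow$ (iii) of Lemma~\ref{Lemma_ISPU-BDL}. Your explicit check that $\BDL$ is locally finite (so that Lemma~\ref{l:mainlemma} applies) is a point the paper leaves implicit, but otherwise the two arguments coincide.
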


By making a  detour through the case of bounded distributive lattices, 
we can also show that the variety $\cls{DL}$ of distributive lattices is universally complete.

\begin{theorem}
The variety of distributive lattices is universally complete.
\end{theorem}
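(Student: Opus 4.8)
The plan is to apply Corollary~\ref{c:strcomp}(a): since $\cls{DL}$ is locally finite (its free algebras on finitely many generators are finite), it is universally complete if and only if every finite distributive lattice belongs to $\cop{IS}(\F_{\cls{DL}})$. So I would fix an arbitrary finite distributive lattice $\alg{L}$ and aim to embed it into $\F_{\cls{DL}}$, routing the argument through bounded distributive lattices exactly as the preceding remark suggests. The point of the detour is that $\alg{L}$ itself need not embed into $\F_{\BDL}$ as a bounded lattice (e.g.\ the four-element lattice $\alg{2}^2$ violates \eqref{Eq:0prime}), but a suitably padded version of it does.

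First I would adjoin a fresh bottom and a fresh top to $\alg{L}$, forming the finite bounded distributive lattice $\alg{L}^\sharp=\{0\}\oplus\alg{L}\oplus\{1\}$ (ordinal sum). Because the new bottom lies strictly below, and the new top strictly above, every element of $\alg{L}$, the external bounds are vacuously meet- and join-prime; hence $\alg{L}^\sharp$ satisfies \eqref{Eq:nontrivial}, \eqref{Eq:0prime}, and \eqref{Eq:1prime}. By Lemma~\ref{Lemma_ISPU-BDL}, $\alg{L}^\sharp\in\cop{IS}(\F_{\BDL})$, and since $\alg{L}^\sharp$ is finite, $\alg{L}^\sharp\in\cop{IS}(\F_{\BDL}(n))$ for some $n\in\mathbb{N}$. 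Forgetting the two constants, $\alg{L}$ is a $\{\wedge,\vee\}$-sublattice of $\alg{L}^\sharp$, so it embeds into the lattice reduct of $\F_{\BDL}(n)$. It therefore suffices to embed this reduct into $\F_{\cls{DL}}$.

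The hard part will be this last embedding, since $\F_{\cls{DL}}$ has no bounds whereas the reduct of $\F_{\BDL}(n)$ carries the external bounds adjoined to $\F_{\cls{DL}}(n)$; concretely the reduct is $\{\bot\}\oplus\F_{\cls{DL}}(n)\oplus\{\top\}$ (a fact I would verify by normal forms, using that constant-free terms satisfy the same identities in $\F_{\BDL}(n)$ as in $\F_{\cls{DL}}(n)$, and that the external bounds are distinct from $\bigwedge_i x_i$ and $\bigvee_i x_i$). To realise the adjoined bounds inside $\F_{\cls{DL}}$ I would use two spare free generators $a,b$ alongside a copy of $\F_{\cls{DL}}(n)$ on generators $x_1,\dots,x_n$, and set $\Theta(d)=(a\vee d)\wedge b$ for $d\in\F_{\cls{DL}}(n)$, $\Theta(\bot)=a\wedge b$, and $\Theta(\top)=b$. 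Distributivity makes $\Theta$ a lattice homomorphism, while $a\wedge b\leqn\Theta(d)\leqn b$ shows that $\Theta(\bot)$ and $\Theta(\top)$ bound the image. Injectivity on the internal copy follows from the retraction $\rho\colon\F_{\cls{DL}}\to\F_{\cls{DL}}(n)$ given by $x_i\mapsto x_i$, $a\mapsto\bigwedge_i x_i$, $b\mapsto\bigvee_i x_i$, since $\rho\circ\Theta$ is the identity on $\F_{\cls{DL}}(n)$; a short separate check, using the explicit description of the order on a free distributive lattice, shows that $\Theta(\bot)$ and $\Theta(\top)$ are identified neither with any $\Theta(d)$ nor with each other. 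Composing the embeddings yields $\alg{L}\in\cop{IS}(\F_{\cls{DL}})$, and Corollary~\ref{c:strcomp}(a) then gives the result. The genuine obstacle is precisely this injectivity/bound-realisation step; everything else is bookkeeping built on the already-established bounded case.
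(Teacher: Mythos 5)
Your proof is correct, and its skeleton is exactly the paper's: reduce to finite $\alg{L}$ via Corollary~\ref{c:strcomp}(a), pad $\alg{L}$ with fresh bounds so that \eqref{Eq:nontrivial}, \eqref{Eq:0prime}, \eqref{Eq:1prime} hold, and feed the padded lattice to Lemma~\ref{Lemma_ISPU-BDL}. Where you diverge is the return trip from $\F_{\BDL}$ to $\F_{\cls{DL}}$. The paper simply invokes $\overline{\F}_{\cls{DL}}\cong\F_{\BDL}$ (the $\omega$-generated form of precisely the normal-form fact you propose to verify for $\F_{\BDL}(n)$) and is then done in one line: the embedding $\overline{\alg{L}}\hookrightarrow\F_{\BDL}$ is a morphism of \emph{bounded} lattices, so it sends the two fresh bounds of $\overline{\alg{L}}$ to $\bot$ and $\top$, and injectivity then forces the copy of $\alg{L}$ to land in $\F_{\BDL}\setminus\{\bot,\top\}$, i.e.\ inside $\F_{\cls{DL}}$. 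In other words, what you call ``the genuine obstacle'' --- realising the adjoined bounds inside $\F_{\cls{DL}}$ --- never has to be confronted: one need not embed the whole bounded reduct, only the image of $\alg{L}$, and that image automatically avoids the bounds. Your endgame is nonetheless sound: $\Theta(d)=(a\vee d)\wedge b$ with two spare generators is a lattice homomorphism by distributivity, the retraction $\rho$ gives injectivity on the internal copy, and the remaining distinctness checks go through (e.g.\ by evaluating into $\alg{2}$). This extra work even buys a slightly stronger statement --- the full lattice reduct $\{\bot\}\oplus\F_{\cls{DL}}(n)\oplus\{\top\}$ of $\F_{\BDL}(n)$, bounds included, embeds into $\F_{\cls{DL}}$ --- but for the theorem itself it is dispensable, and recognising that bound-preservation does the job is worth internalising, since the paper reuses the same $\overline{(\,\cdot\,)}$ trick for De~Morgan and Kleene lattices in Sections~\ref{Sec:DemLat} and~\ref{Sec:KleLat}.
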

\begin{proof}
First we describe a simple construction that will be 
employed with slight modifications  in Sections~\ref{Sec:DemLat}~and~\ref{Sec:KleLat}. 
 Given a distributive lattice $\alg{L}$, let $\overline{\alg{L}}$ 
 denote the bounded distributive lattice obtained from $\alg{L}$ by adding fresh 
 top and bottom elements $\top,\bot$, respectively.
   
By  Corollary~\ref{c:strcomp}(a), to establish the universal completeness of $\cls{DL}$, it suffices to show that each finite 
distributive lattice $\alg{L}$ can be embedded into $\F_{\cls{DL}}$. Observe, however, that $\overline{\alg{L}}$ 
clearly satisfies the clauses~\eqref{Eq:nontrivial}, \eqref{Eq:0prime},  and \eqref{Eq:1prime}, so by 
Lemma~\ref{Lemma_ISPU-BDL},  $\overline{\alg{L}}\in\cop{IS}({\F_{\BDL}})$. 
But $\overline{\F}_{\cls{DL}}\cong\F_{\BDL}$, so  $\alg{L}\in\cop{IS}({\F_{\cls{DL}}})$ 
as required. 

\end{proof}


\subsection{Stone algebras}

An algebra $(A,\vee,\wedge, ^{*}\!,\bot,\top)$ is called a {\em pseudocomplemented distributive lattice} if $(A,\vee,\wedge,\bot,\top)$ is a bounded 
distributive lattice and $a^{*}= \max \{b\in A \mid a\wedge b=\bot\}$ for each $a\in A$. A pseudocomplemented distributive lattice is 
 a {\em Stone algebra} if, additionally, $a^{*}\vee  a^{**}= \top$ for all $a \in A$. 
The class $\cls{ST}$ of Stone algebras forms a variety whose unique proper non-trivial subvariety is the variety of Boolean algebras. 
We have already seen that the variety of Boolean algebras is non-negative universally complete but not universally complete. In this section we will 
show that the same holds for $\cls{ST}$. 

Let $\alg{S}=(\{0,a,1\},\wedge,\vee,^{*},0,1)$ denote the Stone algebra with order  $0<a<1$
and $0^{*}=1$, $1^{*}=0$, and $a^{*}=0$. Then $\cls{ST} = \cop{ISP}(\alg{S})$ (see, e.g.,~\cite{Gra98}). 
Consider now the structure $\spa{S}=(\{0,a,1\},\leq_S,d,\mathcal{P}(\{0,a,1\}))$ where $\leq_S$ is a partial order with a unique non-trivial 
edge $1\leq_S a$ and $d\colon \{0,a,1\}\to  \{0,a,1\}$ is defined by $d(a)=d(1)=1$ and $d(0)=0$ (see Figure~\ref{Fig:Stone}(a)). 
Then $\spa{S}$ determines a 
strong natural duality between  $\cls{ST}$ and the class of Priestley spaces endowed with a continuous map that sends each 
element $x$ to the unique minimal element $y$ satisfying $y \le x$
(see~\cite{Dav82} and~\cite[Theorem 4.3.7]{CD98}). 
By Theorem~\ref{Th:FreeDual}, the dual space of the free $n$-generated Stone algebra $\XST(\F_{\cls{ST}}(n))$ is isomorphic to 
$\spa{S}^{n}$ (Figure~\ref{Fig:Stone}(b) depicts the case $n=2$).

\begin{figure}
\begin{center}
\begin{pspicture}(0,-.7)(12,4)
\psdots(1,1)(2,1)(2,2)
\psline(2,1)(2,2)
\pscurve[linecolor=gray,arrows=<-](2.15,1.1)(2.3,1.5)(2.15,1.9)
\psarc[linecolor=gray,arrows=->](0.8,1.1){.2}{20}{310}
\psarc[linecolor=gray,arrows=->](1.75,1.1){.2}{20}{310}\rput[c](1,.6){$0$}
\rput[c](2,.6){$1$}
\rput[c](2,2.4){$a$}
\rput[c](1.5,-.5){(a) $\spa{S}$}
\psdots(5,1)(6.2,1)(6.2,2)(7.4,1)(7.4,2)(9.8,1)(8.6,2)(11,2)(9.8,3)
\psarc[linecolor=gray,arrows=->](4.8,1.1){.2}{20}{310}
\psline(6.2,1)(6.2,2)
\pscurve[linecolor=gray,arrows=<-](6.35,1.1)(6.5,1.5)(6.355,1.9)
\psarc[linecolor=gray,arrows=->](5.95,1.1){.2}{20}{310}
\psline(7.4,1)(7.4,2)
\pscurve[linecolor=gray,arrows=<-](7.55,1.1)(7.7,1.5)(7.55,1.9)
\psarc[linecolor=gray,arrows=->](7.15,1.1){.2}{20}{310}
\psline(9.8,1)(11,2)(9.8,3)(8.6,2)(9.8,1)
\psline[linecolor=gray,arrows=<-](9.8,1.15)(9.8,2.85)
\pscurve[linecolor=gray,arrows=->](8.6,1.85)(9.1,1.2)(9.65,1)
\pscurve[linecolor=gray,arrows=->](11,1.85)(10.5,1.2)(9.95,1)
\psarc[linecolor=gray,arrows=->](9.8,0.75){.2}{120}{430}
\rput[c](5,.6){$(0,0)$}
\rput[c](6.2,.6){$(0,1)$}
\rput[c](6.2,2.4){$(0,a)$}
\rput[c](7.4,.6){$(1,0)$}
\rput[c](7.4,2.4){$(a,0)$}
\rput[c](9.8,.3){$(1,1)$}
\rput[c](8.45,2.4){$(a,1)$}
\rput[c](11.15,2.4){$(1,a)$}
\rput[c](9.8,3.3){$(a,a)$}
\rput[c](8,-.5){(b) $\spa{S}^2$}

\end{pspicture}
\caption{The spaces $\spa{S}$ and $\spa{S}^2$ ($d$ is represented by arrows) }\label{Fig:Stone}
\end{center}
\end{figure}
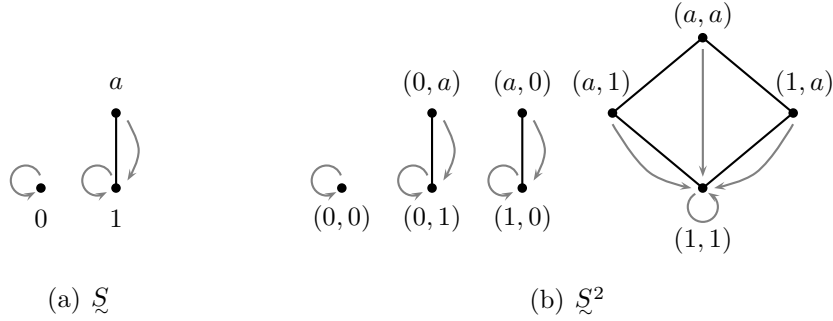

Let us fix some notation. Given a poset $P$, let $\max(P)$ and $\min(P)$ denote the set of 
maximal and minimal elements of $P$, respectively. Trivially, if  $P$ is finite, then $\max(P)$ and $\min(P)$ are non-empty.

\begin{lemma}\label{L:nontrivial-Stone}
   A finite Stone algebra belongs to $\cop{IS}(\F_{\cls{ST}})$ if and only if it is non-trivial. 
   
\end{lemma}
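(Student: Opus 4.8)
The plan is to prove the statement using the characterization established for bounded distributive lattices together with the natural duality for Stone algebras. Since a trivial algebra satisfies no non-negative clause but is excluded from $\cop{IS}(\F_{\cls{ST}})$ (as $\F_{\cls{ST}}$ is non-trivial and embeddings preserve non-triviality), the ``only if'' direction is immediate. The substance lies in the ``if'' direction: every non-trivial finite Stone algebra embeds into $\F_{\cls{ST}}$.

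For the ``if'' direction, I would work on the dual side using the duality $\XST \dashv \AST$ determined by $\spa{S}$. By Theorem~\ref{Th:FreeDual}, $\XST(\F_{\cls{ST}}(n)) \cong \spa{S}^n$, and since the duality is strong, $\alg{L} \in \cop{IS}(\F_{\cls{ST}})$ holds iff the dual structure $\XST(\alg{L})$ is a continuous (order-preserving, $d$-preserving) image of some $\spa{S}^n$. So the goal becomes: show that for any non-trivial finite Stone algebra $\alg{L}$, the dual space $\XST(\alg{L})$ is the image of some finite power $\spa{S}^n$ under a morphism of the dual category. The key structural observation is that $\spa{S}$ has a single non-trivial order edge $1 \leq_S a$ with $d$ collapsing the maximal point $a$ down to the point $1$ beneath it, so $\spa{S}^n$ consists of ``two-layer'' points where each coordinate is either a minimal/Boolean-type element or sits atop such an element. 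I expect that the dual of a finite Stone algebra always decomposes compatibly with this two-layer structure (the map $d$ sending each point to its unique minimal element below it), so that one can build a surjection from a sufficiently large $\spa{S}^n$.

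The cleanest route, mirroring the $\BDL$ argument in Lemma~\ref{Lemma_ISPU-BDL}, would be to first handle the ``projective'' case and then reduce the general non-trivial case to it. I would identify a condition on $\XST(\alg{L})$ guaranteeing it is an image of a finite power (analogous to the poset being bounded in the $\BDL$ case), verify that this condition is automatically met for every non-empty dual space of a finite Stone algebra, and then construct the required surjection explicitly. Concretely, I would relate $\XST(\alg{L})$ to its underlying Priestley space and exploit that a non-empty finite Priestley space equipped with the Stone map $d$ admits an order-and-$d$-preserving surjection from $\spa{S}^n$; this should follow because, after adjoining appropriate elements, one can realize the dual as a retract-type image exactly as was done for distributive lattices.

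The main obstacle I anticipate is verifying that the requisite surjection $\spa{S}^n \twoheadrightarrow \XST(\alg{L})$ both preserves the order and commutes with the unary operation $d$ simultaneously: the order part is analogous to the $\BDL$ situation, but the $d$-map imposes an extra coherence condition tying each point to the unique minimal element below it, and one must ensure the constructed map respects this everywhere. I expect this to be manageable precisely because $d$ in $\spa{S}$ realizes the ``retraction onto minimal elements'' structure, so a carefully chosen coordinatewise map will automatically intertwine with $d$; nonetheless, checking continuity and $d$-compatibility at the collapsed maximal points is the delicate step, and it is here that non-triviality of $\alg{L}$ (equivalently, non-emptiness of $\XST(\alg{L})$) is essential, since it guarantees the existence of the base-layer points needed to define the surjection.
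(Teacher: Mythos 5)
Your reduction is the right one, and it is exactly the paper's: by Theorem~\ref{Th:FreeDual} and strongness of the duality, it suffices to exhibit an onto morphism of structured spaces $\eta\colon\spa{S}^n\to\XST(\alg{L})$ for some $n$, since dual surjections correspond to embeddings into $\AST(\spa{S}^n)\cong\F_{\cls{ST}}(n)$. The genuine gap is that you never construct such a map: the paragraph where you ``expect'' a carefully chosen coordinatewise map to intertwine with $d$ is precisely where the proof has to happen, and that construction is the entire mathematical content of the lemma. The paper does it as follows. Write $\XST(\alg{A})=\{x_1,\ldots,x_k\}$ and choose $n$ with $k\leq 2^n-1$. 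Observe that $\max(\spa{S}^n)=\{0,a\}^n$ has $2^n$ elements, of which only $(0,\ldots,0)$ is a fixpoint of $d$, and that each point of $\spa{S}^n$ lies below a \emph{unique} maximal element (replace each coordinate equal to $1$ by $a$). Enumerate the maximal elements as $m_1,\ldots,m_{2^n}$ with $m_{2^n}=(0,\ldots,0)$, fix a $d$-fixpoint $z$ in $\XST(\alg{A})$ (e.g.\ $z=d(x_1)$; this is where non-triviality, i.e.\ non-emptiness of the dual space, is used), and set $\eta(y)=x_i$ if $y=m_i$ with $i\leq k$, $\eta(y)=d(x_i)$ if $y<m_i$ with $i\leq k$, and $\eta(y)=z$ otherwise. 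Well-definedness rests on the uniqueness of the maximal element above each point; monotonicity, $d$-preservation, and surjectivity are then direct checks. Without this (or an equivalent) explicit construction, your argument remains a plan rather than a proof.

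Two further points. First, the detour you propose through a ``projective case'', mirroring Lemma~\ref{Lemma_ISPU-BDL}, buys nothing here: the $\BDL$ argument leaned on the cited characterization of finite projective distributive lattices (dual poset a lattice), and no analogous characterization of projective Stone algebras is available in the paper. Coarsening the order of $\XST(\alg{L})$ to $x\leq' y$ iff $x=y$ or $x=d(y)$ does yield a legitimate Stone dual space mapped onto $\XST(\alg{L})$ by the identity, but you would then still have to build a surjection from some $\spa{S}^n$ onto the coarsened space, which is the same work as above. Second, a small slip in your ``only if'' direction: ``embeddings preserve non-triviality'' is not the reason trivial algebras are excluded (for lattices, say, trivial algebras do embed into free ones); the correct reason is that the language contains the constants $\bot,\top$, which are distinct in $\F_{\cls{ST}}$, so every subalgebra of $\F_{\cls{ST}}$ has at least two elements and hence no trivial algebra lies in $\cop{IS}(\F_{\cls{ST}})$.
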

\begin{proof}
Since $\F_{\cls{ST}}$ is non-trivial, $\top\neq\bot$ in $\F_{\cls{ST}}$ and each member of 
$\cop{IS}(\F_{\cls{ST}})$ must contain at least two elements. This proves the left to right implication. 

For the converse, first observe that $\max(\spa{S}^{n})=\{0,a\}^n$. So $\max(\spa{S}^{n})$ has $2^{n}$ elements 
and only $(0,\ldots,0)$ among those elements satisfies $d(x)=x$. Let $\alg{A}$ be a finite non-trivial Stone algebra
and let $\XST(\alg{A})=(\{x_1,\ldots,x_k\},\leq,d,\mathcal{P}(\{x_1,\ldots,x_k\}))$ be its dual space. Fix $n$ such that $k\leq 2^{n}-1$, and 
let $\{m_1,\ldots,m_{2^{n}}\}$ be an enumeration of the maximal elements of  $\spa{S}^{n}$, such that $m_{2^{n}}=(0,\ldots,0)$. 
Fix $z\in \XST(\alg{A})$ such that $d(z)=z$, and let $\eta\colon\spa{S}^{n}\to \XST(\alg{A})$ be defined as follows:
$$
\eta(y)=
\begin{cases}
 x_i		& {\rm if } \ y=m_i \ {\rm for \ some } \ i\in\{1,\ldots,k\}\\
 d(x_i)	& {\rm if } \ y<m_i \ {\rm for \ some } \ i\in\{1,\ldots,k\}\\
 z		& {\rm otherwise.}
\end{cases}
$$
 Then $\eta$ is a monotone onto map that preserves $d$. Hence 
 $\AST(\eta)\circ e_{\alg{A}}\colon \alg{A}\to \AST(\spa{S}^{n})$ is an embedding. 
 Since $\AST(\spa{S}^{n})$ is isomorphic to $\F_{\cls{ST}}(n)$, the result follows. 
\end{proof}

We then obtain the following:

\begin{theorem}
  The variety of Stone algebras is non-negative universally complete but not universally complete, and 
  $\{\eqref{Eq:nontrivial}\}$ is a basis for the admissible clauses of this variety.
\end{theorem}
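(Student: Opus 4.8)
The plan is to combine the structural results already assembled with the specific fact about Stone algebras established in Lemma~\ref{L:nontrivial-Stone}. The theorem makes three assertions: that $\cls{ST}$ is non-negative universally complete, that it is not universally complete, and that $\{\eqref{Eq:nontrivial}\}$ is a basis for its admissible clauses. I would handle each via the characterizations developed in Section~\ref{s:admissibility}, treating Lemma~\ref{L:nontrivial-Stone} as the crucial input that does all the work specific to Stone algebras.

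First I would establish non-negative universal completeness using Proposition~\ref{p:suniversalchar}, specifically the equivalence of (i) and (iii): $\cls{ST}$ is non-negative universally complete iff every non-trivial algebra of $\cls{ST}$ lies in $\Ug(\F_{\cls{ST}})$. Since $\cls{ST} = \cop{ISP}(\alg{S})$ is generated by a finite algebra, it is locally finite, so by Lemma~\ref{l:localfiniteembeddings}(a) membership of an algebra $\alg{A}$ in $\Ug(\F_{\cls{ST}})$ reduces to every finite subalgebra of $\alg{A}$ lying in $\cop{IS}(\F_{\cls{ST}})$. A subalgebra of a non-trivial Stone algebra need not itself be non-trivial in general, but every Stone algebra contains the two-element Boolean subalgebra $\{\bot,\top\}$, and any non-trivial finite subalgebra is non-trivial; by Lemma~\ref{L:nontrivial-Stone} every non-trivial finite Stone algebra is in $\cop{IS}(\F_{\cls{ST}})$, while the trivial one-element algebra is vacuously handled since a non-trivial $\alg{A}$ has at least one non-trivial finite subalgebra and all its finite subalgebras containing more than one element are non-trivial. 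Thus every non-trivial $\alg{A} \in \cls{ST}$ is in $\Ug(\F_{\cls{ST}})$, giving condition (iii).

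Next I would show $\cls{ST}$ is not universally complete. The natural witness is the negative clause \eqref{Eq:nontrivial}, namely $\top \eq \bot \imp \emptyset$, which asserts non-triviality. This clause is not valid in $\cls{ST}$ because $\cls{ST}$ contains the trivial algebra, yet it is admissible: by Theorem~\ref{t:EqAdm} (the equivalence of (i) and (ii)), admissibility amounts to $\top \eq \bot \mdl{\F_{\cls{ST}}} \emptyset$, which holds precisely because $\F_{\cls{ST}}$ is non-trivial and so never satisfies $\top \eq \bot$. Hence admissibility and validity diverge for clauses, so $\cls{ST}$ is not universally complete. This same observation, invoking the final remark after Proposition~\ref{p:suniversalchar}, immediately yields the basis claim: since $\cls{ST}$ is non-negative universally complete but not universally complete, any negative clause not satisfied by the trivial algebra forms a basis for the admissible clauses, and \eqref{Eq:nontrivial} is exactly such a clause.

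The main obstacle, though a mild one, is the bookkeeping in the first step: I must confirm that reducing to finite subalgebras via local finiteness interacts correctly with the non-triviality hypothesis, ensuring that Lemma~\ref{L:nontrivial-Stone} applies to every relevant finite subalgebra. The subtlety is that condition (iii) of Proposition~\ref{p:suniversalchar} quantifies over non-trivial algebras $\alg{A}$ of $\cls{ST}$, whereas Lemma~\ref{l:localfiniteembeddings}(a) speaks of \emph{all} finite subalgebras, including the trivial one-element subalgebra that sits inside any algebra. I would resolve this cleanly by noting that the one-element algebra does embed into $\F_{\cls{ST}}$ (as the image of a constant, or simply because $\top = \top$), so the trivial finite subalgebra causes no difficulty, and all other finite subalgebras are non-trivial and covered directly by Lemma~\ref{L:nontrivial-Stone}. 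Everything else is a direct citation of the general machinery already proved.
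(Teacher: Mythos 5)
Your overall route coincides with the paper's: Proposition~\ref{p:suniversalchar} plus Lemma~\ref{l:localfiniteembeddings}(a) reduce non-negative universal completeness to showing that every finite non-trivial Stone algebra lies in $\cop{IS}(\F_{\cls{ST}})$, which is exactly Lemma~\ref{L:nontrivial-Stone}; and the failure of universal completeness together with the basis claim follow from \eqref{Eq:nontrivial} being admissible (it is vacuously valid in the non-trivial algebra $\F_{\cls{ST}}$, since homomorphisms preserve the constants) but false in the trivial algebra, exactly as in the remark following Proposition~\ref{p:suniversalchar}. So the decomposition and the key inputs are the paper's.

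There is, however, one genuinely false step: your claim that the one-element Stone algebra embeds into $\F_{\cls{ST}}$ ``as the image of a constant, or simply because $\top = \top$''. It does not. The signature of Stone algebras contains the nullary operations $\bot$ and $\top$; any homomorphism must preserve them, and in the trivial algebra $\bot = \top$, so a homomorphism into $\F_{\cls{ST}}$ would force $\bot = \top$ there, which is false. (This is precisely the forward direction of Lemma~\ref{L:nontrivial-Stone}: members of $\cop{IS}(\F_{\cls{ST}})$ must be non-trivial. Moreover, if the trivial algebra did embed into $\F_{\cls{ST}}$, the theorem's second assertion would be undermined, since then every algebra in $\cls{ST}$ would lie in $\Ug(\F_{\cls{ST}})$.) Fortunately, the case you invented this claim to handle never occurs, and for the same reason: every subalgebra of a Stone algebra contains $\bot$ and $\top$ by closure under the nullary operations, so \emph{every} subalgebra of a non-trivial Stone algebra is itself non-trivial --- your opening assertion that ``a subalgebra of a non-trivial Stone algebra need not itself be non-trivial'' is also wrong. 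Hence all finite subalgebras of a non-trivial $\alg{A} \in \cls{ST}$ are non-trivial, Lemma~\ref{L:nontrivial-Stone} applies to each of them, and Lemma~\ref{l:localfiniteembeddings}(a) yields $\alg{A} \in \Ug(\F_{\cls{ST}})$. With that correction, your argument is complete and is essentially the paper's proof.
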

\begin{proof}
To prove that the variety $\cls{ST}$ of Stone algebras is non-negative universally complete, it suffices, by  
Proposition~\ref{p:suniversalchar}, to check that every non-trivial Stone algebra $\alg{A}$ is in $\Ug(\F_{\cls{ST}})$.  
This is true when $\alg{A}$ is finite by  Lemma~\ref{L:nontrivial-Stone}, and hence true in general by 
 Lemma~\ref{l:localfiniteembeddings}. $\cls{ST}$  fails to be universally complete since $\eqref{Eq:nontrivial}$ is 
 admissible but not  satisfied by trivial Stone algebras, and this clause therefore provides a basis for the admissible clauses 
 of the variety. 
 \end{proof}


\section{De~Morgan Algebras and Lattices}\label{s:demorgan}

A \emph{De~Morgan lattice} $(A,\wedge,\vee,\neg)$ is a distributive lattice
equipped with an additional unary operation $\neg$ that is involutive ($a=\neg \neg a$ for 
all $a \in A$) and satisfies the De~Morgan law $\lnot (a \wedge b) = \neg a \vee \neg b$ for all $a,b \in A$. 
A \emph{De~Morgan algebra} $(A,\wedge,\vee,\neg,\bot,\top)$ consists of a bounded De~Morgan lattice with 
additional constants $\bot$ and $\top$ for the bottom and top elements of the lattice, respectively. 

Our aim in this section is to provide bases for the admissible clauses and quasi-identities of the 
varieties $\cls{DMA}$ of De~Morgan algebras and $\cls{DML}$ of De~Morgan lattices. 
More precisely, consider the following clauses, recalling that $\f \leqn \p$ stands for 
$\f \wedge \p \eq \f$:
\begin{align}
\nonumber x\vee y \eq \top 					& \quad \imp \quad  x\eq \top ,\  y\eq \top \tag{\ref{Eq:1prime}}\\
 x\eq\lnot x										&  \quad\imp\quad\emptyset	\label{Eq:DMnontrivial}\\
x\eq\neg x 										& \quad \imp\quad x\eq y \label{deMorganrule}\\
x\leqn \neg x, \ \neg(x\vee y)\leqn x\vee y, \ \neg y \vee z\eq\top	
													& \quad \imp \quad z\eq\top\label{Eq_DeM1}\\
x\leqn \neg x, \ y\leqn \neg y, \ x\wedge y\eq \bot			
													& \quad \imp \quad x\vee y\leqn \neg(x\vee y)\label{Eq_DeM2}.
\end{align} 
We prove the following:

\begin{itemize}

\item The clauses (\ref{Eq:1prime}) and (\ref{Eq:DMnontrivial}) form a basis for the admissible clauses of De~Morgan algebras 
		(Theorem~\ref{t:BasisDeMorgan}). 
		
\item The quasi-identities (\ref{Eq_DeM1}) and (\ref{Eq_DeM2}) form a basis for the admissible quasi-identities of De~Morgan algebras (Theorem~\ref{t:BasisDeMorganQ}).

\item	The clause  (\ref{Eq:DMnontrivial})  forms a basis for the admissible clauses of De~Morgan lattices (Theorem~\ref{t:DeMorganLattices}).

\item The quasi-identity  (\ref{deMorganrule})  forms a basis for the admissible quasi-identities of De~Morgan lattices (Theorem~\ref{t:DeMorganLattices}).
\end{itemize}

\noindent
In~\cite{MR12} it is shown that (\ref{deMorganrule}) forms a basis for the admissible quasi-identities 
of De~Morgan lattices, and that a quasi-identity is admissible in De~Morgan algebras 
iff it is valid in all De~Morgan algebras satisfying (\ref{Eq:1prime}) and (\ref{deMorganrule}). 
 This proof makes use of a description of the  finite lattice of quasivarieties of De Morgan lattices 
 provided by Pynko in~\cite{Pyn99}. However, it is  the dual perspective on De~Morgan algebras presented below 
that allows us to  obtain a comprehensive description of admissibility in these varieties. 


\subsection{A Natural Duality for De~Morgan Algebras}

We obtain bases for the admissible clauses and quasi-identities of De Morgan algebras (and, with a little extra work, also for 
De Morgan lattices) by characterizing the dual spaces of the algebras in $\cop{IS}(\F_{\cls{DMA}})$ and 
$\cop{ISP}(\F_{\cls{DMA}})$ according to the following natural duality.

Consider first the De~Morgan algebra $\alg{D}=(\{0,a,b,1\},\wedge,\vee,\neg,0,1)$ with an order given by the 
Hasse diagram in Figure~\ref{Fig:Demorgan}(a) and $\neg 0=1$, $\neg 1=0$, $\neg a=a$, and $\neg b=b$.
Then  $\cls{DMA} = \Qg(\alg{D})$ (see~\cite{Kal58}). 
Now let $\spa{D}=(\{0,a,b,1\},\leq_D,f,\mathcal{P}(\{0,a,b,1\}))$ where $\leq_D$ is described by the Hasse diagram in Figure~\ref{Fig:Demorgan}(b) and $f(0)=0$, $f(1)=1$, $f(a)=b$, and $f(b)=a$. 

A structure $( X,\leq_X,f_X, \tau_X) $ is called a {\em De Morgan space} if
\begin{itemize}
\item $( X,\leq_X, \tau_X)$ is a Priestley space; 
\item $f_X$ is an involutive order-reversing homeomorphism.
\end{itemize}

\begin{figure}
\begin{center}
\begin{pspicture}(0,0)(9,5)
\psdots(2,1)(1,2)(2,3)(3,2)
\psline(2,1)(1,2)(2,3)(3,2)(2,1)
\psline[linecolor=gray,arrows=<->](2,1.15)(2,2.85)
\psarc[linecolor=gray,arrows=->](0.7,2){.3}{30}{330}
\psarc[linecolor=gray,arrows=<-](3.3,2){.3}{210}{510}
\rput[c](2,.7){$0$}
\rput[c](0.8,2){$a$}
\rput[c](3.2,2){$b$}
\rput[c](2,3.3){$1$}
\rput[c](2,0.1){(a) $\alg{D}$}
\psdots(7,1)(6,2)(7,3)(8,2)
\psline(7,1)(6,2)(7,3)(8,2)(7,1)
\psline[linecolor=gray,arrows=<->](7,1.15)(7,2.85)
\psarc[linecolor=gray,arrows=->](5.7,2){.3}{30}{330}
\psarc[linecolor=gray,arrows=<-](8.3,2){.3}{210}{510}
\rput[c](7,.7){$a$}
\rput[c](5.8,2){$0$}
\rput[c](8.2,2){$1$}
\rput[c](7,3.3){$b$}
\rput[c](7,0.1){(b) $\spa{D}$}
\end{pspicture}
\caption{The De Morgan algebra $\alg{D}$ and the space $\spa{D}$}\label{Fig:Demorgan}
\end{center}
\end{figure}

\begin{theorem}[{\cite[Theorem~4.3.16]{CD98}}] \label{Th:DM_Duality}
 The structure $\spa{D}$ determines a strong natural duality between the variety of De~Morgan algebras and 
 the category of De Morgan spaces. 
 \end{theorem}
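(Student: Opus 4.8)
The plan is to establish the result inside the general framework of natural dualities recalled in Section~\ref{s:preliminaries}, taking advantage of the fact that $\alg{D}$ possesses a near-unanimity term. Indeed, the lattice reduct of every De~Morgan algebra furnishes the ternary median term $m(x,y,z)=(x\wedge y)\vee(y\wedge z)\vee(z\wedge x)$, which is a majority (hence near-unanimity) term for $\alg{D}$. By the near-unanimity duality theorem of~\cite{CD98}, the alter ego of $\alg{D}$ carrying the discrete topology together with \emph{all} binary compatible relations (that is, all subalgebras of $\alg{D}^2$) yields a duality on $\cls{DMA}=\cop{ISP}(\alg{D})$. The task then splits into three parts: showing that the economical structure $\spa{D}$ recovers this relational structure, identifying the resulting dual category as the De~Morgan spaces, and upgrading the duality to a strong one.

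For the first part I would begin with a routine finite check that $\spa{D}$ is algebraic over $\alg{D}$: the relation $\leq_D$ is the universe of a subalgebra of $\alg{D}^2$, and $f$ -- being the De~Morgan automorphism of $\alg{D}$ that interchanges $a$ and $b$ while fixing $0$ and $1$ -- has its graph a subalgebra of $\alg{D}^2$ and is an involutive, $\leq_D$-order-reversing map. I would then carry out the \emph{reduction} step, verifying that $\{\leq_D,f\}$ entails every subalgebra of $\alg{D}^2$, so that by the entailment theorem these two items alone already dualize $\alg{D}$. To pin down the dual category, observe that discarding $f$ leaves exactly the Priestley-space structure that dualizes the bounded distributive lattice reduct of $\alg{D}$; reinstating $f$, an involutive order-reversing homeomorphism on a Priestley space corresponds on the algebraic side to a dual lattice automorphism squaring to the identity, namely the De~Morgan negation $\neg$. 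Hence products and closed substructures of $\spa{D}$ are precisely the De~Morgan spaces, and conversely every De~Morgan space arises as such a substructure by combining the Priestley representation of its order-topological reduct with the compatibility of its involution.

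It remains to promote the full duality to a strong one, which amounts to checking that $\spa{D}$ is injective in the topological quasivariety $\cop{IS}_c\cop{P}(\spa{D})$ that it generates; since $\spa{D}$ is a \emph{total} structure, no complications from partial operations arise, and the verification is again finite, taking place at the level of $\spa{D}$ and its substructures. Once strongness is secured, the general properties recalled after Theorem~\ref{Th:FreeDual} guarantee that the duality is full and that the functors $\XDM$ and $\ADM$ interchange embeddings with surjections -- exactly the features exploited in the sequel. I expect the real work to concentrate in two places: the entailment/reduction step, where one must confirm that the single order relation $\leq_D$ and the single operation $f$ genuinely entail all of the subalgebras of $\alg{D}^2$, and the injectivity verification underpinning strongness, this last being the typical sticking point in passing from a merely full duality to a strong one.
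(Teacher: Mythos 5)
You cannot be checked against an internal argument here, because the paper does not prove Theorem~\ref{Th:DM_Duality} at all: the bracketed reference to \cite[Theorem~4.3.16]{CD98} in the theorem header is the entire justification, so the only meaningful comparison is with the proof in the cited source. Judged on that basis, your strategy is a legitimate and essentially standard one, and the parts you call routine do check out: $\leq_D$ and the graph of $f$ are indeed subalgebras of $\alg{D}^2$ (with $f$ an automorphism of $\alg{D}$), the median term makes the NU Duality Theorem applicable, and entailment of all binary compatible relations from $\{\leq_D,f\}$ would indeed transfer the duality to $\spa{D}$. The derivation behind \cite[Theorem~4.3.16]{CD98} is packaged differently, however: the duality (going back to Cornish and Fowler) is obtained there as an instance of the piggyback method of Davey and Werner \cite{DW85}, piggybacking on Priestley duality for the bounded-distributive-lattice reduct of $\alg{D}$; the piggyback strong-duality machinery produces the economical alter ego $(\leq_D,f)$ \emph{and} its strongness in one stroke. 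What that packaging buys is precisely the avoidance of the two hand verifications your plan defers; what your route buys is independence from the piggyback apparatus, at the price of doing those verifications explicitly.

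Two concrete cautions about the deferred steps, which is where all the content of your proposal sits. First, the identification of the dual category is stated too loosely: the nontrivial half is that \emph{every} De~Morgan space $(X,\leq,g,\tau)$ embeds as a closed substructure of a power of $\spa{D}$, and this needs the Cornish--Fowler separation argument, i.e.\ for each clopen up-set $U$ of $X$ one constructs a morphism $X\to\spa{D}$ whose value at $x$ is determined by whether $x\in U$ and whether $g(x)\in U$, and one checks these maps are monotone, commute with the involutions, separate points, and reflect the order. Second, and more seriously, your claim that the strongness verification ``is again finite'' is not justified as stated: injectivity of $\spa{D}$ in the topological quasivariety it generates quantifies over morphisms defined on arbitrary closed substructures of arbitrary, possibly infinite, powers of $\spa{D}$, so it is not prima facie a finite check. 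Reducing it to a finite one requires invoking further theory---for instance the criteria of \cite{CD98} that bring strongness down to conditions checkable at the finite level, or again the piggyback strong duality theorem. As written, that step is the one genuine gap in your plan rather than a routine computation.
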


\noindent
An application of Theorem~\ref{Th:FreeDual} then establishes that for each $n \in \mathbb{N}$, the structure 
$\ADM(\spa{D}^n)$ is the free $n$-generated De~Morgan algebra.

The following lemma provides examples of (isomorphic copies of) subalgebras of $\F_{\DM}$ that will play an important role in the remainder of this section. 

\begin{lemma}\label{Lem_demorgansub}
Let $X$ be a finite set and $f$ an involution on $X$ with at least one fixpoint ($x \in X$ with $f(x) = x$). 
Consider $Y=\{u,v\}\cup X$ where $u \not = v$ and $u,v\notin X$, the map $\bar{f}\colon Y\to Y$ extending $f$ 
with  $f(u)=v$ and $f(v)=u$, and the partial order $\le$ on $Y$ defined by 
$$
x\leq y \qquad \Leftrightarrow \qquad x=u \quad {\rm or} \quad  y=v \quad {\rm or} \quad x=y.
$$
Then $\ADM(Y,\leq, \bar{f},\mathcal{P}(Y)) \in \cop{IS}(\F_{\DM})$.
\end{lemma}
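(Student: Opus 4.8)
The plan is to exhibit the structure $(Y,\leq,\bar f,\mathcal{P}(Y))$ as a De~Morgan space and then realize it explicitly as a morphic image of some finite power $\spa{D}^n$, so that the duality turns the corresponding surjection into the required embedding of $\ADM(Y,\leq,\bar f,\mathcal{P}(Y))$ into a free De~Morgan algebra. This mirrors exactly the strategy already used in the proof of Lemma~\ref{Lemma_ISPU-BDL} and Lemma~\ref{L:nontrivial-Stone}: first check that the given combinatorial object is a legitimate dual object, then construct an onto continuous structure-preserving map from a finite power of the dual generator onto it, and finally invoke strongness (so that surjections in the dual category correspond to embeddings in $\cls{DMA}$) together with Theorem~\ref{Th:FreeDual} (so that $\ADM(\spa{D}^n)$ is free $n$-generated).

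First I would verify that $(Y,\leq,\bar f,\mathcal{P}(Y))$ is a De~Morgan space. With the discrete topology on the finite set $Y$, the space is trivially a Priestley space once $\leq$ is a partial order, which is immediate from the stated definition (the only nontrivial comparabilities are $u$ below everything and $v$ above everything). I must then check that $\bar f$ is an involution and order-reversing: it is involutive since $f$ is and since it swaps $u,v$; and it reverses order because $u\le y$ becomes $\bar f(u)=v\ge \bar f(y)$, while $x\le v$ becomes $\bar f(x)\ge \bar f(v)=u$, with the diagonal case $x=y$ trivial. So $(Y,\leq,\bar f,\mathcal{P}(Y))$ is a De~Morgan space and $\ADM(Y,\leq,\bar f,\mathcal{P}(Y))\in\cls{DMA}$.

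The heart of the argument, and the step I expect to be the main obstacle, is the construction of a surjective De~Morgan space morphism $\eta\colon\spa{D}^n\to(Y,\leq,\bar f,\mathcal{P}(Y))$ for suitably chosen $n$. Recall that in $\spa{D}$ the points $a,b$ are the two fixpoints of $f$ sitting strictly between the bottom $0$ and the top $1$ (using the order of Figure~\ref{Fig:Demorgan}(b)), with $0\le a,b\le 1$ and $f$ swapping $a\leftrightarrow b$, $0\leftrightarrow 0$-type behaviour fixing the extremes. The natural idea is to send the all-bottom tuple of $\spa{D}^n$ to $u$ and the all-top tuple to $v$ (forcing $\eta(u$-image$)\le$ everything and everything $\le\eta(v$-image$)$, matching the order on $Y$), and to use the fixpoints $a,b$ of $f$ inside $\spa{D}$ to encode the fixpoints and swapped pairs of $f$ on $X$. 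Concretely, since $\bar f$ restricted to $X$ is an involution possessing at least one fixpoint, I would pick $n$ large enough to index the $\bar f$-orbits of $X$ and define $\eta$ on the relevant ``middle'' tuples of $\spa{D}^n$ so that it is onto $X$, is monotone (all of $X$ lands in the antichain strictly between the images of $u$ and $v$), and commutes with the involutions, using the single fixpoint of $f|_X$ to absorb the constraint that $a,b$ are swapped by $f$ in $\spa{D}$; the hypothesis that $f|_X$ has a fixpoint is what makes this parity matching possible. Checking that $\eta$ is genuinely order-preserving and $f$-commuting, and that every point of $Y$ is hit, is the delicate bookkeeping.

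Once such an $\eta$ is in hand, I would finish exactly as in the earlier lemmas: since $\spa{D}$ yields a \emph{strong} duality (Theorem~\ref{Th:DM_Duality}), the onto morphism $\eta$ dualizes to an embedding $\ADM(\eta)\colon \ADM(Y,\leq,\bar f,\mathcal{P}(Y))\to\ADM(\spa{D}^n)$, and composing with the isomorphism $e$ where needed, the one-to-one homomorphism $\ADM(\eta)\circ e_{\ADM(Y,\dots)}$ (or simply $\ADM(\eta)$ itself, as $e$ is an isomorphism) witnesses membership in $\cop{IS}(\ADM(\spa{D}^n))$. By Theorem~\ref{Th:FreeDual}, $\ADM(\spa{D}^n)\cong\F_{\DM}(n)\in\cop{IS}(\F_{\DM})$, and therefore $\ADM(Y,\leq,\bar f,\mathcal{P}(Y))\in\cop{IS}(\F_{\DM})$, as required.
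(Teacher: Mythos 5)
Your high-level strategy is exactly the paper's: check that $(Y,\leq,\bar f,\mathcal{P}(Y))$ is a De~Morgan space, exhibit it as the image of a finite power of $\spa{D}$ under a morphism of De~Morgan spaces, and then use strongness of the duality (Theorem~\ref{Th:DM_Duality}) together with Theorem~\ref{Th:FreeDual} to turn that surjection into an embedding of $\ADM(Y,\leq,\bar f,\mathcal{P}(Y))$ into a finitely generated free De~Morgan algebra. The final dualization step and the verification that $Y$ is a De~Morgan space are fine. However, the entire content of the lemma is the construction of the onto morphism, and this is precisely what you leave as ``delicate bookkeeping''; moreover, the sketch you give of it would not work. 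First, you have conflated the algebra $\alg{D}$ with the space $\spa{D}$: in $\spa{D}$ the order is $a\leq_D 0,1\leq_D b$ and $f$ \emph{swaps} $a$ and $b$ while \emph{fixing} $0$ and $1$, so your statement that ``$a,b$ are the two fixpoints of $f$ sitting strictly between the bottom $0$ and the top $1$'' is internally inconsistent, and your proposed encoding of the $\bar f$-orbits of $X$ via ``the fixpoints $a,b$'' rests on this confusion.

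Second, and more substantively, sending only the all-bottom tuple to $u$ and the all-top tuple to $v$ cannot be completed to a surjective monotone map once $|X|\geq 2$. Since $X$ is an antichain in $Y$, monotonicity forces any two comparable tuples whose images lie in $X$ to have the same image; and the comparability graph on the non-extreme tuples of $\spa{D}^n$ is connected for $n\geq 2$ (e.g.\ $(0,1)\geq(a,1)\leq(1,1)\geq(1,a)\leq(1,0)$), so all non-extreme tuples would be forced onto a single point of $X$. One therefore needs the preimages of $u$ and $v$ to be large enough to separate the preimages of distinct points of $X$ from one another. This is the actual key idea of the paper's proof: working in $\spa{D}^{n+2}$, every tuple with more $a$'s than $b$'s is sent to $u$ and every tuple with more $b$'s than $a$'s is sent to $v$; the remaining ``balanced'' tuples then form an antichain, on which one sets $\eta(e_i,a,b)=x_i$ and $\eta(e_i,b,a)=f(x_i)$ (the two extra coordinates carrying the involution, with $e_i$ the $i$-th Kronecker tuple in $\{0,1\}^n$), and $\eta=x_0$ otherwise, where $x_0$ is the fixpoint of $f$. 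This default value is exactly where the fixpoint hypothesis enters, since the ``otherwise'' case is stable under the coordinatewise involution and so its image must itself be a fixpoint. Without some such device there is no way to meet the monotonicity, equivariance, and surjectivity requirements simultaneously, so your proposal has a genuine gap at its central step.
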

\begin{proof}
Let $\{x_0,x_1,\ldots,x_n\}$  be an enumeration of the elements of $X$ such that $f(x_0)=x_0$.
For each $i \in\{1,\ldots,n\}$, let $e_i=(\delta_{i1},\ldots,\delta_{in})\in \spa{D}^{n}$ where $\delta_{ij}$ denotes the Kronecker delta; 
that is,  $\delta_{ij}$ is $1$ if $i=j$ and $0$ otherwise.

Now we define $\eta\colon\spa{D}^{n+2}\to Y$ as follows:
$$
\eta(z_1,\ldots,z_{n+2})=
\begin{cases}
v	& {\rm if } \ |\{i\mid z_i=a\}|<|\{i\mid z_i=b\}|\\
u	& {\rm if } \ |\{i\mid z_i=a\}|>|\{i\mid z_i=b\}|\\
x_i	& {\rm if } \ (z_1,\ldots,z_{n+2})=(e_i,a,b)\\
\bar{f}(x_i)		& {\rm if } \ (z_1,\ldots,z_{n+2})=(e_i,b,a)\\
x_0 & {\rm otherwise.}
\end{cases}
$$ 
It is a tedious but straightforward calculation to check that $\eta$ is a well-defined monotone map that preserves the involution. 

Since $\eta$ is onto, by Theorem~\ref{Th:DM_Duality}, $\ADM(\eta)\colon\ADM( Y,\leq, \bar{f},\mathcal{P}(Y))\to \ADM(\spa{D}^{n})$ 
is a one-to-one homomorphism. The result then follows by Theorem~\ref{Th:FreeDual}. 
\end{proof}

\noindent
We remark that the algebra $\ADM( Y,\leq, \bar{f},\mathcal{P}(Y))$ defined in this lemma not only embeds into $\F_{\DM}$, it 
is also a retract of $\F_{\DM}$. A characterization of finite retracts of free De~Morgan algebras, the finite projective algebras of this variety, 
is given in~\cite{BC201X}.


\subsection{Admissible Clauses in De~Morgan Algebras}

Our goal now is to describe the finite members of $\cop{IS}(\F_{\DM})$ via a characterization of their 
counterparts in the dual space (Lemma~\ref{Lem_ISPUdemorgan}), thereby obtaining a basis for the admissible clauses of 
$\cls{DMA}$ (Theorem~\ref{t:BasisDeMorgan}). 
 We begin by proving two preparatory lemmas.

\begin{lemma}\label{Lem:idempotent}
Let $\alg{A}$ be a finite De Morgan algebra and let $\XDM(\alg{A})=(X,\leq,f,\tau)$ be  its dual space. 
Then the following are equivalent:
\begin{itemize}
 \item[{\rm (i)}]  $\alg{A}$ satisfies \eqref{Eq:DMnontrivial}.
 \item[{\rm (ii)}] There exists $x\in X$ such that $f(x)=x$.
\end{itemize}
\end{lemma}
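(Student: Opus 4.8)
The plan is to transport both conditions to the dual De~Morgan space $\XDM(\alg{A})=(X,\le,f,\tau)$ and reduce the statement to a purely combinatorial fact about a finite poset carrying an order-reversing involution. Since $\alg{A}$ is finite, $X$ is finite and $\tau$ is discrete, so by Theorem~\ref{Th:DM_Duality} the evaluation map $e_{\alg{A}}\colon\alg{A}\to\ADM(X)$ is an isomorphism and each $c\in\alg{A}$ may be viewed as a morphism $g\colon X\to\spa{D}$, with $\neg$ computed pointwise by $\neg_{\alg{D}}$. First I would record two reformulations. On the one hand, a negative clause $\Si\imp\emptyset$ holds in $\alg{A}$ exactly when no assignment realises every identity in $\Si$, so $\alg{A}$ satisfies \eqref{Eq:DMnontrivial} precisely when $\neg$ has no fixpoint in $A$. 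On the other hand, the fixpoints of $\neg_{\alg{D}}$ are $a,b$, and $\{a,b\}$ is the two-element chain of $\spa{D}$ (with $a$ the bottom and $b$ the top) that $f$ interchanges; hence a morphism $g$ satisfies $\neg g=g$ iff it takes values in $\{a,b\}$, equivalently iff $U:=g^{-1}(b)$ is an upset of $X$ satisfying
\[
x\in U \quad\Longleftrightarrow\quad f(x)\notin U \qquad \text{for all } x\in X .
\]
Thus condition (i) is equivalent to the non-existence of such a \emph{self-complementary} upset $U$, while (ii) asserts that $f$ has a fixpoint.

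Granting these reformulations, the direction (ii) $\Rightarrow$ (i) is immediate: if $f(x)=x$ for some $x\in X$, then any self-complementary upset $U$ would force the impossible equivalence $x\in U\Leftrightarrow x\notin U$, so none exists and $\neg$ has no fixpoint.

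For (i) $\Rightarrow$ (ii) I would argue by contraposition, which isolates the combinatorial heart of the lemma: \emph{if $f$ is a fixpoint-free order-reversing involution on a finite poset $X$, then $X$ admits a self-complementary upset.} This I would prove by induction on $|X|$. Choose a minimal element $m$ of $X$; since $f$ is order-reversing and involutive, $f(m)$ is maximal, and $f(m)\neq m$ as $f$ is fixpoint-free. The set $X'=X\setminus\{m,f(m)\}$ is $f$-invariant, and $f$ restricts there to a fixpoint-free order-reversing involution, so the induction hypothesis yields a self-complementary upset $U'$ of $X'$; then $U=U'\cup\{f(m)\}$ is an upset of $X$ (adjoining the maximal element $f(m)$ and omitting the minimal element $m$ preserves upward closure) and is easily checked to be self-complementary. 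Pulling $U$ back through the duality produces $c\in\alg{A}$ with $\neg c=c$, so $\alg{A}$ fails \eqref{Eq:DMnontrivial}, completing the contrapositive.

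The routine points — that $e_{\alg{A}}$ is an isomorphism, that $\neg$ acts pointwise in $\ADM(X)$, and the verifications that $U$ is an upset and self-complementary — are direct. The one place that genuinely needs care, and the crux of the whole argument, is the inductive construction: it is essential that $f$ is order-\emph{reversing}, so that each minimal element is paired with a maximal one and its orbit can be split between the bottom and the top of the upset. For an arbitrary fixpoint-free involution the desired self-complementary upset need not exist, so it is precisely the interaction between the involution and the order that makes the equivalence go through.
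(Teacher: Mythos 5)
Your proof is correct, and it even handles the edge case of the trivial algebra (empty dual space, empty upset) uniformly via the contrapositive, which the paper treats by first noting that (i) forces $X\neq\emptyset$. Your overall strategy necessarily coincides with the paper's: both arguments identify negation fixpoints of $\alg{A}$ with morphisms $X\to\spa{D}$ taking values in $\{a,b\}$ (equivalently, your self-complementary upsets), and your proof of (ii) $\Rightarrow$ (i) is the paper's argument in different clothing (the paper evaluates a putative fixpoint $c=\neg c$ at a point $h$ with $f(h)=h$ and derives $h(c)\in\{0,1\}\cap\{a,b\}$). Where you genuinely diverge is the combinatorial core: the proof that a fixpoint-free order-reversing involution on a finite poset admits such a morphism. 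The paper gives a one-shot global construction: with $m(x)$ the maximal size of a chain lying below $x$, it splits $X$ into the downset $U=\{x : m(x)<m(f(x))\}$, the upset $V=\{x : m(f(x))<m(x)\}$, and the set $W$ of ties, shows $W$ is an $f$-invariant antichain, splits $W$ into $W_1\sqcup W_2$ with $f(W_1)=W_2$, and maps $U\cup W_1$ to $a$ and $V\cup W_2$ to $b$. Your induction --- peeling off a minimal element $m$ together with the maximal element $f(m)$ and adjoining $f(m)$ to the self-complementary upset obtained for $X\setminus\{m,f(m)\}$ --- replaces that rank-function argument by a shorter and arguably more transparent one; the trade-off is that the paper's partition is explicit and non-recursive, while yours needs the (easy) inductive bookkeeping that the restricted structure is again a fixpoint-free order-reversing involution and that upward closure survives the extension, all of which you verify or correctly flag as routine. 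Your closing remark that order-reversal is essential is also accurate: on the four-element chain $x_1<x_2<x_3<x_4$, the fixpoint-free involution swapping $x_1$ with $x_2$ and $x_3$ with $x_4$ admits no self-complementary upset.
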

\begin{proof}
(i) $\Rightarrow$ (ii). Suppose that  $\alg{A}$ satisfies \eqref{Eq:DMnontrivial}. Then $\alg{A}$ is non-trivial and 
$\cls{DMA}[\alg{A},\alg{D}]=X\neq\emptyset$. 
Assume now for a contradiction that $x\neq f(x)$ for each $x\in X$. We show 
that $\alg{A}$ must contain an element $c$ such that $\lnot c = c$,  contradicting \eqref{Eq:DMnontrivial}. 
Let us define for each $x\in X$:
\[
\begin{array}{rcl}
C(x) & = & \{C\subseteq X \mid C\mbox{ is a chain and }\forall y\in C, y\leq x \}\\[.05in]
m(x) & = & \max\{|C|\mid C\in C(x) \}.
\end{array}
\]
We consider the sets:
\[
\begin{array}{rcl}
U	& = & \{x\in X\mid m(x)<m(f(x))\}\\[.05in]
V	& = & \{x\in X\mid m(f(x))<m(x)\}\\[.05in]
W	& = & \{x\in X\mid m(x)=m(f(x))\}.
\end{array}
\]
Clearly $U$, $V$, and $W$ are pairwise disjoint and $X =U\cup V\cup W$. Moreover, $f(U)=V$, $f(V)=U$, and $f(W)=W$. 
Observe also that $U$ is a decreasing set. If $x\in U$ and $y\leq x$, then $f(x)\leq f(y)$, $C(y)\subseteq C(x)$, and $C(f(x))\subseteq C(f(y))$. 
Thus $m(y)\leq m(x)<m(f(x))\leq m(f(y))$ and $y \in U$. Also, if $x\in W$ and $y<x$, then $m(y)<m(x)=m(f(x))<m(f(y))$, so $y\in U$. 
A similar argument  establishes that $V$ is an increasing set and that $W$ is an anti-chain.

Now let $W_1, W_2\subseteq W$  be a partition of $W$ such that $f(W_1)=W_2$. The existence of such a partition is ensured by the 
fact that $f(x)\neq x$ for each $x \in X$. 
Consider the map $\eta\colon X \to \spa{D}$ defined by:
$$
\eta(x)=
\begin{cases}
  a & {\rm if} \ x\in U\cup W_1\\
  b & {\rm if} \ x\in V\cup W_2.
\end{cases}
$$
It follows that $\eta$ is well-defined and that $\neg\eta(x)=\eta(x)$ for each $x\in X$. Moreover, $\eta$ is increasing and preserves $f$. 
So $\eta\in\ADM(\XDM(\alg{A}))$. Let $c=e_{\alg{A}}^{-1}(\eta)$, so that $\neg c=c$.

(ii) $\Rightarrow$ (i). 
Suppose that there exists $h\in X=\DM[\alg{A},\alg{D}]$ such that $f(h)=h$. Now assume for a contradiction that 
there exists a homomorphism $g\colon \alg{Fm}_{\lang}\to \alg{A}$ satisfying $g(x)=g(\neg x)$. 
On one hand, $h(g(x))=f(h)(g(x))=f(h(g(x)))$. 
That is, $h(g(x))$ is a fixpoint of $f$ in $\spa{D}$. Therefore, $h(g(x))\in\{0,1\}\subseteq \spa{D}$. On the other hand, 
$h(g(x))=h(g(\neg x))=\neg(h(g(x)))$ implies that $h(g(x))$ is a fixpoint of $\neg$ in $\alg{D}$. 
So $h(g(x))\in\{a,b\}\subseteq \spa{D}$, a contradiction.
Hence such a $g$ does not exist and  \eqref{Eq:DMnontrivial} is satisfied. 
\end{proof}

\begin{lemma}\label{Lem_ISPUdemorgan}
 Let $\alg{A}$ be a finite De Morgan algebra  and let $\XDM(\alg{A})=(X,\leq,f,\tau)$ be its dual space. Then the following are equivalent:
\begin{itemize}
 \item[{\rm (i)}] $\alg{A}\in\cop{IS}(\F_{\DM})$.
 \item[{\rm (ii)}]  $( X,\leq)$ is non-empty and bounded and there exists $x\in X$ such that $f(x)=x$.
\item[{\rm (iii)}] $\alg{A}$ satisfies \eqref{Eq:1prime} and \eqref{Eq:DMnontrivial}.
 \end{itemize}
\end{lemma}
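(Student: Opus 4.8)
The plan is to establish the equivalences following the template of Lemma~\ref{Lemma_ISPU-BDL}, using the strong duality of Theorem~\ref{Th:DM_Duality} to pass between (i) and (ii), Lemma~\ref{Lem:idempotent} to handle the fixpoint condition, and Lemma~\ref{Lem_demorgansub} to realize the relevant algebras as subalgebras of $\F_{\DM}$. Throughout I would identify $\XDM(\F_{\DM}(n))$ with $\spa{D}^n$ (via Theorem~\ref{Th:FreeDual} and the duality), and use that $\spa{D}$ is bounded with bottom $a$ and top $b$ and carries an order-reversing involution $f$ with $f(a)=b$; hence $\spa{D}^n$ is bounded with bottom $(a,\dots,a)$ and top $(b,\dots,b)$ and has the $f$-fixpoint $(0,\dots,0)$. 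My route is (i) $\Leftrightarrow$ (ii) by duality, and (ii) $\Leftrightarrow$ (iii) by matching boundedness and the fixpoint with \eqref{Eq:1prime} and \eqref{Eq:DMnontrivial}.

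For (i) $\Rightarrow$ (ii): since $\alg{A}$ is finite it embeds into some $\F_{\DM}(n)$, and as the duality is strong this embedding dualizes to an onto morphism $\spa{D}^n \to \XDM(\alg{A})$, which in particular is order-preserving and preserves $f$. An onto order-preserving map sends top to top and bottom to bottom, so $\XDM(\alg{A})$ is bounded; it is nonempty because $\alg{A}$ is nontrivial; and the image of the fixpoint $(0,\dots,0)$ is an $f$-fixpoint of $\XDM(\alg{A})$. Conversely, for (ii) $\Rightarrow$ (i) I would coarsen the order: writing $s,t$ for the bottom and top of $X$ (so $f(s)=t$), the interior $X\setminus\{s,t\}$ carries the restriction of $f$, which still has the fixpoint supplied by (ii). Feeding this interior into Lemma~\ref{Lem_demorgansub} with $u=s$, $v=t$ shows $\ADM(X,\le',f,\mathcal{P}(X)) \in \cop{IS}(\F_{\DM})$, where $\le'$ is the flat order with bottom $s$, top $t$, and interior an antichain. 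Since $\le' \subseteq \le$, the identity on $X$ is an onto De~Morgan-space morphism $(X,\le',f,\mathcal{P}(X)) \to \XDM(\alg{A})$, and strong duality turns it into an embedding of $\alg{A}\cong \ADM(\XDM(\alg{A}))$ into $\ADM(X,\le',f,\mathcal{P}(X))$, giving $\alg{A}\in \cop{IS}(\F_{\DM})$. The degenerate case $s=t$ forces $\alg{A}\cong\alg{2}$, which embeds into $\F_{\DM}$ directly.

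For (ii) $\Leftrightarrow$ (iii) the fixpoint clause is immediate from Lemma~\ref{Lem:idempotent}: $\alg{A}$ satisfies \eqref{Eq:DMnontrivial} iff $X$ has an $f$-fixpoint. To get (ii) $\Rightarrow$ \eqref{Eq:1prime} I argue as in Lemma~\ref{Lemma_ISPU-BDL}: let $s$ be the bottom of $X$ and suppose $c\vee d=\top$, so $e_{\alg{A}}(c)(s)\vee e_{\alg{A}}(d)(s)=1$ in $\alg{D}$. The key point is that a morphism cannot take the value $b=\top_{\spa{D}}$ at $s$, since monotonicity would then force $e_{\alg{A}}(c)\equiv b$, contradicting preservation of $f$ (as $f(b)=a\neq b$). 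Hence both values lie in $\{a,0,1\}$, whose only join equal to $1$ requires one of them to be $1$; monotonicity then yields $e_{\alg{A}}(c)\equiv 1$ or $e_{\alg{A}}(d)\equiv 1$, i.e.\ $c=\top$ or $d=\top$.

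The crux of the argument is (iii) $\Rightarrow$ (ii), namely producing a bottom element of $X$ (its image under $f$ is then a top). Nontriviality from \eqref{Eq:DMnontrivial} gives $\bot\neq\top$, and I would define $h_0\colon\alg{A}\to\alg{D}$ by $h_0(\bot)=0$, $h_0(\top)=1$, and $h_0(c)=a$ otherwise, then check that $h_0\in X$ and that $h_0$ lies $\le_D$-below every $\psi\in X$ (immediate, since $a=\bot_{\spa{D}}$ and each $\psi$ sends $\bot,\top$ to $0,1$). The main obstacle, and the essential use of the De~Morgan structure, is verifying that $h_0$ preserves $\wedge$ and $\vee$: this requires not only \eqref{Eq:1prime} but also its meet-dual \eqref{Eq:0prime} ($c\wedge d=\bot$ implies $c=\bot$ or $d=\bot$). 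The point that makes this work is that in a De~Morgan algebra \eqref{Eq:1prime} and \eqref{Eq:0prime} are equivalent: applying the order-reversing bijection $\neg$ to $c\vee d=\top$ turns it into $\neg c\wedge\neg d=\bot$, with $\neg$ ranging over all elements. With \eqref{Eq:0prime} available, the cases $c\wedge d\in\{\bot,\top\}$ and $c\vee d\in\{\bot,\top\}$ all close and $h_0$ is a homomorphism. I expect this homomorphism verification, together with correctly pinning down the order on $X=\XDM(\alg{A})$ so that $s$ is genuinely least and morphism values at $s$ avoid $b$, to be the most delicate part of the proof.
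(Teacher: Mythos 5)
Your proposal is correct and follows essentially the same route as the paper's proof: (i) $\Leftrightarrow$ (ii) via the strong duality (embedding dualizes to an onto morphism from $\spa{D}^n$; conversely, coarsening $\le$ to the flat order and invoking Lemma~\ref{Lem_demorgansub}), and (ii) $\Leftrightarrow$ (iii) via Lemma~\ref{Lem:idempotent} for the fixpoint together with the bottom homomorphism $h_0$ taking values $0,a,1$, using that \eqref{Eq:0prime} follows from \eqref{Eq:1prime} by applying $\neg$. In fact you are slightly more careful than the paper in handling the degenerate case $s=t$ and in spelling out why $h_0$ is a homomorphism.
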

\begin{proof}
(i) $\Rightarrow$ (ii). 
Suppose that  $\alg{A}\in\cop{IS}(\F_{\DM})$. Then, 
since $\alg{A}$ is finite, there exists $n\in\{1,2,\ldots\}$ and a one-to-one homomorphism 
$h\colon \alg{A}\to \F_{\DM}(n)$. So the map $\XDM(h)\colon \XDM( \F_{\DM}(n))\to \XDM(\alg{A})$ is 
onto. Since $\XDM(h)$ is order-preserving and $\XDM( \F_{\DM}(n))\cong\spa{D}^{n}$, it follows that $\XDM(\alg{A})$ is bounded. Similarly, 
since  $\XDM(h)$ preserves the involution and $\XDM(\F_{\DM}(n))$ has 
$2^n$ fixpoints for the involution (every element in $\{0,1\}^n$ is a fixpoint in  $\spa{D}^{n}$),  $\XDM(\alg{A})$ has at least one fixpoint.

 (ii) $\Rightarrow$ (i). Let us denote the top and bottom maps in $\XDM(\alg{A})$ by $v$ and $u$, respectively. 
   Let  $\leq'$ denote the partial order on $X$ defined as follows:
$$
x\leq' y\qquad \mbox{ iff }\qquad x=u \quad {\rm or} \quad y=v \quad {\rm or} \quad x=y.
$$
It follows from Lemma~\ref{Lem_demorgansub} that $\ADM(X, \leq', f,\mathcal{P}(X))\in\cop{IS}(\F_{\cls{DMA}})$.

Since $\leq'\subseteq \leq$, the identity map $\id_X$ on $X$  is an order and involution-preserving map from  
$( X, \leq', f,\tau)$ onto $\XDM(\alg{A})$. Hence, the map $\ADM(\id_X)\circ e_{\alg{A}}\colon \alg{A}\to 
\ADM( X, \leq', f,\tau)$ is one-to-one. So $\alg{A}\in \cop{IS}(\F_{\cls{DMA}})$. 

(ii) $\Rightarrow$ (iii). By Lemma~\ref{Lem:idempotent}, $\alg{A}$ satisfies \eqref{Eq:DMnontrivial}.
 To see that $\alg{A}$ satisfies \eqref{Eq:1prime}, let $x$ be the bottom element  and $f(x)$ the top element of $(X,\leq)$. 
Assume that $c,d\in\alg{A}$ are such that $c \vee d=\top$. Then the maps  $e_{\alg{A}}(c)$ and $e_{\alg{A}}(d)$ satisfy 
$e_{\alg{A}}(c)(y) \vee e_{\alg{A}}(d)(y)=1$ 
for each $y\in X$. In particular, 
$e_{\alg{A}}(c)(x)\vee e_{\alg{A}}(d)(x)=1$.  We claim that $e_{\alg{A}}(c)(x)=1$ or $e_{\alg{A}}(d)(x)=1$. 
Suppose for a contradiction that $e_{\alg{A}}(c)(x)\neq 1\neq e_{\alg{A}}(d)(x)$. Then 
only two cases are possible: $e_{\alg{A}}(c)(x)=a$ and $e_{\alg{A}}(d)(x)=b$, or $e_{\alg{A}}(d)(x)=a$ and $e_{\alg{A}}(c)(x)=b$. If  $e_{\alg{A}}(c)(x)=b$, then $e_{\alg{A}}(c)(f(x))=a$, but since $x$ is the bottom element of $( X,\leq)$, it follows that 
$x\leq f(x)$ and  $e_{\alg{A}}(c)(x)\nleq e_{\alg{A}}(c)(f(x))$ which contradicts the monotonicity of 
$e_{\alg{A}}$. A similar contradiction is obtained if we assume $e_{\alg{A}}(d)(x)=b$. 
Hence, without loss of generality, assume $e_{\alg{A}}(c)(x)=1$. Then $e_{\alg{A}}(c)(f(x))=1$. Since $e_{\alg{A}}(c)$ is a monotone map, for each $y\in X$, 
$1=e_{\alg{A}}(c)(x)\leq e_{\alg{A}}(c)(y)\leq e_{\alg{A}}(c)(f(x))=1$. That is, $e_{\alg{A}}(c)$ is constantly $1$, establishing $c=\top$ as required. 

(iii) $\Rightarrow$ (ii). If $\alg{A}$ satisfies  \eqref{Eq:1prime} and \eqref{Eq:DMnontrivial}, then $\top$ is  join irreducible and  $\bot$ is meet 
irreducible and $\bot\neq\top$  in $\alg{A}$. Hence, the map $h\colon \alg{A}\to\alg{D}$ defined by $h(\top)=1$, $h(\bot)=0$, and $h(x)=a$ if $x\notin\{\bot,\top\}$ is 
 a homomorphism and therefore the bottom element of $\XDM(\alg{A})$. Clearly, $f(h)$ is the top element of $(X,\leq)$. 
 So  $( X,\leq)$ is non-empty and bounded. Also, by Lemma~\ref{Lem:idempotent}, there exists $x\in X$ such that $f(x)=x$.

\end{proof}

Now immediately, by Lemmas~\ref{l:mainlemma} and~\ref{Lem_ISPUdemorgan}:

\begin{theorem}\label{t:BasisDeMorgan}
$\{(\ref{Eq:1prime}),(\ref{Eq:DMnontrivial})\}$ is a basis for the admissible clauses of De~Morgan algebras.
\end{theorem}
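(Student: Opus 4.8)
The plan is to read this theorem as a direct corollary of the two results immediately preceding it, so the strategy reduces to checking that their hypotheses are in place. First I would record that $\cls{DMA} = \Qg(\alg{D}) = \cop{ISP}(\alg{D})$ is generated by the single finite algebra $\alg{D}$ and is therefore a \emph{locally finite} quasivariety (the free $n$-generated algebra embeds into a finite power of $\alg{D}$). This is exactly the standing hypothesis needed to apply Lemma~\ref{l:mainlemma}(a) with $\Q = \cls{DMA}$ and $\Lambda = \{(\ref{Eq:1prime}),(\ref{Eq:DMnontrivial})\}$.

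Applying that lemma, the assertion ``$\Lambda$ is a basis for the admissible clauses of $\cls{DMA}$'' (its condition (ii)) becomes equivalent to the purely structural statement (its condition (i)): for every finite $\alg{B} \in \cls{DMA}$,
\[
\alg{B} \in \cop{IS}(\F_{\DM}) \quad \textrm{iff} \quad \alg{B} \textrm{ satisfies } \Lambda .
\]
Establishing this equivalence is all that remains, and it is precisely the equivalence of conditions (i) and (iii) in Lemma~\ref{Lem_ISPUdemorgan}, read off for an arbitrary finite De~Morgan algebra: condition (i) there is $\alg{A} \in \cop{IS}(\F_{\DM})$, and condition (iii) there is that $\alg{A}$ satisfies $(\ref{Eq:1prime})$ and $(\ref{Eq:DMnontrivial})$, i.e. satisfies $\Lambda$. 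Hence condition (i) of Lemma~\ref{l:mainlemma}(a) holds, and the theorem follows at once. Note that the intermediate dual-space condition (ii) of Lemma~\ref{Lem_ISPUdemorgan} plays no role here: the cycle (i) $\Leftrightarrow$ (iii) already supplies exactly the clause-satisfaction characterization demanded by the reduction.

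I do not expect any genuine obstacle at this stage, precisely because the substantive work has already been discharged: the embedding construction of Lemma~\ref{Lem_demorgansub} (used for (ii)$\Rightarrow$(i) in Lemma~\ref{Lem_ISPUdemorgan}), the fixpoint/chain-rank argument of Lemma~\ref{Lem:idempotent} (which pins down the $\neg$-fixpoint clause $(\ref{Eq:DMnontrivial})$), and the general passage from admissibility to finite subalgebras of $\F_{\DM}$ packaged in Lemma~\ref{l:mainlemma}. The only point requiring care is bookkeeping — matching the variables and the ``iff'' direction of the two lemmas so that the characterization of finite subalgebras of $\F_{\DM}$ lines up verbatim with the hypothesis of the reduction lemma — after which the conclusion is immediate.
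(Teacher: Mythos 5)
Your proposal is correct and is exactly the paper's argument: the paper derives Theorem~\ref{t:BasisDeMorgan} immediately from Lemma~\ref{l:mainlemma}(a) (applicable since $\cls{DMA}=\cop{ISP}(\alg{D})$ is locally finite) combined with the equivalence (i)~$\Leftrightarrow$~(iii) of Lemma~\ref{Lem_ISPUdemorgan}. Your additional bookkeeping about local finiteness and matching the two lemmas' conditions is precisely what the paper leaves implicit.
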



\subsection{Admissible Quasi-identities in De~Morgan Algebras}

In order to present a basis for the admissible quasi-identities of De~Morgan algebras 
we again need some preparatory lemmas.

\begin{lemma}\label{Lem_fixpointovermin}
Let $\alg{A}$  be a finite De Morgan algebra and let $\XDM(\alg{A})=(X,\leq,f,\tau)$ be its dual space. Then the following are equivalent:
\begin{itemize}
\item[{\rm (i)}] $\alg{A}$ satisfies \eqref{Eq_DeM1}.
\item[{\rm (ii)}] For every $x\in \min(X,\leq)$, there exists $z\in X$ such that $x\leq z$ and $f(z)=z$.
\end{itemize}
\end{lemma}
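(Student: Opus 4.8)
The plan is to use the strong duality of Theorem~\ref{Th:DM_Duality} together with the evaluation isomorphism $e_{\alg{A}}\colon\alg{A}\to\ADM(\XDM(\alg{A}))$ to recast satisfaction of \eqref{Eq_DeM1} as a statement about the order- and $f$-preserving maps $X\to\spa{D}$, and then to analyse these maps directly. First I would record the dictionary between the algebraic premises and the dual maps. Writing $\hat{c}=e_{\alg{A}}(c)$ and using that the operations of $\ADM(X)$ are computed pointwise in $\alg{D}$, one checks in the four-element algebra that $c\leqn\neg c$ holds iff $\hat{c}$ never takes the value $1$; that $\neg(c\vee d)\leqn c\vee d$ holds iff $\hat{c}\vee\hat{d}$ never takes the value $0$; that $\neg d\vee w\eq\top$ says $\neg\hat{d}\vee\hat{w}$ is constantly $1$; and that the conclusion $w\eq\top$ says $\hat{w}$ is constantly $1$. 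A further observation I would use repeatedly is that at a fixpoint $z$ of $f$ every map $\hat{c}$ satisfies $\hat{c}(z)=f(\hat{c}(z))$, so $\hat{c}(z)\in\{0,1\}$, the two $f$-fixed elements of $\spa{D}$.

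For (ii)$\Rightarrow$(i), assume each minimal element lies below a fixpoint and suppose $\hat{c},\hat{d},\hat{w}$ witness the premises. The first step pins down the values at fixpoints: at any fixpoint $z$ the three maps are Boolean-valued, so the first premise forces $\hat{c}(z)=0$, the second then forces $\hat{d}(z)=1$, and the third forces $\hat{w}(z)=1$. The second step shows $\hat{w}$ is constantly $1$. Supposing not, and using that $f$ swaps the values $a$ and $b$ and that $\hat{w}$ is order-preserving, I would descend to a minimal element $x^{*}$ with $\hat{w}(x^{*})\in\{0,a\}$. Taking a fixpoint $z\geq x^{*}$ supplied by (ii), order-preservation of $\hat{d}$ gives $\hat{d}(x^{*})\in\{a,1\}$, and a two-line case split on this value against the third premise at $x^{*}$ (using $\hat{w}(x^{*})\in\{0,a\}$) yields a contradiction. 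Hence $\hat{w}\eq\top$.

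For (i)$\Rightarrow$(ii) I would argue contrapositively, producing maps violating \eqref{Eq_DeM1} from a minimal $x^{*}$ admitting no fixpoint above it. Then the interval $I=[x^{*},f(x^{*})]={\uparrow}x^{*}\cap{\downarrow}f(x^{*})$ is a convex, $f$-invariant, fixpoint-free subset of ${\uparrow}x^{*}$. I would define: $\hat{w}$ sending $x^{*}\mapsto a$, $f(x^{*})\mapsto b$, and everything else to $1$; $\hat{d}$ equal to $0$ on $I$, to $b$ on ${\uparrow}x^{*}\setminus I$, to $a$ on ${\downarrow}f(x^{*})\setminus I$, and to $1$ elsewhere; and $\hat{c}$ built from the rank decomposition $X=U\cup V\cup W$ of the proof of Lemma~\ref{Lem:idempotent} (with $m(x)$ the largest size of a chain below $x$), taking the value $a$ on $U$, $b$ on $V$, $0$ on the fixpoints of $f$, and, after splitting the non-fixpoints of $W$ into an $f$-matched pair $W_{1},W_{2}$, $a$ on $W_{1}$ and $b$ on $W_{2}$. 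The crucial point is that $\{\hat{d}=0\}=I$ and $\{\hat{c}=0\}=\{z:f(z)=z\}$ are disjoint, since $I$ is fixpoint-free; this is exactly what forces $\hat{c}\vee\hat{d}$ to avoid $0$, while the third premise reduces to a verification that is automatic off $\{x^{*},f(x^{*})\}$ and holds there by construction. As $\hat{w}(x^{*})=a\neq 1$, the conclusion fails.

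The main obstacle I anticipate is this second construction: verifying that $\hat{c}$, $\hat{d}$ and $\hat{w}$ are genuinely order- and $f$-preserving maps into $\spa{D}$. The delicate feature is that the two $f$-fixpoints $0$ and $1$ are incomparable in $\spa{D}$ (with $a$ least and $b$ greatest), so order-preservation across the boundaries of $I$, of $U,V,W$, and around the fixpoints severely restricts which values may appear above and below a given point; arranging all three maps to meet these constraints simultaneously and to have disjoint zero-sets is where the real work lies. The monotonicity of $\hat{d}$ on the overlap $I$ and of $\hat{c}$ near the fixpoints will reuse the facts that $U$ is decreasing, $V$ increasing, and $W$ an antichain, as established in Lemma~\ref{Lem:idempotent}.
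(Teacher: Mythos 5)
Your proposal is correct and takes essentially the same approach as the paper's proof: the same pointwise dictionary for the premises of \eqref{Eq_DeM1}, the same fixpoint-evaluation-plus-monotone-descent argument for (ii)$\Rightarrow$(i), and, for the contrapositive of (i)$\Rightarrow$(ii), the same three witnessing maps --- your $\hat{w}$, $\hat{d}$, $\hat{c}$ are precisely the paper's $\nu$, $\mu$, $\gamma$, with $\hat{c}$ built from the $U\cup V\cup W$ decomposition in the proof of Lemma~\ref{Lem:idempotent} exactly as the paper does. One point in your favour: your $\hat{d}$, defined on the non-strict regions ${\uparrow}x^{*}\setminus I$ and ${\downarrow}f(x^{*})\setminus I$, makes the third premise hold at $x^{*}$ and $f(x^{*})$ even in the degenerate case $x^{*}\nleq f(x^{*})$ (where $I=\emptyset$), whereas the paper's $\mu$, written with strict inequalities, then takes the value $1$ at $x^{*}$, so its claim $\mu(\{x,f(x)\})=\{0\}$ tacitly assumes $x\leq f(x)$.
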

\begin{proof}
(i) $\Rightarrow$ (ii). If $\alg{A}$ is trivial, then (ii) is clearly satisfied.
Suppose then for a contradiction that $\alg{A}$ is non-trivial and that there exists $x\in \min(X,\leq)$ such that for all $z\geq x$, 
$f(z)\neq z$. 

Let $Y=\{z\in X\mid f(z)\neq z\}$. Then $Y$ with the structure inherited from  $\XDM(\alg{A})$ is a non-trivial De~Morgan space without fixpoints. 
In the proof of  Lemma~\ref{Lem:idempotent}, it was shown that given such a De Morgan space $Y$ without fixpoints, there exists a map 
$\eta\colon Y\to \spa{D}$ such that $\neg \eta=\eta$; that is, $\eta(Y)\subseteq \{a,b\}$.

Now, consider $\gamma,\mu,\nu\colon \XDM(\alg{A}) \to \spa{D}$ defined as follows:
$$
\gamma(u)=
\begin{cases}
\eta(u)  	& {\rm if }\ u\neq f(u)\\
0	& {\rm if }\ u=f(u)
\end{cases}
\qquad
\mu(u)=
\begin{cases}
0	& {\rm if} \ x\leq u\leq f(x)\\
b	& {\rm if} \ x< u \ {\rm and } \ u\nleq f(x)\\
a	& {\rm if} \ u< f(x) \ {\rm and }\ x\nleq u\\
1	& {\rm otherwise}
\end{cases}
\qquad
\nu(u)=
\begin{cases}
a	& {\rm if } \ x= u\\
b	& {\rm if } \ f(x)= u\\
1	& {\rm otherwise.}
\end{cases}
$$
It is an easy calculation to prove that $\gamma,\mu,\nu\in\ADM(\XDM(\alg{A}))$. Now observe that since $\gamma(X)\subseteq\{0,a,b\}$ and  
$(\gamma\vee\mu)(X)\in\{a,b,1\}$, it follows that $\gamma\leq\neg\gamma$ and $\neg(\gamma\vee \mu)\leq \gamma\vee\mu$. Moreover, 
since $\nu(u)\neq 1$ iff $u\in\{x,f(x)\}$ and $\mu(\{x,f(x)\})=\{0\}$, $\neg\mu\vee\nu=1$ and  $\nu\neq 1$, contradicting \eqref{Eq_DeM1}.

(ii) $\Rightarrow$ (i). 
Let $c,d,e\in A$ be such that $c\leq\neg c$, $\neg(c\vee d)\leq c\vee d$, and $\neg d\vee e=\top$. 
Consider $x\in \min(X,\leq)$. By hypothesis, there exists $y\in X$ such that $x\leq y$ and $f(y)=y$.
Then $e_{\alg{A}}(c)(y)=e_{\alg{A}}(c)(f(y))$ and $e_{\alg{A}}(d)(y)=e_{\alg{A}}(d)(f(y))$. So 
$e_{\alg{A}}(c)(y),e_{\alg{A}}(d)(y)\in\{0,1\}$. Since $e_{\alg{A}}(c)(y)\leq \neg(e_{\alg{A}}(c)(y))$, it follows that 
$e_{\alg{A}}(c)(y)\in\{0,a,b\}$ and therefore $e_{\alg{A}}(c)(y)=0$.

 Thus $\neg(e_{\alg{A}}(c)(y)\vee e_{\alg{A}}(d)(y))=\neg(e_{\alg{A}}(d)(y))\leq e_{\alg{A}}(c)(y)\vee e_{\alg{A}}(d)(y)=e_{\alg{A}}(d)(y)$, 
which implies that $e_{\alg{A}}(d)(y)=1$ and therefore $\neg e_{\alg{A}}(d)(y)=0$. 
Then  $\neg e_{\alg{A}}(d)(x)\in\{0,a\}$ and $e_{\alg{A}}(e)(y)=e_{\alg{A}}(e)(f(y))$. Therefore,  $e_{\alg{A}}(e)(y)=1$. 
On one hand, since $e_{\alg{A}}(e)(x)\leq e_{\alg{A}}(e)(y)=1$, it follows that $e_{\alg{A}}(e)(x)\in \{a,1\}$. On the other hand, since 
$e_{\alg{A}}(e)(x)\vee  \neg e_{\alg{A}}(d)(x)=1\in \alg{D}$, it follows that 
$e_{\alg{A}}(e)(x)\in \{b,1\}$. So $e_{\alg{A}}(e)(x)=1$. 
Since this can be proved for every minimal element of 
$(X,\leq)$ and every maximal element is equal to $f(x)$ for some minimal element,  $e_{\alg{A}}(e)(x)=1$ 
for every minimal or maximal element $x$ of $(X,\leq)$. Since $e_{\alg{A}}(e)$ is increasing, we conclude that 
$e_{\alg{A}}(c)$ is constantly equal to $1$; that is, $c=\top$. 
\end{proof}

\begin{lemma}\label{Lem_coverinterval}
Let $\alg{A}$  be a finite De Morgan algebra and let $\XDM(\alg{A})=(X,\leq,f,\tau)$ be its dual space. 
Then the following are equivalent:
\begin{itemize}
\item[{\rm (i)}] $\alg{A}$ satisfies \eqref{Eq_DeM2}.
\item[{\rm (ii)}] For every $x\in X$, there exists $y\in X$ such that $y\leq x,f(x)$.
\end{itemize}
\end{lemma}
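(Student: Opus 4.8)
The plan is to transport everything across the strong duality of Theorem~\ref{Th:DM_Duality}, reading \eqref{Eq_DeM2} as a statement about the De~Morgan space morphisms from $\XDM(\alg{A})=(X,\le,f,\tau)$ into $\spa{D}$. Writing $\phi=e_{\alg{A}}(c)$ and $\psi=e_{\alg{A}}(d)$, each element of $\alg{A}$ is such a morphism, the operations are computed pointwise in $\alg{D}$, each $\phi$ is order-preserving (for $\le_D$) and commutes with $f$. I would first record the arithmetic of $\alg{D}$ and $\spa{D}$ that drives the argument: the fixpoints of $\neg$ are $a$ and $b$; $c\leqn\neg c$ holds iff $\phi$ avoids the value $1$, i.e. $\phi(w)\in\{0,a,b\}$ for all $w$, since $z\leqn\neg z$ fails in $\alg{D}$ only at $z=1$; on $\{0,a,b\}$ the space order $\le_D$ restricts to the chain $a<_D 0<_D b$, which $f$ reverses; and for such $1$-free $\phi,\psi$ one has $\phi(w)\wedge\psi(w)=\bot$ unless $\phi(w)=\psi(w)=a$ or $\phi(w)=\psi(w)=b$, while $(\phi\vee\psi)(w)=1$ iff $\{\phi(w),\psi(w)\}=\{a,b\}$. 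Hence, under the premises of \eqref{Eq_DeM2}, the conclusion fails at $w$ exactly when $\{\phi(w),\psi(w)\}=\{a,b\}$.

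For (ii) $\Rightarrow$ (i) I would argue by contradiction. Suppose $c,d$ satisfy the premises but the conclusion fails, so there is some $w$ with, say, $\phi(w)=a$ and $\psi(w)=b$. Applying (ii) to $w$ yields $y\le w$ and $y\le f(w)$. Since $a$ is the $\le_D$-least element of $\spa{D}$ and $\phi$ is order-preserving, $y\le w$ forces $\phi(y)=a$. Since $\psi$ commutes with $f$, $\psi(f(w))=f(\psi(w))=f(b)=a$, whence $y\le f(w)$ forces $\psi(y)=a$ as well. Then $\phi(y)\wedge\psi(y)=a\wedge a=a\neq\bot$, contradicting the premise $c\wedge d\eq\bot$. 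So the conclusion holds and \eqref{Eq_DeM2} is satisfied.

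For the converse I would prove the contrapositive $\neg$(ii) $\Rightarrow\neg$(i): given $x\in X$ with no common lower bound with $f(x)$, I construct $\phi,\psi$ witnessing the failure of \eqref{Eq_DeM2}. First I record two consequences of $\neg$(ii): $x$ and $f(x)$ are incomparable (else $f(x)$ or $x$ would itself be a common lower bound), and, applying $f$, they also have no common \emph{upper} bound (a common upper bound $w$ would give the common lower bound $f(w)$). Then set $\phi(w)=a$ if $w\le x$, $\phi(w)=b$ if $w\ge f(x)$, and $\phi(w)=0$ otherwise, and define $\psi$ by the same recipe with $x$ and $f(x)$ interchanged. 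A short case analysis shows each map is well defined (the two nontrivial regions are disjoint, since $w\le x$ and $w\ge f(x)$ would force $f(x)\le x$), order-preserving into the chain $a<_D 0<_D b$, and compatible with $f$; hence $\phi,\psi\in\ADM(X)$, and since they avoid the value $1$ the first two premises hold. For the third, $\phi(w)\wedge\psi(w)\neq\bot$ would require $\phi(w)=\psi(w)=a$ (a common lower bound of $x,f(x)$) or $\phi(w)=\psi(w)=b$ (a common upper bound), both just excluded, so $c\wedge d\eq\bot$ holds. Finally $(\phi\vee\psi)(x)=a\vee b=1$, so the conclusion $x\vee y\leqn\neg(x\vee y)$ fails and \eqref{Eq_DeM2} is not satisfied.

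The delicate step, and the one I expect to demand the most care, is this construction direction. Checking that $\phi$ and $\psi$ are simultaneously order-preserving and $f$-compatible is routine but fiddly, and more substantively the verification of the premise $c\wedge d\eq\bot$ rests on the non-obvious fact that the absence of a common lower bound of $x$ and $f(x)$ also precludes a common upper bound. This symmetry, forced by the order-reversing involution $f$, is precisely what allows the single combinatorial condition in (ii) to control both the meet premise and the join conclusion at once.
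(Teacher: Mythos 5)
Your proof is correct and follows essentially the same route as the paper: for (ii) $\Rightarrow$ (i) the paper likewise takes a point where the join equals $1$, pulls back to a common lower bound $y$, and uses monotonicity plus $f$-compatibility to force both values at $y$ to be $a$, contradicting $c\wedge d\eq\bot$; and for the converse the paper constructs exactly your two maps ($a$ on the down-set of $x$, $b$ on the up-set of $f(x)$, $0$ elsewhere, and the same with $x$, $f(x)$ swapped). Your explicit observation that $\neg$(ii) also excludes common \emph{upper} bounds of $x$ and $f(x)$ is the same fact the paper uses (somewhat tersely) to verify the meet premise, so there is nothing missing.
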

\begin{proof}
(i) $\Rightarrow$ (ii). 
If $\alg{A}$ is trivial, then (ii) is clearly satisfied. Otherwise, suppose that $\alg{A}$ is non-trivial and  that there exists 
$x\in \XDM(\alg{A})$ such that for all $y\in\XDM(\alg{A})$, either 
$y\nleq x$ or  $y\nleq f(x)$. Consider $\eta,\mu\colon \XDM(\alg{A}) \to \spa{D}$ defined as follows:
$$
\eta(u)=
\begin{cases}
a	& {\rm if } \ u\leq x\\
b	& {\rm if } \ u\geq f(x)\\
0	& {\rm otherwise}
\end{cases}
\qquad
\mu(u)=
\begin{cases}
a	& {\rm if} \ u\leq f(x)\\
b	& {\rm if} \ u\geq x\\
0	& {\rm otherwise.}
\end{cases}
$$
If $u\leq x$ and $u\geq f(x)$, then $f(x)\leq x$, contradicting the assumption that, since $f(x) \le f(x)$, 
$f(x) \nleq x$. So $\eta$ is well-defined. Similarly, $\mu$ is well-defined.

It is easily observed that $\eta,\mu\in\ADM(\XDM(\alg{A}))$ and that 
$\eta\leq\neg\eta$, $\mu\leq \neg\mu$. Letting $c=e_{\alg{A}}^{-1}(\eta)$ and $d=e_{\alg{A}}^{-1}(\mu)$, 
it follows that $c\leq\neg c$ and $d\leq \neg d$.  
Consider $u\in \XDM(\alg{A})$. If $\eta(u)=a$, then $u\leq x$ and, by assumption, 
$u\nleq f(x)$. Thus $\mu(u)\in\{b,0\}$, which implies $(\eta\wedge\mu)(u)=\eta(u)\wedge\mu(u)=0\in\alg{D}$. 
Similarly, if $\eta(u)=b$, then $u\geq f(x)$ and $u\nleq x$. Thus $\mu(u)\in\{a,0\}$, and $(\eta\wedge\mu)(u)=0$. 
So $\eta\wedge \mu$ is constantly $0$; that is, $c\wedge d=\bot$.  
But $(\eta\vee\mu)(x)=\eta(x)\vee\mu(x)=1$, and therefore $(\eta\vee\mu)(x)=1\nleq\neg(\eta\vee\mu)(x)=0$; 
that is, $c\vee d\nleq \neg(c\vee d)$ which contradicts \eqref{Eq_DeM2}.

(ii) $\Rightarrow$ (i). 
Consider $c,d\in A$ such that 
$c\leq\neg c$, $d\leq\neg d$, and $c\wedge d=\bot$. 
Let $x\in\XDM(\alg{A})$. Suppose now for a contradiction that $(e_{\alg{A}}(c)\vee e_{\alg{A}}(d))(x)=1$. 
It follows from the assumptions that either $e_{\alg{A}}(c)(x)=a$ and $e_{\alg{A}}(d)(x)=b$, 
or $e_{\alg{A}}(c)(x)=b$ and $e_{\alg{A}}(d)(x)=a$. If $e_{\alg{A}}(c)(x)=a$ and $e_{\alg{A}}(d)(x)=b$, then 
 there exists $y\in\XDM(\alg{A})$ such that $y\leq x,f(x)$. But then $e_{\alg{A}}(c)(y)\leq e_{\alg{A}}(c)(x)=a$ and 
 $e_{\alg{A}}(d)(y)\leq e_{\alg{A}}(d)(f(x))=a$. So $e_{\alg{A}}(c)(y)\wedge e_{\alg{A}}(d)(y)=a\neq 0$, contradicting the 
 assumption that $c\wedge d=\bot$. A similar contradiction is obtained if we assume $e_{\alg{A}}(c)(x)=b$ and 
 $e_{\alg{A}}(d)(x)=a$.

We conclude that $(e_{\alg{A}}(c)\vee e_{\alg{A}}(d))(x)\neq 1$ for each $x\in\XDM(\alg{A})$; 
that is $e_{\alg{A}}(c\vee d)(\XDM(\alg{A}))\subseteq \{0,a,b\}$. Therefore $c\vee d\leq\neg(c\vee d)$. 
\end{proof}

\begin{lemma}\label{l:dmaisp}
Let $\alg{A}$  be a finite non-trivial De~Morgan algebra and let $\XDM(\alg{A})=(X,\leq,f,\tau)$ be its dual space. 
Then the following are equivalent:
\begin{itemize}
  \item[{\rm (i)}] $\alg{A}\in\cop{ISP}(\F_{\cls{DMA}})$.
  \item[{\rm (ii)}] $(X,\leq,f,\tau)$ satisfies the following conditions:
  \begin{itemize}
    \item[{\rm (a)}] For every $x\in \min(X,\leq)$, there exists $z\in X$ such that $x\leq z=f(z)$.
    \item[{\rm (b)}] For every $x\in X$, there exists $y\in X$ such that $y\leq x,f(x)$.
  \end{itemize}
  \item[{\rm (iii)}] $\alg{A}$ satisfies \eqref{Eq_DeM1} and \eqref{Eq_DeM2}.
\end{itemize}
\end{lemma}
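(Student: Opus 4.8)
The plan is to prove the cycle of implications (ii) $\Rightarrow$ (i) $\Rightarrow$ (iii) $\Rightarrow$ (ii), where the two equivalences (i) $\Leftrightarrow$ (iii) connecting the quasi-identities to the dual conditions are already essentially packaged in Lemmas~\ref{Lem_fixpointovermin} and~\ref{Lem_coverinterval}. Indeed, those two lemmas establish separately that $\alg{A}$ satisfies \eqref{Eq_DeM1} iff condition (a) holds, and that $\alg{A}$ satisfies \eqref{Eq_DeM2} iff condition (b) holds. Since conditions (a) and (b) together are exactly (ii), and \eqref{Eq_DeM1} together with \eqref{Eq_DeM2} is exactly (iii), the equivalence (ii) $\Leftrightarrow$ (iii) is immediate by simply conjoining the two lemmas. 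This is the easy half of the argument, and I would dispatch it in a single sentence citing both lemmas.

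The substantive work is therefore the equivalence (i) $\Leftrightarrow$ (ii), which must connect membership in $\cop{ISP}(\F_{\cls{DMA}})$ to the combinatorial conditions (a) and (b) on the dual space. For (i) $\Rightarrow$ (ii), I would exploit the strong duality of Theorem~\ref{Th:DM_Duality}: if $\alg{A}$ embeds into a finite power $\F_{\cls{DMA}}^m$, then applying $\XDM$ turns this embedding into a surjection from $\XDM(\F_{\cls{DMA}}^m)$ onto $\XDM(\alg{A})$. Since $\XDM$ sends products to coproducts (disjoint unions) of dual spaces, and each factor $\XDM(\F_{\cls{DMA}}(n))$ is isomorphic to a power $\spa{D}^n$, the dual of $\alg{A}$ is a De~Morgan-space quotient (image under a surjective morphism) of a disjoint union of spaces of the form $\spa{D}^n$. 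The key observation is that each $\spa{D}^n$ already satisfies conditions (a) and (b): every element of $\{0,1\}^n$ is an $f$-fixpoint, every minimal element lies below such a fixpoint, and every $x$ has the common lower bound $(0,\ldots,0)$ below both $x$ and $f(x)$. I would then verify that conditions (a) and (b) are preserved under taking order-and-involution-preserving surjective images, which transfers them to $\XDM(\alg{A})$.

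For the converse (ii) $\Rightarrow$ (i), I would use Lemma~\ref{l:localfiniteembeddings}(b) together with Lemma~\ref{Lem_demorgansub}, mirroring the structure of the proof of Lemma~\ref{Lem_ISPUdemorgan}. The idea is to embed $\alg{A}$ into a product of algebras each of which is already known to lie in $\cop{IS}(\F_{\cls{DMA}})$. Concretely, for each minimal element $x$ of $X$, condition (a) furnishes a fixpoint $z\geq x$, and I would use the elements $e_{\alg{A}}(c)$ to separate points via homomorphisms into suitable subalgebras whose dual spaces have the shape $(Y,\leq,\bar f,\mathcal{P}(Y))$ of Lemma~\ref{Lem_demorgansub}; condition (b) guarantees the common-lower-bound structure needed so that each such quotient space is a genuine De~Morgan space with a fixpoint, hence the corresponding algebra embeds into $\F_{\cls{DMA}}$ by that lemma. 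Collecting these homomorphisms into a single map into a finite product yields an embedding of $\alg{A}$ into a finite power of $\F_{\cls{DMA}}$, placing $\alg{A}\in\cop{ISP}(\F_{\cls{DMA}})$.

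The main obstacle I expect is the bookkeeping in (ii) $\Rightarrow$ (i): one must define, for each point or each minimal point of $X$, a surjective morphism from some $\spa{D}^{n}$-type space onto an appropriate substructure of $\XDM(\alg{A})$ that both respects $f$ and places the designated fixpoint correctly, and then argue that the induced family of algebra homomorphisms is jointly injective. Verifying that these maps are well-defined monotone involution-preserving surjections — especially checking that condition (b) supplies enough common lower bounds to keep the constructed order a genuine De~Morgan-space order without spurious identifications — is the delicate combinatorial step, analogous to the ``tedious but straightforward'' verification in Lemma~\ref{Lem_demorgansub}. The separation argument guaranteeing injectivity of the product map is where conditions (a) and (b) must be used in tandem rather than separately.
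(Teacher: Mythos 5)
Your overall architecture coincides with the paper's: (ii) $\Leftrightarrow$ (iii) is indeed just the conjunction of Lemmas~\ref{Lem_fixpointovermin} and~\ref{Lem_coverinterval}, and your (i) $\Rightarrow$ (ii) is essentially the paper's argument (dualize an embedding $\alg{A}\hookrightarrow\F_{\cls{DMA}}(n)^m$ to a surjection from $\coprod_{i=1}^m\spa{D}^n_i$ onto $X$, then push witnesses forward; for (a) one also needs that a minimal $x\in X$ is itself the image of the bottom of some copy). One factual slip there: in $\spa{D}^n$ the common lower bound of $x$ and $f(x)$ is the bottom element $(a,\ldots,a)$, not $(0,\ldots,0)$; since $0$ and $1$ are incomparable in $\spa{D}$, the point $(0,\ldots,0)$ does not lie below $(1,\ldots,1)$. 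Rather, $(0,\ldots,0)$ is the fixpoint one places \emph{above} the unique minimal element $(a,\ldots,a)$ — exactly the division of labour in the paper's proof. This is harmlessly repairable.

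The genuine gap is in (ii) $\Rightarrow$ (i), where your sketch stops precisely at the idea that carries the proof. You say you would map onto ``suitable subalgebras whose dual spaces have the shape of Lemma~\ref{Lem_demorgansub}'' and ``use the elements $e_{\alg{A}}(c)$ to separate points'', but you never say what the pieces are, and the role you assign to (b) — ensuring ``each such quotient space is a genuine De~Morgan space with a fixpoint'' — is wrong: the fixpoint in each piece comes from (a), while closure under the involution and boundedness are automatic. The paper's construction is: for each $x\in\min(X,\leq)$ take the interval $[x,f(x)]=\{z\in X\mid x\leq z\leq f(x)\}$ with the inherited structure; it is closed under $f$ (as $f$ is an order-reversing involution), bounded, and contains a fixpoint by (a), so $\ADM([x,f(x)])\in\cop{IS}(\F_{\DM})$ by Lemma~\ref{Lem_ISPUdemorgan} — not by Lemma~\ref{Lem_demorgansub} directly, since the interval need not carry that lemma's ``diamond'' order (the coarsening to that order happens inside the proof of Lemma~\ref{Lem_ISPUdemorgan}). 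The actual role of (b) is to make the inclusions \emph{jointly surjective}: given $u\in X$, (b) gives $y\leq u,f(u)$, and any minimal $x\leq y$ satisfies $x\leq u\leq f(x)$; hence $\iota\colon\coprod_{x\in\min(X,\leq)}[x,f(x)]\to X$ is onto, and by strong duality $\ADM(\iota)$ embeds $\alg{A}\cong\ADM(\XDM(\alg{A}))$ into $\prod_{x\in\min(X,\leq)}\ADM([x,f(x)])\in\cop{ISP}(\F_{\DM})$. Without this interval decomposition and this covering argument, the ``bookkeeping'' you defer is in fact the entire content of the direction.
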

\begin{proof}
(ii) $\Leftrightarrow$ (iii). Immediate from  Lemmas~\ref{Lem_fixpointovermin} and~\ref{Lem_coverinterval}.

(i) $\Rightarrow$ (ii). Since $\alg{A}$ is finite, by (i), there exist $n,m\in\{1,2,\ldots\}$ and a one-to-one homomorphism 
$h\colon\alg{A}\to\F_{\cls{DMA}}(n)^m$.
So $\XDM(h)\colon \coprod_{i=1}^m {\spa{D}}^n\to \XDM(\alg{A})$ is an onto map.

Let $x\in X$ and $y\in  \coprod_{i=1}^m \spa{D}^n$ be such that $\XDM(h)(y)=x$.
Note that $\coprod_{i=1}^m  \spa{D}^n$ is just the  disjoint union of $m$ different copies $\spa{D}^n _i$ of $\spa{D}^n$. So 
there is an $i$ such that $y\in \spa{D}^n_i$. Let $z=(a,\ldots,a)\in  \spa{D}^n_i$, then $\XDM(h)(z)\leq x,f(x)$ which proves (b).

If $x\in  \min(X,\leq)$, then  $\XDM(h)(z)=x$. Let $t=(0,\dots,0)\in D^n_i$, then $x\leq\XDM(h)(t)=f(\XDM(h)(t))$, and (a) follows.

(ii) $\Rightarrow$ (i). For each $x\in \min(X,\leq)$, let $[x,f(x)]=\{z\in X\mid x\leq z\leq f(x)\}$. By condition (a), 
there exists $z\in[x,f(x)]$ such that $f(z)=z$.  Let us consider $[x,f(x)]$  endowed with the involution, poset, and 
topological structure inherited from $X$. Then $[x,f(x)]$ satisfies all the conditions of Lemma~\ref{Lem_ISPUdemorgan} (iii). 
Hence
\[
\ADM([x,f(x)])\in\cop{IS}(\F_{\DM}).
\]
By definition, trivially the inclusion map $\iota_x\colon [x,f(x)]\to X$ 
is monotone and preserves the involution. Consider $Y=\coprod_{x\in\min(X,\leq)} [x,f(x)]$ and  $\iota\colon Y\to X$ 
the unique morphism such that $\iota|_{[x,f(x)]}=\iota_x$. By (b), $\iota$ is an onto morphism in the category of Priestley spaces 
with an involution. Now Theorem~\ref{Th:DM_Duality} implies that the homomorphism 
$\ADM(\iota)\colon \ADM(\XDM(\alg{A}))\to \ADM(Y)$ is one-to-one. However,
$$
\textstyle\ADM(Y)=\ADM(\coprod_{x\in\min(X,\leq)} [x,f(x)])\cong\prod_{x\in\min(X,\leq)} \ADM([x,f(x)])\in \cop{ISP}(\F_{\DM}).
$$ 
Hence $\alg{A}\cong\ADM(\XDM(\alg{A}))\in \cop{ISP}(\F_{\DM})$ as required. 
\end{proof}

Again, immediately by Lemmas~\ref{l:mainlemma}~and~\ref{l:dmaisp}:

\begin{theorem}\label{t:BasisDeMorganQ}
$\{(\ref{Eq_DeM1}),(\ref{Eq_DeM2})\}$ is a basis for the admissible quasi-identities of De~Morgan algebras.
 \end{theorem}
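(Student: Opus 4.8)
The plan is to deduce the theorem from the general reduction established in Lemma~\ref{l:mainlemma}(b) together with the dual characterization in Lemma~\ref{l:dmaisp}. First I would record that $\cls{DMA} = \Qg(\alg{D})$ is generated by the finite algebra $\alg{D}$ and is therefore a locally finite quasivariety, so that Lemma~\ref{l:mainlemma} applies. Setting $\Lambda = \{(\ref{Eq_DeM1}),(\ref{Eq_DeM2})\}$, it then suffices, by the equivalence (iii) $\Leftrightarrow$ (iv) of that lemma, to verify that for every finite $\alg{B} \in \cls{DMA}$ we have $\alg{B} \in \cop{ISP}(\F_{\cls{DMA}})$ iff $\alg{B}$ satisfies $\Lambda$.

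For non-trivial finite $\alg{B}$, this is exactly the equivalence (i) $\Leftrightarrow$ (iii) of Lemma~\ref{l:dmaisp}, which translates membership in $\cop{ISP}(\F_{\cls{DMA}})$ into the conditions (a) and (b) on the dual space $\XDM(\alg{B})$ and then reads these conditions back off as satisfaction of the quasi-identities \eqref{Eq_DeM1} and \eqref{Eq_DeM2}. Thus the substantive content of the argument has already been carried out in Lemmas~\ref{Lem_fixpointovermin}, \ref{Lem_coverinterval}, and~\ref{l:dmaisp}, and no further duality computation is needed at this stage; the theorem is essentially an assembly of these pieces.

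The only point requiring separate attention---and the main (minor) obstacle---is that Lemma~\ref{l:dmaisp} is stated only for non-trivial algebras, whereas condition (iii) of Lemma~\ref{l:mainlemma} quantifies over \emph{all} finite members of $\cls{DMA}$. I would close this gap by treating the trivial algebra directly: it satisfies every quasi-identity, since the single conclusion identity holds in any one-element algebra; and it belongs to $\cop{ISP}(\F_{\cls{DMA}})$, because $\cop{ISP}(\F_{\cls{DMA}}) = \Qg(\F_{\cls{DMA}})$ by Lemma~\ref{l:localfiniteembeddings}(c) and every quasivariety contains the trivial algebra. Hence both sides of the required equivalence hold for the trivial algebra, and so condition (iii) of Lemma~\ref{l:mainlemma} holds for every finite $\alg{B} \in \cls{DMA}$. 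Applying the implication (iii) $\Rightarrow$ (iv) of that lemma then yields that $\Lambda$ is a basis for the admissible quasi-identities of De~Morgan algebras.
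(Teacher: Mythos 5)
Your proposal is correct and follows essentially the same route as the paper, which derives the theorem directly from Lemma~\ref{l:mainlemma}(b) combined with Lemma~\ref{l:dmaisp}. Your explicit treatment of the trivial algebra (noting that it satisfies every quasi-identity and lies in $\cop{ISP}(\F_{\cls{DMA}})$ via Lemma~\ref{l:localfiniteembeddings}(c)) is a detail the paper leaves implicit, and it is handled correctly.
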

 

\subsection{Admissibility in De~Morgan Lattices} \label{Sec:DemLat}

Characterizations of admissibility in De~Morgan lattices follow relatively straightforwardly from the above characterizations 
for De~Morgan algebras. We first extend the definition of $\overline{\alg{L}}$ 
given in Section~\ref{s:distributive} to the case of De~Morgan algebras and lattices. 
Let $\alg{A}=(A,\wedge,\vee,\neg)$ be a De~Morgan lattice. Then $\overline{\alg{A}}=
(A,\wedge,\vee,\neg',\top,\bot)$ denotes the 
De~Morgan algebra that results from adding to the lattice $(A,\wedge,\vee)$ fresh top and bottom elements $\top$ and $\bot$ with  
$\neg'(\top)=\bot$, $\neg'(\bot)=\top$, and  $\neg'(x)=\neg(x)$ otherwise. Note that $\overline{{\alg{A}}}$ 
satisfies \eqref{Eq:1prime}.

\begin{lemma}\label{l:dml}
Let $\alg{A}$  be a finite De~Morgan lattice. Then the following are equivalent:
\begin{itemize}
\item[{\rm (i)}] $\alg{A}\in\cop{IS}(\F_{\cls{DML}})$. 
\item[{\rm (ii)}] $\alg{A}$ satisfies \eqref{Eq:DMnontrivial}.
\end{itemize}
\end{lemma}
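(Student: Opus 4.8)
The plan is to reduce the statement to the already-established characterization for De~Morgan algebras, Lemma~\ref{Lem_ISPUdemorgan}, by passing through the bounding construction $\overline{\,\cdot\,}$, exactly as was done for distributive lattices in Section~\ref{s:distributive}. The cornerstone of this reduction is the isomorphism
\[
\overline{\F}_{\cls{DML}} \cong \F_{\cls{DMA}},
\]
the De~Morgan analogue of $\overline{\F}_{\cls{DL}} \cong \F_{\BDL}$. I would prove this via the universal property: any map from the free generators into a De~Morgan algebra $\alg{B}$ extends uniquely to a De~Morgan lattice homomorphism on $\F_{\cls{DML}}$, and this in turn extends uniquely to a De~Morgan algebra homomorphism on $\overline{\F}_{\cls{DML}}$ by sending the adjoined bounds $\top,\bot$ to $\top_{\alg{B}},\bot_{\alg{B}}$. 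One checks routinely that this respects the lattice operations involving the fresh bounds and the negation $\neg'$, and that uniqueness is forced since any De~Morgan algebra homomorphism must preserve $\top$ and $\bot$.

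Next I would establish that for a finite De~Morgan lattice $\alg{A}$ (so that $\overline{\alg{A}}$ is a finite De~Morgan algebra),
\[
\alg{A} \in \cop{IS}(\F_{\cls{DML}}) \quad\Longleftrightarrow\quad \overline{\alg{A}} \in \cop{IS}(\F_{\cls{DMA}}).
\]
For the forward direction, a De~Morgan lattice embedding $\alg{A} \to \F_{\cls{DML}}$ lifts to a De~Morgan algebra embedding $\overline{\alg{A}} \to \overline{\F}_{\cls{DML}} \cong \F_{\cls{DMA}}$ by fixing the bounds; injectivity and preservation of $\neg'$ are immediate because $\top,\bot$ are fresh. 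For the converse, any De~Morgan algebra embedding $h\colon \overline{\alg{A}} \to \F_{\cls{DMA}} \cong \overline{\F}_{\cls{DML}}$ must send $\top,\bot$ to the bounds of $\overline{\F}_{\cls{DML}}$, and by injectivity no element of $A$ can be sent to these bounds; hence the restriction $h|_{A}$ is a De~Morgan lattice embedding of $\alg{A}$ into the unbounded reduct $\F_{\cls{DML}}$.

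Finally I would combine this with Lemma~\ref{Lem_ISPUdemorgan}. By construction $\overline{\alg{A}}$ always satisfies \eqref{Eq:1prime} (as recorded just before the lemma), and since $\neg'$ agrees with $\neg$ on $A$ while fixing neither adjoined bound ($\neg'\top=\bot\neq\top$ and $\neg'\bot=\top\neq\bot$), the $\neg'$-fixpoints of $\overline{\alg{A}}$ are precisely the $\neg$-fixpoints of $\alg{A}$; thus $\overline{\alg{A}}$ satisfies \eqref{Eq:DMnontrivial} exactly when $\alg{A}$ does. Lemma~\ref{Lem_ISPUdemorgan} then gives that $\overline{\alg{A}} \in \cop{IS}(\F_{\cls{DMA}})$ iff $\overline{\alg{A}}$ satisfies \eqref{Eq:1prime} and \eqref{Eq:DMnontrivial}, iff $\alg{A}$ satisfies \eqref{Eq:DMnontrivial}. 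Chaining the three equivalences yields the result.

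The routine verifications (that the displayed maps are well-defined homomorphisms, and that $\overline{\,\cdot\,}$ acts functorially on embeddings) constitute the bulk of the work but present no real difficulty. The step requiring the most care is the converse half of the second displayed equivalence: one must argue that a De~Morgan algebra embedding of $\overline{\alg{A}}$ cannot collapse any original element of $\alg{A}$ onto a fresh bound of $\overline{\F}_{\cls{DML}}$, so that restricting genuinely lands inside the free De~Morgan lattice $\F_{\cls{DML}}$ rather than merely in $\overline{\F}_{\cls{DML}}$.
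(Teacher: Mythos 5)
Your proposal is correct and takes essentially the same route as the paper's proof: both reduce to Lemma~\ref{Lem_ISPUdemorgan} via the isomorphism $\F_{\cls{DMA}}\cong\overline{\F}_{\cls{DML}}$, the equivalence $\alg{A}\in\cop{IS}(\F_{\cls{DML}})$ iff $\overline{\alg{A}}\in\cop{IS}(\F_{\cls{DMA}})$, and the observations that $\overline{\alg{A}}$ always satisfies \eqref{Eq:1prime} and satisfies \eqref{Eq:DMnontrivial} exactly when $\alg{A}$ does. The only difference is that you spell out in detail the two steps the paper dispatches with ``observe first'' and ``hence, easily.''
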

\begin{proof}
 Observe first that $\alg{F}_{\DM}(\kappa)\cong \overline{\alg{F}}_{\cls{DML}}(\kappa)$. Hence, easily, 
  each finite  De~Morgan lattice $\alg{A}$ is in  
 $\cop{IS}(\F_{\cls{DML}})$ iff $\overline{\alg{A}}\in\cop{IS}(\F_{\DM})$. Moreover,  by Lemma~\ref{Lem_ISPUdemorgan},  
 $\overline{\alg{A}}\in\cop{IS}({\F_{\DM}})$ iff $\overline{\alg{A}}$ satisfies $\eqref{Eq:1prime}$ and $\eqref{Eq:DMnontrivial}$. Hence, 
 since $\overline{\alg{A}}$ always satisfies \eqref{Eq:1prime},  $\alg{A}\in\cop{IS}({\F_{\cls{DML}}})$ iff 
 $\overline{\alg{A}}$ satisfies 
$\eqref{Eq:DMnontrivial}$. Since $\overline{\alg{A}}$ satisfies 
$\eqref{Eq:DMnontrivial}$ iff  
 $\alg{A}$ satisfies $\eqref{Eq:DMnontrivial}$,  the result follows. 
\end{proof}
\begin{lemma}\label{l:dml2}
Let $\alg{A}$  be a finite De~Morgan lattice. Then the following are equivalent:
\begin{itemize}
\item[{\rm (i)}] $\alg{A}\in\cop{ISP}(\F_{\cls{DML}})$. 
\item[{\rm (ii)}] $\alg{A}$ satisfies \eqref{deMorganrule}.
\end{itemize}
\end{lemma}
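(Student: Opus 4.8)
The plan is to reduce this to the characterization of $\cop{IS}(\F_{\cls{DML}})$ already obtained in Lemma~\ref{l:dml}, exploiting the fact that the quasi-identity (\ref{deMorganrule}) differs from the negative clause (\ref{Eq:DMnontrivial}) only in its treatment of the trivial algebra. First I would record the semantic content of (\ref{deMorganrule}): since the conclusion $x\eq y$ contains the fresh variable $y$, a De~Morgan lattice $\alg{A}$ satisfies (\ref{deMorganrule}) exactly when $\alg{A}$ is trivial or $\alg{A}$ has no element fixed by $\neg$. For non-trivial $\alg{A}$ this ``no $\neg$-fixpoint'' condition is precisely (\ref{Eq:DMnontrivial}); the two sentences part company only on the one-element algebra, which has a $\neg$-fixpoint (so fails (\ref{Eq:DMnontrivial})) yet vacuously satisfies (\ref{deMorganrule}).

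For (i) $\Rightarrow$ (ii) it suffices to show that $\F_{\cls{DML}}$ itself satisfies (\ref{deMorganrule}), since quasi-identities are preserved by $\cop{I}$, $\cop{S}$, and $\cop{P}$ and therefore hold throughout $\cop{ISP}(\F_{\cls{DML}})$. As $\F_{\cls{DML}}$ is non-trivial, by the reading above I only need that it has no $\neg$-fixpoint. By (\ref{Eq:ValidityFree}), an element $h_{\cls{DML}}(t)$ is a $\neg$-fixpoint iff $\mdl{\cls{DML}} t\eq \neg t$; but the two-element De~Morgan lattice (the reduct of the two-element Boolean algebra, with $\neg 0 = 1 \neq 0$) has no $\neg$-fixpoint, so no term $t$ can satisfy $t\eq\neg t$ in every De~Morgan lattice. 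Hence $\F_{\cls{DML}}$ has no $\neg$-fixpoint and satisfies (\ref{deMorganrule}).

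For (ii) $\Rightarrow$ (i) I would split on triviality. If $\alg{A}$ is trivial it is the empty power of $\F_{\cls{DML}}$ and so lies in $\cop{ISP}(\F_{\cls{DML}})$ (admitting products over the empty index set, as elsewhere in the paper). If $\alg{A}$ is non-trivial, then the reading of (\ref{deMorganrule}) forces $\alg{A}$ to have no $\neg$-fixpoint, i.e.\ $\alg{A}$ satisfies (\ref{Eq:DMnontrivial}); Lemma~\ref{l:dml} then yields $\alg{A}\in\cop{IS}(\F_{\cls{DML}})\subseteq\cop{ISP}(\F_{\cls{DML}})$.

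The only genuinely delicate point is the bookkeeping around the trivial algebra: moving from $\cop{IS}$ to $\cop{ISP}$ is exactly what readmits the one-element algebra, which is why the characterizing sentence here must be the quasi-identity (\ref{deMorganrule}) rather than the negative clause (\ref{Eq:DMnontrivial}) used for $\cop{IS}$ in Lemma~\ref{l:dml}. I would deliberately avoid trying to imitate the De~Morgan \emph{algebra} argument of Lemma~\ref{l:dmaisp} by passing to $\overline{\alg{A}}$ and its dual space, since $\overline{(\cdot)}$ does not commute with direct products and the images of the ``middle'' elements of $\overline{\alg{A}}$ need not avoid the coordinate bounds of a power of $\F_{\DM}$; routing through Lemma~\ref{l:dml} sidesteps this difficulty completely.
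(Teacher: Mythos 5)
Your proof is correct, and its skeleton is exactly the paper's: the paper's one-line proof combines Lemma~\ref{l:dml} with the fact that a De~Morgan lattice satisfies \eqref{deMorganrule} iff it is trivial or satisfies \eqref{Eq:DMnontrivial}, and your case split in (ii) $\Rightarrow$ (i) (trivial algebra as empty product; otherwise Lemma~\ref{l:dml}) is precisely that reduction, under the same convention that $\cop{ISP}$ contains trivial algebras. The one place you genuinely diverge is the grounding of (i) $\Rightarrow$ (ii): the paper leaves this implicit, and its explicit treatment of the exactly analogous step for Kleene lattices (Lemma~\ref{l:kleenelatticeISP}) goes through local finiteness --- every finitely generated subalgebra of the free algebra is finite, lies in $\cop{IS}$ of it, and hence satisfies the characterizing condition by the $\cop{IS}$-lemma --- whereas you show directly that $\F_{\cls{DML}}$ has no $\neg$-fixpoint, using (\ref{Eq:ValidityFree}) together with the failure of $t \eq \neg t$ in the two-element De~Morgan lattice. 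Both arguments are sound; yours is slightly more elementary and self-contained for that step (it needs neither local finiteness nor Lemma~\ref{l:dml} in the forward direction), while the paper's pattern has the virtue of uniformity across the De~Morgan and Kleene cases. Your closing remark about why one should not imitate Lemma~\ref{l:dmaisp} via $\overline{\alg{A}}$ is apt but inessential to the proof.
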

\begin{proof}
The result follows from Lemma~\ref{l:dml} and the fact that a De Morgan lattice $\alg{B}$ satisfies $\eqref{deMorganrule}$ iff 
it is a trivial algebra or satisfies \eqref{Eq:DMnontrivial}. 
\end{proof}
So by Lemmas~\ref{l:mainlemma},~\ref{l:dml},~and~\ref{l:dml2}:

\begin{theorem}\label{t:DeMorganLattices}
$\{\eqref{Eq:DMnontrivial}\}$ and $\{(\ref{deMorganrule})\}$ are bases for the admissible clauses and 
admissible quasi-identities of De~Morgan lattices, respectively.
\end{theorem}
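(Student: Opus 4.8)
The plan is to reduce the two claims about De~Morgan \emph{lattices} to the already-established results about De~Morgan \emph{algebras}, exploiting the bounding construction $\overline{\alg{A}}$ and the key observation that $\alg{F}_{\DM}(\kappa) \cong \overline{\alg{F}}_{\cls{DML}}(\kappa)$. The theorem is an immediate corollary once Lemmas~\ref{l:dml} and~\ref{l:dml2} are in place, so the real content lies in those two lemmas, and the structure of the final proof is to invoke Lemma~\ref{l:mainlemma} to convert embeddability conditions into basis statements. Concretely, for the clause part I would apply Lemma~\ref{l:mainlemma}(a) with $\Lambda = \{\eqref{Eq:DMnontrivial}\}$: Lemma~\ref{l:dml} tells us that for each finite $\alg{A} \in \cls{DML}$, membership in $\cop{IS}(\F_{\cls{DML}})$ is equivalent to satisfying \eqref{Eq:DMnontrivial}, which is precisely condition (i) of Lemma~\ref{l:mainlemma}(a), yielding that $\{\eqref{Eq:DMnontrivial}\}$ is a basis for the admissible clauses. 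For the quasi-identity part I would apply Lemma~\ref{l:mainlemma}(b) with $\Lambda = \{\eqref{deMorganrule}\}$, using Lemma~\ref{l:dml2} to supply the required equivalence between $\cop{ISP}(\F_{\cls{DML}})$-membership and satisfaction of \eqref{deMorganrule}.

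The heart of Lemma~\ref{l:dml} is the translation between $\alg{A}$ and $\overline{\alg{A}}$. First I would record that adding fresh top and bottom elements to the lattice reduct and extending $\neg$ accordingly gives a bounded De~Morgan algebra, and that this operation is compatible with freeness in the sense that $\overline{\alg{F}}_{\cls{DML}}(\kappa) \cong \alg{F}_{\DM}(\kappa)$ (the free De~Morgan algebra is freely generated over the same generators once bounds are adjoined). From this isomorphism one deduces that a finite De~Morgan lattice $\alg{A}$ embeds into $\F_{\cls{DML}}$ exactly when $\overline{\alg{A}}$ embeds into $\F_{\DM}$: an embedding of $\alg{A}$ lifts to one of $\overline{\alg{A}}$ because the fresh bounds must map to the bounds of the free algebra, and conversely an embedding of $\overline{\alg{A}}$ restricts to one of $\alg{A}$. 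Then I would apply Lemma~\ref{Lem_ISPUdemorgan}, under which $\overline{\alg{A}} \in \cop{IS}(\F_{\DM})$ iff $\overline{\alg{A}}$ satisfies both \eqref{Eq:1prime} and \eqref{Eq:DMnontrivial}. Since $\overline{\alg{A}}$ automatically satisfies \eqref{Eq:1prime} (its top is join-irreducible by construction, as it is a freshly adjoined cover of the whole old lattice), the condition collapses to \eqref{Eq:DMnontrivial}; and as \eqref{Eq:DMnontrivial} does not mention $\bot,\top$, it holds in $\overline{\alg{A}}$ iff it holds in $\alg{A}$.

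For Lemma~\ref{l:dml2} the plan is to pass from the embeddability condition of Lemma~\ref{l:dml} to the $\cop{ISP}$-version and to reconcile it with the quasi-identity \eqref{deMorganrule}. The crucial elementary fact is that a De~Morgan lattice $\alg{B}$ satisfies \eqref{deMorganrule} (namely $x \eq \neg x \imp x \eq y$) iff $\alg{B}$ is trivial or satisfies \eqref{Eq:DMnontrivial}: if $\alg{B}$ has a fixpoint $c = \neg c$ and also two distinct elements, then taking $x = c$ and $y$ the two witnesses violates the quasi-identity, while conversely absence of fixpoints makes the premise unsatisfiable. Combining this with the observation that $\cop{ISP}(\F_{\cls{DML}})$-membership of a finite algebra reduces (via the same $\overline{(\cdot)}$ correspondence and Lemma~\ref{l:localfiniteembeddings}, or by a direct product argument) to the $\cop{IS}$-condition of Lemma~\ref{l:dml} applied componentwise, one obtains that $\alg{A} \in \cop{ISP}(\F_{\cls{DML}})$ iff $\alg{A}$ satisfies \eqref{deMorganrule}.

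I expect the main obstacle to be the careful verification of the freeness isomorphism $\alg{F}_{\DM}(\kappa) \cong \overline{\alg{F}}_{\cls{DML}}(\kappa)$ and, relatedly, the clean transfer of embeddings between $\alg{A}$ and $\overline{\alg{A}}$ across it. The subtlety is that adjoining bounds is not a functor on arbitrary homomorphisms (a lattice homomorphism need not be bound-preserving, and a De~Morgan-algebra homomorphism into $\overline{\alg{A}}$ might send some generator to a fresh bound), so one must check that the relevant embeddings and free generators behave correctly — in particular that the universal property of the free De~Morgan lattice, once composed with adjunction of bounds, really yields the universal property of the free De~Morgan algebra on the same generating set. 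All remaining steps are then either direct citations of Lemmas~\ref{l:mainlemma},~\ref{Lem_ISPUdemorgan}, and~\ref{l:localfiniteembeddings}, or short syntactic observations about which clauses mention the bounds, so they should be routine once this correspondence is pinned down.
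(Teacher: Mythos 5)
Your proposal is correct and follows essentially the same route as the paper: reduce to De~Morgan algebras via the bounding construction $\overline{\alg{A}}$ and the isomorphism $\alg{F}_{\DM}(\kappa)\cong\overline{\alg{F}}_{\cls{DML}}(\kappa)$, apply Lemma~\ref{Lem_ISPUdemorgan} noting that $\overline{\alg{A}}$ automatically satisfies \eqref{Eq:1prime} and that \eqref{Eq:DMnontrivial} transfers between $\alg{A}$ and $\overline{\alg{A}}$, use the elementary equivalence of \eqref{deMorganrule} with ``trivial or \eqref{Eq:DMnontrivial}'' for the quasi-identity case, and conclude via Lemma~\ref{l:mainlemma}. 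The subtleties you flag (non-functoriality of adjoining bounds, embeddings forced to avoid the fresh bounds by injectivity) are exactly the points the paper leaves implicit, and your treatment of them is sound.
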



\section{Kleene Algebras and Lattices}\label{s:kleene}

De~Morgan algebras and lattices have only two non-trivial proper subvarieties apiece, Boolean algebras 
and lattices, and Kleene algebras and lattices (see~\cite{Kal58}). 
As shown in Example~\ref{e:Bool}, 
Boolean algebras and Boolean lattices are non-negative universally complete but not universally complete. 
However, the varieties of 
Kleene algebras and lattices both have admissible quasi-identities and clauses that are not derivable.

Recall that \emph{Kleene lattices} and {\em Kleene algebras} are, respectively, De~Morgan lattices 
and De~Morgan algebras satisfying the Kleene condition 
$a \wedge \neg a \le b \vee \neg b$ for all elements $a,b$.
Consider now the  clauses: 
\begin{align}
\nonumber \top\eq\bot 							&\quad\imp\quad\emptyset	\tag{\ref{Eq:nontrivial}}\\
\nonumber x\vee y \eq \top 						& \quad \imp \quad  x\eq \top ,\  y\eq \top \tag{\ref{Eq:1prime}}\\
\nonumber x\eq\lnot x							&  \quad\imp\quad\emptyset	\tag{\ref{Eq:DMnontrivial}}\\
 {\neg x \leqn x,\ x\wedge \neg y\leqn \neg x\vee y} 		& \quad \imp \quad \neg y \leqn y. \label{Kleenerule}
\end{align} 
In this section, we prove the following:

\begin{itemize}

\item The clauses \eqref{Eq:nontrivial}, \eqref{Eq:1prime}, and \eqref{Kleenerule} 
		form a basis for the admissible clauses of Kleene algebras (Theorem~\ref{t:KleeneClauses}).
		
\item The quasi-identity \eqref{Kleenerule} forms a basis for both the admissible quasi-identities of Kleene algebras 
		(Theorem~\ref{t:BasisKleeneQ}) and the admissible quasi-identities of Kleene lattices (Theorem~\ref{t:Kleenemain}). 
		
\item The clauses \eqref{Eq:DMnontrivial} and \eqref{Kleenerule} 
		form a basis for  the admissible clauses of Kleene lattices  (Theorem~\ref{t:Kleenemain}).

\end{itemize}

\noindent
In~\cite{MR12}, an alternative proof  is given of the fact that~(\ref{Kleenerule}) forms a basis for the 
admissible quasi-identities of Kleene lattices and algebras. This proof makes use of a description
 of the  finite lattice of quasivarieties of Kleene lattices provided by Pynko in~\cite{Pyn99}. 
 However, as in the case of De~Morgan algebras, a suitable natural duality provides a method for 
characterizing also the admissible clauses of these varieties. 


\subsection{A Natural Duality for Kleene Algebras}

Let $\alg{K}=(\{0,a,1\},\wedge,\vee,\neg,0,1)$ be the Kleene algebra where $0 < a < 1$
and $\neg 0=1$, $\neg 1=0$, and $\neg a=a$ (see Figure~\ref{Fig:Kleene}(a)). Then $\cls{KA} = \Qg(\alg{K})$ (see~\cite{Kal58}). 
Let also $\spa{K}=( \{  0,a,1\} ,\leq_K,\sim_K,\{  0,1\},\mathcal{P}( \{  0,a,1\}))$ where $\leq_K$ is defined by $0,1\leq_K a$ and $0,1$ 
are incomparable (see Figure~\ref{Fig:Kleene}(b)), and $\sim_K\subseteq K^2 $ is as depicted in Figure~\ref{Fig:Kleene}(c).

\begin{figure}
\begin{center}
\begin{pspicture}(0,0)(11,6)
\psdots(1,1)(1,2)(1,3)
\psline(1,1)(1,3)
\pscurve[linecolor=gray,arrows=<->](1.1,1.1)(1.5,2)(1.1,2.9)
\psarc[linecolor=gray,arrows=->](1.2,2){.2}{230}{500}
\rput[c](1,0.7){$0$}
\rput[c](1,3.3){$1$}
\rput[c](0.75,2.05){$a$}
\rput[c](1,0){(a) $\alg{K}$}

\psdots(4,1)(5,2)(6,1)
\psline(4,1)(5,2)(6,1)
\rput[c](4,0.7){$0$}
\rput[c](6,0.7){$1$}
\rput[c](5,2.3){$a$}
\rput[c](5,0){(b) $(K,\leq_K)$}

\psdots(9,1)(10,2)(9,3)(8,4)(9,5)(10,4)(8,2)
\psline(9,1)(10,2)(9,3)(8,4)(9,5)(10,4)(8,2)(9,1)
\rput[c](9,0.7){$(0,0)$}
\rput[c](9,3.55){$(a,a)$}
\rput[c](9,5.3){$(1,1)$}
\rput[c](7.7,4.3){$(a,1)$} 
\rput[c](10.3,4.3){$(1,a)$}
\rput[c](7.7,2.3){$(0,a)$} 
\rput[c](10.3,2.3){$(a,0)$}
\rput[c](9,0){(c) $\sim_K$}

\end{pspicture}
\caption{The Kleene algebra $\alg{K}$ and the Kleene  space $\spa{K}$}\label{Fig:Kleene}
\end{center}
\end{figure}
A structure $( X,\leq_X,\sim_X,Y_X, \tau_X) $ is called a {\em Kleene space} if
\begin{itemize}
\item $( X,\leq_X, \tau_X)$ is a Priestley space.
\item $\sim_X$ is a closed binary relation; i.e., $\sim_X$ is a closed subset of $X^{2}$.
\item $Y_X$ is a closed subset of $X$.
\item For all $x,y,z\in X$:
\begin{itemize}
\item $x\sim_Xx$.
\item	If $x\sim_Xy$ and $x\in Y_X$, then $x\leq_X y$.
\item If $x\sim_X y$ and $y\leq_X z$, then $z\sim_X x$.
\end{itemize}
\end{itemize}

\begin{theorem}[\cite{DW85}] \label{Theo:KleeneDuality} $\spa{K}$ determines a strong natural 
duality between the variety of Kleene algebras and the category of Kleene spaces (see also~\cite[Theorem~4.3.10]{CD98}).
\end{theorem}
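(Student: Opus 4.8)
The plan is to apply the general machinery of natural dualities developed in \cite{CD98}, exploiting the fact that Kleene algebras have a lattice reduct and hence a near-unanimity term. First I would verify that $\spa{K}$ is algebraic over $\alg{K}$ in the sense of Section~\ref{s:preliminaries}: the order $\leq_K$ and the relation $\sim_K = K^2 \setminus \{(0,1),(1,0)\}$ are both subuniverses of $\alg{K}^2$ (a routine check of closure under $\wedge$, $\vee$, $\neg$ and the constants), while $Y_K = \{0,1\}$ is the Boolean subuniverse of $\alg{K}$. This guarantees that the contravariant functors $\XK$ and $\AK$ are well defined and that each evaluation map $e_{\alg{B}}$ is a homomorphism.

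Next I would establish that $\spa{K}$ yields a strong duality. The ternary median term $m(x,y,z) = (x\wedge y)\vee(y\wedge z)\vee(z\wedge x)$ is a near-unanimity term for $\alg{K}$, so by the strong-duality results for algebras with a near-unanimity term in \cite{CD98} the algebra $\alg{K}$ is strongly dualizable, with a strong duality furnished by the alter ego carrying the discrete topology together with every compatible binary relation (every subalgebra of $\alg{K}^2$). It then remains to descend from this large alter ego to the economical structure $(\leq_K, \sim_K, Y_K)$; that is, to show that these three relations \emph{entail} every subalgebra of $\alg{K}^2$ under the usual entailment constructs (permutation and deletion of coordinates, intersection, converse, relational composition, and intersection with the fixed set $Y_K$). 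I expect this entailment step to be the main obstacle: it requires enumerating the finitely many subuniverses of $\alg{K}^2$ and exhibiting each as constructible from $\leq_K$, $\sim_K$, and $Y_K$, a finite but delicate combinatorial verification on which the adequacy of precisely these three relations rests.

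Finally I would identify the dual category $\mathcal{X}$ with the category of Kleene spaces. Each defining condition --- reflexivity of $\sim_X$, the implication that $x \sim_X y$ with $x \in Y_X$ forces $x \leq_X y$, and the compatibility of $\sim_X$ with $\leq_X$ --- holds in $\spa{K}$ and is preserved under the formation of direct powers and topologically closed substructures, so every object of $\mathcal{X}$ is a Kleene space. The reverse inclusion, that every Kleene space arises as a closed substructure of a power of $\spa{K}$, is proved by a Priestley-style separation argument using the clopen order-preserving maps into $\spa{K}$ that respect $\sim$ and $Y$. Since every strong duality is full, $\XK$ and $\AK$ then restrict to a dual equivalence between $\cls{KA}$ and the entire category of Kleene spaces, which is the assertion of the theorem; Theorem~\ref{Th:FreeDual} subsequently yields $\AK(\spa{K}^n)$ as the free $n$-generated Kleene algebra, as required in the sequel.
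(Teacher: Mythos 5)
First, a point of comparison: the paper gives no proof of Theorem~\ref{Theo:KleeneDuality} at all. It is imported as a black box from Davey and Werner \cite{DW85} (see also \cite[Theorem~4.3.10]{CD98}), where the economical alter ego $(\leq_K,\sim_K,Y_K)$ and the strongness of the duality are obtained by \emph{piggybacking} on Priestley duality. Your proposal is therefore a from-scratch argument along a genuinely different route (NU duality plus entailment reduction), and it must be judged on its own merits.

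Judged that way, it has a real gap, and the gap sits exactly on the word \emph{strong}. The NU Duality Theorem gives you only that the alter ego consisting of \emph{all} compatible binary relations yields a duality on $\cop{ISP}(\alg{K})$; it does not make that duality strong or even full, and the strong-duality theorems in \cite{CD98} do not come for free from an NU term plus binary relations --- they require either enriching the alter ego (typically with partial operations, e.g.\ here the partial endomorphism embedding $\{0,1\}$ into $\alg{K}$) or separately verifying term-closure/injectivity conditions. Worse, your final step --- cutting the big alter ego down to $\leq_K$, $\sim_K$, $Y_K$ by entailment --- is a mechanism that transfers \emph{dualities} only: fullness and strongness are notoriously not preserved when structure is deleted via entailment (a full duality need not even be strong), so even if both earlier steps were granted, your argument would deliver a duality, not a strong one. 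This is not a cosmetic defect for the present paper: strongness is what licenses, e.g., the step in Lemma~\ref{Lem_ISPUkleene} ((i) $\Rightarrow$ (ii)) where a one-to-one homomorphism $h\colon\alg{A}\to\F_{\KA}(n)$ is dualized to an \emph{onto} morphism $\XK(h)\colon\spa{K}^n\to\XK(\alg{A})$; note also that strongness of a purely relational duality forces $\alg{K}$ to be injective in $\cls{KA}$, a nontrivial algebraic fact that nothing in your formal NU-plus-entailment scheme establishes. Finally, the two steps you yourself flag --- the enumeration showing that $\leq_K$, $\sim_K$, $Y_K$ entail every subalgebra of $\alg{K}^2$, and the separation argument showing every Kleene space embeds as a closed substructure of a power of $\spa{K}$ --- are deferred rather than carried out, so even the non-strong part of the duality is only an outline. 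To repair the argument you would either need to prove injectivity of $\alg{K}$ in $\cls{KA}$ and invoke the characterization of purely relational strong dualities, or simply follow the piggyback route of \cite{DW85} that the paper cites.
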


The following lemma provides a subclass of $\cop{IS}(\F_{\KA})$ that will play a crucial role in the proof of Lemma~\ref{Lem_ISPUkleene}.

\begin{lemma}\label{Lem_kleeneSub}
 Let $( X,\leq)$ be a finite tree; that is, $( X,\leq)$ is a poset with a top element such that for each $x,y\in X$, $z\leq x,y$ 
 for some $z\in X$ iff ($x\leq y$ or $y\leq x$). Let  $ \sim_X=\leq\cup\geq$ and $Y_X=\min(X,\leq) $. 
 Then $\AK(X,\leq,\sim_X,Y_X,\mathcal{P}(X)) \in \cop{IS}(\F_{\KA})$.
\end{lemma}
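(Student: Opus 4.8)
The plan is to show, exactly as in the proofs of Lemmas~\ref{Lem_demorgansub} and~\ref{Lem_ISPUdemorgan}, that $(X,\leq,\sim_X,Y_X,\mathcal{P}(X))$ is a morphic image of a finite power of $\spa{K}$ in the dual category. First I would check that $(X,\leq,\sim_X,Y_X,\mathcal{P}(X))$ really is a Kleene space: the topological and Priestley conditions are automatic since $X$ is finite, $\sim_X$ is reflexive because $x\leq x$, and if $x\sim_X y$ with $x\in Y_X=\min(X,\leq)$ then $x\leq y$ because $x$ is minimal. The one axiom that genuinely uses the tree hypothesis is the third: if $x\sim_X y$ and $y\leq z$, then either $x\leq y\leq z$, giving $x\leq z$, or $y\leq x$ and $y\leq z$, so that $x$ and $z$ have the common lower bound $y$ and are therefore comparable by the defining property of a tree; in both cases $z\sim_X x$. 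Having verified this, since $\spa{K}$ determines a strong (hence full) duality by Theorem~\ref{Theo:KleeneDuality} and $\AK(\spa{K}^n)\cong\F_{\KA}(n)$ by Theorem~\ref{Th:FreeDual}, it suffices to construct, for some $n$, a surjective morphism $\eta\colon\spa{K}^n\to(X,\leq,\sim_X,Y_X,\mathcal{P}(X))$ of Kleene spaces: then $\AK(\eta)$ embeds $\AK(X,\leq,\sim_X,Y_X,\mathcal{P}(X))$ into $\AK(\spa{K}^n)\cong\F_{\KA}(n)$, whence the algebra lies in $\cop{IS}(\F_{\KA})$.

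I would build such an $\eta$ by induction on $|X|$, exploiting the decomposition of a tree into its root and the subtrees hanging below it. If $X=\{r\}$ is a single point, the constant map $\spa{K}\to X$ works. Otherwise let $r$ be the root, let $c_1,\ldots,c_t$ be its covers, and let $X_j=\{x\in X\mid x\leq c_j\}$ be the corresponding subtrees; each $X_j$ is a down-set of $X$, hence again a finite tree with the induced structure, and $X=\{r\}\cup X_1\cup\cdots\cup X_t$ with the $X_j$ pairwise disjoint and $\min X=\bigcup_j\min X_j$. By the induction hypothesis there are surjective Kleene-space morphisms $\eta_j\colon\spa{K}^{n_j}\to X_j$. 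Choosing $s$ with $2^s\geq t$ and a surjection $\rho\colon\{0,1\}^s\to\{1,\ldots,t\}$, I set $n=s+n_1+\cdots+n_t$ and define, for $\sigma\in\spa{K}^s$ and $q_j\in\spa{K}^{n_j}$,
\[
\eta(\sigma,q_1,\ldots,q_t)=
\begin{cases}
r & \text{if some coordinate of }\sigma\text{ equals }a,\\
\eta_{\rho(\sigma)}(q_{\rho(\sigma)}) & \text{if }\sigma\in\{0,1\}^s.
\end{cases}
\]
Here the selector block $\sigma$ chooses a branch when it is minimal and forces the value up to the root as soon as one of its coordinates is raised to $a$.

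It then remains to check that $\eta$ is a well-defined surjective morphism of Kleene spaces, and this verification is the technical heart of the proof. Surjectivity is clear: the root is hit by any $\sigma$ containing an $a$, and each $X_j$ is hit because $\eta_j$ is onto and $\rho$ is surjective. Monotonicity and preservation of $Y_X$ follow from the facts that raising a coordinate of $\sigma$ to $a$ can only move the value up to $r$, that two minimal points of $\spa{K}^s$ comparable in the product order must coincide, and that $\min X_j\subseteq\min X$. The delicate condition is preservation of $\sim_X$: the point is that two distinct patterns in $\{0,1\}^s$ are never related by $\sim$ in $\spa{K}^s$, since they disagree as $0$ and $1$ in some coordinate, so that points sent into different branches $X_j$ -- the only pairs whose images can be incomparable in $X$ -- are automatically non-$\sim$-related, while whenever one argument has an $a$ in its selector block its image is the top $r$ and is hence comparable to everything. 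I expect this interplay, matching the $0/1$ incomparability in $\spa{K}$ to the branching incomparability of the tree, to be the main obstacle, just as the tree hypothesis was the crucial ingredient already in verifying that $(X,\leq,\sim_X,Y_X,\mathcal{P}(X))$ is a Kleene space.
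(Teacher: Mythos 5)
Your proposal is correct, and it reaches the conclusion by a genuinely different construction from the paper's. Both arguments reduce the lemma, via strongness of the duality (Theorem~\ref{Theo:KleeneDuality}) and Theorem~\ref{Th:FreeDual}, to exhibiting a surjective morphism of Kleene spaces $\eta\colon\spa{K}^n\to(X,\leq,\sim_X,Y_X,\mathcal{P}(X))$ for some $n$, so that $\AK(\eta)$ is an embedding into $\AK(\spa{K}^n)\cong\F_{\KA}(n)$. The paper builds $\eta$ as a composite $\eta_1\circ\eta'\circ\eta_2$: the map $\eta_2$ collapses each tuple of $\spa{K}^n$ at its first $a$-coordinate, its image is shown to be a perfect binary tree of depth $n$, and one then uses that every finite tree is a monotone, leaf-preserving image $\eta_1$ of a sufficiently deep perfect binary tree. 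You avoid the intermediate perfect binary tree entirely and instead induct on $|X|$, decomposing the tree into its root and the subtrees below the coatoms, and gluing the inductively given surjections $\eta_j$ by means of a $\{0,1\}^s$ selector block that is sent to the root as soon as it contains an $a$. Your route also makes explicit two things the paper leaves implicit: that $(X,\leq,\sim_X,Y_X,\mathcal{P}(X))$ is indeed a Kleene space (your verification of the third axiom via the tree property is exactly right), and the preservation of $\sim$ by $\eta$. For the latter, the paper has a labor-saving observation you could have reused: in any Kleene space a common lower bound forces $\sim$, while in $\spa{K}^n$ the relation $\sim$ coincides with having a common lower bound; hence any monotone map out of $\spa{K}^n$ preserves $\sim$ automatically, and being a morphism reduces to mapping $\{0,1\}^n$ into $Y$.

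Two small points to repair, neither serious. First, when $t=1$ your condition $2^s\geq t$ permits $s=0$, in which case no selector tuple contains an $a$ and the root is never hit; you must insist on $s\geq 1$. Second, your parenthetical claim that pairs sent into different branches are ``the only pairs whose images can be incomparable in $X$'' is not literally true: two points landing in the same branch $X_j$ can also have incomparable images. This does no harm, because those pairs are covered by the inductive hypothesis: if two arguments with the same selector pattern are $\sim$-related in $\spa{K}^n$, their restrictions $q_j,q_j'$ are $\sim$-related in $\spa{K}^{n_j}$, so $\eta_j(q_j)\sim_{X_j}\eta_j(q_j')$, i.e.\ they are comparable, as required; but this case should be stated rather than subsumed under the incorrect parenthesis.
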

\begin{proof}
Observe first that for each $x,y\in \spa{K}^{n}$, 
$x\sim_{\mbox{\footnotesize{$\spa{K}^n$}}}\!y$ iff there exists $z\leq x,y$. On the other hand, from the definition of Kleene spaces, 
it follows that for each Kleene space $(Z,\leq_Z,\sim_Z,Y_Z,\tau_Z)$, if $x,y,z\in Z$  are such that $z\leq_Z x,y$, then $x\sim_Z y$. 
This proves that any monotone map from $\spa{K}^{n}$ 
into a Kleene space $(Z,\leq_Z,\sim_Z,Y_Z,\tau_Z)$ is a morphism in the category of Kleene spaces iff it sends $\{0,1\}^{n}$ into $Y_Z$.

We show that there exist $n\in\{1,2,\ldots\}$ and a monotone onto map $\eta\colon\spa{K}^n\to (X,\leq,\sim_X,Y_X,\mathcal{P}(X))$ 
such that $\eta(\{0,1\}^{n})=Y_X=\min(X,\leq)$.

For $n=1,2,\ldots$, let $(BT(n),\leq_n)$ denote the perfect binary tree of depth $n$ (i.e., a tree in which every non-leaf node has 
exactly $2$ children and all leaves are at depth $n$). Then for some $n \in \{1,2,\ldots\}$, there is 
  an onto monotone map  $\eta_1\colon BT(n)\to (X,\leq)$ 
such that $\eta_1(\min(BT(n),\le_n))=\min(X,\leq)$. Consider $\sim_{BT(n)}=\leq_n\cup\geq_n$ and $Y_{BT(n)}=\min(BT(n),\leq_n)$. 
Then   $(BT(n),\leq_n,\sim_{BT(n)},Y_{BT(n)} ,\mathcal{P}(BT(n)))$ is a Kleene space. 
It follows that $\eta_1$ is a morphism of Kleene spaces from  $(BT(n),\leq_n,\sim_{BT(n)},Y_{BT(n)} ,\mathcal{P}(BT(n)))$ 
onto $(X,\leq,\sim_X,Y_X,\mathcal{P}(X))$.

Let $\eta_2\colon \spa{K}^{n}\to \spa{K}^{n}$ be defined as follows:
$$
\eta_2(x_1,\ldots,x_n)=
\begin{cases}
(a,\ldots,a)				& {\rm if }\ x_1=a\\
(x_1,\ldots,x_n)		& 	{\rm if}\ x_j \neq a \ {\rm for} \ j = 1 \ldots n\\		
(x_1,\ldots,x_{i-1},a,\dots,a)	& {\rm if }\ i=\min\{j\in\{1,\ldots,n\}\mid x_j=a\}.\\		
\end{cases}
$$
Then $\eta_2$ is monotone and acts as the identity on $\{0,1\}^n$, and is 
hence  a morphism of Kleene spaces.

By induction on $n$ it is easy to prove that $\eta_2(\spa{K}^n)$ is a perfect binary tree of depth $n$. Therefore, considering it as a 
subobject of $\spa{K}^{n}$, there is an isomorphism of Kleene spaces $\eta'$ 
from $\eta_2(\spa{K}^n)$ onto  $(BT(n),\leq_n,\sim_{BT(n)},Y_{BT(n)} ,\mathcal{P}(BT(n)))$. 

Finally, $\eta=\eta_1\circ \eta' \circ \eta_2$ is a morphism of Kleene spaces from $\spa{K}^n$ onto $(X,\leq,\sim_X,Y_X,\mathcal{P}(X))$. 
By Theorem~\ref{Theo:KleeneDuality}, $\AK(\eta)\colon \AK(X,\leq,\sim_X,Y_X,\mathcal{P}(X))\to \AK(\spa{K}^n)$ 
is a one-to-one homomorphism; that is, $\AK(X,\leq,\sim_X,Y_X,\mathcal{P}(X))\in\cop{IS}(\F_{\KA}(n))\subseteq \cop{IS}(\F_{\KA})$, 
as required. 
\end{proof}

The algebra $\AK(X,\leq,\sim_X,Y_X,\mathcal{P}(X))$ defined in this lemma not only embeds into $\F_{\KA}$, it 
is also a retract of $\F_{\KA}$. 
 A characterization of finite retracts of free Kleene algebras, the finite projective algebras of this variety, 
is given in~\cite{BC201X}.


\subsection{Admissibility in Kleene Algebras and Lattices}\label{Sec:KleLat}

We obtain bases for admissibility in Kleene algebras and lattices following a similar procedure to that 
described in Section~\ref{s:demorgan}. First, we describe two properties of the dual space of finite non-trivial 
Kleene algebras using clauses.

\begin{lemma}\label{Lem:KleneConds}
For a finite non-trivial Kleene algebra $\alg{A}$ with dual space $\XK(\alg{A})=(X,\leq,\sim,Y,\tau)$: 
 \begin{itemize}
\item[{\rm (a)}] $\alg{A}$ satisfies \eqref{Eq:nontrivial} iff   $X\neq\emptyset$.
 \item[{\rm (b)}]	 $\alg{A}$ satisfies \eqref{Eq:nontrivial} and \eqref{Eq:1prime} iff  $( X,\leq)$ has a top element.
 \item[{\rm (c)}]	$\alg{A}$ satisfies \eqref{Eq:nontrivial} and \eqref{Kleenerule} iff  $Y=\min( X,\leq)\neq\emptyset$.
\end{itemize}
\end{lemma}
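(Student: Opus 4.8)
The plan is to work entirely on the dual side. Since $\spa{K}$ yields a strong (hence full) duality, the evaluation $e_{\alg{A}}\colon\alg{A}\to\AK(\XK(\alg{A}))$ is an isomorphism, so I would identify each $c\in\alg{A}$ with the Kleene-space morphism $\hat{c}=e_{\alg{A}}(c)\colon X\to\spa{K}$, all operations being computed pointwise in $\alg{K}$. First I would record the facts about $\spa{K}$ that drive everything: in $(\{0,a,1\},\leq_K)$ the elements $0,1$ are minimal and $a$ is the top, so for any morphism $\hat{c}$ the fibres $\hat{c}^{-1}(0)$ and $\hat{c}^{-1}(1)$ are down-sets while $\hat{c}^{-1}(a)$ is an up-set; $\sim_K$ consists of all pairs except $(0,1)$ and $(1,0)$; every morphism sends $Y$ into $\{0,1\}$; and $Y=\{h\in X:\operatorname{im}h\subseteq\{0,1\}\}$, whence $Y\subseteq\min(X,\leq)$ always. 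I would also note that $\sim$ is symmetric (put $z=y$ in the third Kleene-space axiom). Part (a) is then immediate: $\alg{A}$ satisfies \eqref{Eq:nontrivial} iff $\top\neq\bot$, i.e. iff $\alg{A}$ is non-trivial, and since $e_{\alg{A}}$ is an isomorphism and $\AK(\emptyset)$ is trivial, this holds iff $X\neq\emptyset$.

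For part (b) I would translate using the fact that in $\alg{K}$ one has $u\vee v=1$ iff $u=1$ or $v=1$: thus $c\vee d=\top$ iff $\hat{c}^{-1}(1)\cup\hat{d}^{-1}(1)=X$, and $c=\top$ iff $\hat{c}\equiv1$. If $(X,\leq)$ has a top element, then evaluating any equation $c\vee d=\top$ there forces $\hat{c}$ or $\hat{d}$ to take the value $1$ at the top; being a down-set containing the top, that fibre is all of $X$, which gives \eqref{Eq:1prime}. Conversely, assuming non-triviality and \eqref{Eq:1prime}, I would define $g\colon\alg{A}\to\alg{K}$ by $g(\top)=1$, $g(\bot)=0$ and $g(x)=a$ otherwise, check that it is a homomorphism — the join clause is precisely where \eqref{Eq:1prime} is needed, and meets follow from joins via $\neg$ — and observe that $g$ dominates every $h\in X$ pointwise (as $a$ is the $\leq_K$-top), so $g$ is the top element of $(X,\leq)$.

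Part (c) is the substantial one. I would first compute a small table on $\{0,a,1\}$ showing that $u\wedge\neg v\leqn\neg u\vee v$ fails exactly when $(u,v)=(1,0)$; together with the observation that $\neg u\leqn u$ iff $u\neq0$, this converts \eqref{Kleenerule} into the statement: whenever $\hat{c}$ avoids $0$ and no point carries $(\hat{c},\hat{d})=(1,0)$, then $\hat{d}$ avoids $0$. Since $Y\subseteq\min(X)$ always and $\min(X)\neq\emptyset$ ($X$ is finite and non-empty), the target condition is the reverse inclusion $\min(X)\subseteq Y$. For $Y=\min(X)\Rightarrow$ \eqref{Kleenerule}, given a point where $\hat{d}=0$ I would descend inside the down-set $\hat{d}^{-1}(0)$ to a minimal $x_0\in Y$; there $\hat{c}(x_0)\in\{0,1\}\cap\{a,1\}=\{1\}$, contradicting the mixed premise. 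For the contrapositive, from $x_0\in\min(X)\setminus Y$ I would build witnesses $\hat{d}$ (sending $x_0\mapsto0$, $Y\mapsto1$, everything else to $a$) and $\hat{c}$ (sending $Y\mapsto1$, the rest to $a$), verify that they are genuine Kleene-space morphisms, and conclude that the premises of \eqref{Kleenerule} hold while the conclusion fails.

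The main obstacle is exactly this last construction: unlike the De Morgan case there is no space involution, so I must produce actual morphisms respecting the closed set $Y$ and the relation $\sim$, not merely monotone maps. The order and $Y$ conditions are routine from the fibre description, but $\sim$-preservation of $\hat{d}$ reduces to showing $x_0\not\sim y$ for every $y\in Y$; this is where the hypotheses pay off — if $x_0\sim y$ with $y\in Y$, then by symmetry and the second Kleene-space axiom $y\leq x_0$, and minimality of $x_0$ forces $y=x_0\notin Y$, a contradiction. Once this separation fact is in hand, the verifications that $\hat{c}$ and $\hat{d}$ are morphisms are mechanical, and the three translated conditions read off immediately.
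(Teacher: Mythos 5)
Your proof is correct, and its overall strategy matches the paper's: identify $\alg{A}$ with $\AK(\XK(\alg{A}))$ via the evaluation isomorphism, translate \eqref{Kleenerule} into pointwise conditions on morphisms $X\to\spa{K}$, prove one direction of (c) by descending inside the down-set $\hat{d}^{-1}(0)$ to a minimal point of $X$, and the other by exhibiting explicit witness morphisms; part (b) is handled in the paper by the same homomorphism $g$ you construct (it just refers back to the De~Morgan argument), and part (a) is declared straightforward.

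The genuine difference is in the witnesses for part (c), and there your version is the one that actually works. The paper assumes $Y\subsetneq\min(X,\leq)$ but then says ``consider $y\in Y\setminus\min(X,\leq)$'', which is empty under that assumption --- evidently a typo for $y\in\min(X,\leq)\setminus Y$, the only non-vacuous reading given that $Y\subseteq\min(X,\leq)$ always holds, as you observe. More seriously, the paper's second witness $\mu$ takes the value $1$ at \emph{every} point not above $y$, and the assertion $\mu\in\AK(\XK(\alg{A}))$ is left unjustified: the Kleene-space axioms impose no condition of the form ``$y\sim x$ implies $x\geq y$'' when $y\notin Y$, so $\mu$ can send a $\sim$-related pair to $(1,0)\notin\,\sim_K$. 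Concretely, in the Kleene space with two incomparable points $u,v$, with $\sim$ the full relation and $Y=\emptyset$ (the dual of the seven-element subalgebra of $\alg{K}^2$ omitting $(0,1)$ and $(1,0)$), taking $y=v$ makes the paper's $\mu$ the non-morphism $(u,v)\mapsto(1,0)$; so that assertion cannot follow from the structure of the space alone, and invoking \eqref{Kleenerule} to rescue it would be circular, since the whole point of the construction is to contradict \eqref{Kleenerule}. Your choice of sending everything outside $Y\cup\{x_0\}$ to $a$ rather than to $1$, together with your separation fact ($x_0\not\sim z$ for all $z\in Y$, from symmetry of $\sim$, the second Kleene-space axiom, and minimality of $x_0$), is exactly what makes $\sim$-preservation go through; in the two-point example your witnesses give $c=(a,a)$ and $d=(a,0)$, which do falsify \eqref{Kleenerule}. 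In short: same approach, but your construction supplies a justification that the paper's own proof of (c) is missing.
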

\begin{proof}
(a) Straightforward, since $\alg{A}$ satisfies \eqref{Eq:nontrivial} iff it is non-trivial.

(b) Very similar to the  argument used in the proof of Lemma~\ref{Lem_ISPUdemorgan} (ii) $\Rightarrow$ (iii).

(c) Assume for a contradiction that $\alg{A}$ satisfies \eqref{Eq:nontrivial} and \eqref{Kleenerule} and $Y\subsetneq\min(X,\leq)$, and consider 
$y\in Y\setminus\min(X,\leq)$. We define
$\eta,\mu\colon \XK(\alg{A})\to \spa{K}$ as follows:
$$
\eta(x)=
\begin{cases}
 a & {\rm if} \ x\geq y\\
 1 & {\rm otherwise}
\end{cases}
\qquad
\mu(x)=
\begin{cases}
 a & {\rm if } \ x\geq y \ {\rm and } \ x\neq y\\
 0 & {\rm if } \ x=y\\
 1 & {\rm otherwise.}
\end{cases}
$$
Then $\eta,\mu\in \AK(\XK(\alg{A}))$, $\neg \eta\leq \eta$, $\eta\wedge \neg \mu=\neg \eta\leq \neg \eta\vee \mu$, and $\neg \mu \not\leq \mu$. 
Let $c=e_{\alg{A}}^{-1}(\eta)$ and $b=e_{\alg{A}}^{-1}(\mu)$. Then $\neg c\leq c$, $c\wedge \neg d\leq \neg c\vee d$, and 
$\neg d \not\leq d$, contradicting the assumption that $\alg{A}$ satisfies \eqref{Kleenerule}.

Conversely, assume that $Y=\min(X,\leq)\neq\emptyset$ and let $c,d\in A$ be such that $\neg c \leq c$ and $c\wedge \neg d\leq \neg c\vee d$. 
Then  $\neg c \leq c$ implies $e_{\alg{A}}(c)(x)\in\{a,1\}$ for each $x\in \XK(\alg{A})$.
Since $Y=\min(X,\leq)$, it follows that $e_{\alg{A}}(c)(y)=1$ 
for each $y\in \min(X,\leq)$.  If there exists $x\in\XK(\alg{A})$ such that $e_{\alg{A}}(d)(x)=0$,  then there exists $y\in\min(X,\leq)$ such that $e_{\alg{A}}(d)(y)=0$. 
It then follows that:
$$\begin{tabular}{rcl}
$ e_{\alg{A}}(c)(y)$&$=$ &$ e_{\alg{A}}(c)(y)\wedge 1$\\
&$ =$&$e_{\alg{A}}(c)(y)\wedge e_{\alg{A}}(\neg d)(y)$\\
&$\leq $&$e_{\alg{A}}(\neg c)(y)\vee e_{\alg{A}}(d)(y)$\\
&$=$&$e_{\alg{A}}(\neg c)(y)\vee 0$\\
&$=$&$e_{\alg{A}}(\neg c)(y).$
\end{tabular}
$$ 
 This contradicts  $e_{\alg{A}}(c)(y)=1$. So $e_{\alg{A}}(d)(x)\in\{a,1\}$  for each 
$x\in\XK(\alg{A})$,   which implies that $\neg e_{\alg{A}}(d)(x)\leq e_{\alg{A}}(d)(x)$ and hence $\neg d\leq d$. 
\end{proof}

\begin{lemma}\label{Lem_ISPUkleene}
  Let $\alg{A}$ be a finite 
 Kleene algebra and let $\XK(\alg{A})=(X,\leq,\sim,Y,\tau)$ be its dual space. 
  Then the following are equivalent:
\begin{itemize}
 \item[{\rm (i)}] $\alg{A}\in\cop{IS}(\F_{\KA})$.
 \item[{\rm (ii)}] $(X,\leq)$ has a top element and $Y=\min(X,\leq)$.
\item[{\rm (iii)}] $\alg{A}$ satisfies  \eqref{Eq:nontrivial}, \eqref{Eq:1prime}, and \eqref{Kleenerule}.
\end{itemize}

\end{lemma}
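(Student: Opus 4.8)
The plan is to prove the cycle (i)$\Rightarrow$(ii), (ii)$\Rightarrow$(i), together with the equivalence (ii)$\Leftrightarrow$(iii), exactly mirroring the structure of the De~Morgan argument in Lemma~\ref{Lem_ISPUdemorgan}. The equivalence (ii)$\Leftrightarrow$(iii) should come essentially for free from the preparatory Lemma~\ref{Lem:KleneConds}: part~(b) says that $\alg{A}$ satisfies \eqref{Eq:nontrivial} and \eqref{Eq:1prime} iff $(X,\leq)$ has a top element, and part~(c) that $\alg{A}$ satisfies \eqref{Eq:nontrivial} and \eqref{Kleenerule} iff $Y=\min(X,\leq)\neq\emptyset$. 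Since a top element forces $X\neq\emptyset$ and finiteness then forces $\min(X,\leq)\neq\emptyset$, conditions (ii) and (iii) say exactly the same thing. The trivial algebra is dispatched first: its dual space is empty, so (ii) fails; it is not a subalgebra of the non-trivial $\F_{\KA}$, so (i) fails; and it violates \eqref{Eq:nontrivial}, so (iii) fails. Hence I may assume $\alg{A}$ non-trivial throughout, which is precisely what Lemma~\ref{Lem:KleneConds} requires.

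For (i)$\Rightarrow$(ii) I would argue dually. If $\alg{A}\in\cop{IS}(\F_{\KA})$ then, $\alg{A}$ being finite, there is an embedding $h\colon\alg{A}\to\F_{\KA}(n)$ for some $n$, and since $\spa{K}$ yields a strong duality (Theorem~\ref{Theo:KleeneDuality}), $\XK(h)$ is an onto morphism from $\XK(\F_{\KA}(n))\cong\spa{K}^n$ (Theorem~\ref{Th:FreeDual}) onto $X$. The space $\spa{K}^n$ has top element $(a,\dots,a)$, and any onto order-preserving map carries a top to a top, so $(X,\leq)$ has a top element. For $Y=\min(X,\leq)$ I would first observe that the inclusion $Y\subseteq\min(X,\leq)$ holds in every Kleene space: the axioms give that $\sim$ is symmetric and contains $\leq\cup\geq$, so if $x\in Y$ and $w\leq x$ then $x\sim w$ forces $x\leq w$, whence $w=x$. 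For the reverse inclusion I would use surjectivity: given $x\in\min(X,\leq)$, choose a preimage $p\in\spa{K}^n$ and replace every $a$-coordinate of $p$ by $0$ to obtain $q\in\{0,1\}^n=Y_{\spa{K}^n}$ with $q\leq p$; then $\XK(h)(q)\leq x$ and minimality give $\XK(h)(q)=x$, while $q\in Y_{\spa{K}^n}$ and preservation of $Y$ by the morphism give $x\in Y$.

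The implication (ii)$\Rightarrow$(i) is the heart of the argument and, I expect, the main obstacle. The natural temptation is to copy the De~Morgan trick of coarsening $\leq$ to a ``nice'' order $\leq'$ on the same set $X$ and applying the identity map; but this fails here, because the nice objects supplied by Lemma~\ref{Lem_kleeneSub} are \emph{trees} (posets in which every up-set is a chain), and an element of $X$ lying below two incomparable elements can never be made to sit below a tree order $\leq'\subseteq\leq$ without either breaking the chain condition or creating a spurious minimal element. I would therefore genuinely \emph{unravel} $(X,\leq)$ rather than merely recolour it. Concretely, let $T$ be the set of strictly descending sequences $(\top=x_0>x_1>\dots>x_k)$ in $(X,\leq)$ starting at the top element, ordered by declaring $s$ below $t$ exactly when $t$ is a prefix of $s$, and let $\psi\colon T\to X$ send a sequence to its last entry $x_k$. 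Then $T$ is finite, the up-set of any sequence is its linearly ordered set of prefixes, so $(T,\leq_T)$ is a tree with top $(\top)$, and $\psi$ is onto and order-preserving with $\psi(\min T)\subseteq\min(X,\leq)=Y$. Equipping $T$ with $\sim_T=\leq_T\cup\geq_T$ and $Y_T=\min T$ makes it a Kleene space and $\psi$ a morphism of Kleene spaces: preservation of $\sim$ is automatic once $\psi$ is monotone, since $\leq\cup\geq\subseteq\sim$ in any Kleene space, and preservation of $Y$ is the leaf condition just noted.

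Finally I would close the loop via the duality. By Lemma~\ref{Lem_kleeneSub}, $\AK(T)\in\cop{IS}(\F_{\KA})$; and since $\psi$ is an onto morphism and the duality is strong, $\AK(\psi)\colon\AK(X)\to\AK(T)$ is an embedding. As $\AK(X)=\AK(\XK(\alg{A}))\cong\alg{A}$ via $e_{\alg{A}}$, this exhibits $\alg{A}\in\cop{IS}(\AK(T))\subseteq\cop{IS}(\F_{\KA})$, giving (i). The only points demanding care are the verification that the prefix order really makes $T$ a tree in the precise sense of Lemma~\ref{Lem_kleeneSub} and that $\psi$ satisfies all the Kleene-space morphism conditions; both become routine once the ``every up-set is a chain'' reformulation of \emph{tree} is in hand.
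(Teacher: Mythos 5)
Your proof is correct, and its skeleton is exactly that of the paper: (ii)$\Leftrightarrow$(iii) is read off from Lemma~\ref{Lem:KleneConds} (your explicit dispatch of the trivial algebra, which the paper leaves implicit, is a welcome bit of care, since that lemma assumes non-triviality); (i)$\Rightarrow$(ii) dualizes an embedding into $\F_{\KA}(n)$ and uses that $(a,\dots,a)$ is the top of $\spa{K}^n$ and $\{0,1\}^n=Y_{\spa{K}^n}=\min(\spa{K}^n)$; and (ii)$\Rightarrow$(i) builds a finite tree Kleene space with an onto morphism to $\XK(\alg{A})$, then invokes Lemma~\ref{Lem_kleeneSub} and strongness of the duality. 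The one genuine divergence is the tree: you unravel $(X,\leq)$ into the poset of strictly descending sequences starting at the top, ordered by reverse prefix, whereas the paper forms the disjoint union of the maximal chains of $(X,\leq)$, adjoins a fresh top $x$, and maps $(c,C)\mapsto c$ and $x\mapsto\top_X$. The two constructions do the same job and are verified the same way (minimal elements of the tree land in $\min(X,\leq)=Y$, and $\leq\cup\geq\;\subseteq\;\sim$ makes $\sim$-preservation automatic once monotonicity is checked); the paper's chain construction is a bit leaner, while your unraveling is arguably more canonical, and your correct observation that the De~Morgan recolouring trick (replacing $\leq$ by a coarser $\leq'$ on the same carrier) cannot work here explains why both proofs must pass to a genuinely larger space.
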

\begin{proof}
(ii) $\Leftrightarrow$ (iii). Immediate from Lemma~\ref{Lem:KleneConds}.

(i) $\Rightarrow$ (ii). By Theorems~\ref{Th:FreeDual} and~\ref{Theo:KleeneDuality}, $\XK(\F_{\KA}(n))\cong\spa{K}^{n}$. The result follows 
straightforwardly from the fact that $(a,\ldots,a)\in K^{n}$ is the top element of $\spa{K}^{n}$, and that $\{0,1\}^{n}=\min(\spa{K}^{n})$.

 (ii) $\Rightarrow$ (i). Let $MC$ be the set of maximal chains of $(X,\leq)$ and let $Z=\{(c,C)\in X\times MC\mid c\in C\}\cup\{x\}$ where $x\notin X\times MC$. 
 Let $\leq_Z$ be the partial order on $Z$ defined as follows:
$$
u\leq_Z v \qquad \Leftrightarrow \qquad v=x\quad \mbox{ or }\quad u=(c_1,C), \ v=(c_2,C), \mbox{ and } c_1\leq c_2.
$$
Clearly, $( Z,\leq_Z)$ is a tree. Defining $Y_Z=\min(Z,\leq_Z)$ and $\sim_Z=\leq_Z\cup\geq_Z$,
it follows from Lemma~\ref{Lem_kleeneSub} that 
$\alg{B}=\AK(Z,  \leq_Z,\sim_Z, Y_Z, \mathcal{P}(Z))
$ belongs to $\cop{IS}(\F_{\KA})$.

Let $\eta\colon Z\to X$ be the map defined by $\eta(c,C)=c$ and let $\eta(x)$ be the top element of $(X ,\leq)$. From the definition of $\leq_Z$,  
$\eta$ is order-preserving, and, since $Y = \min(X,\leq)$, $\eta$ maps $Y_Z$ into $Y$.  It also follows that $\eta$ is onto $X$ and 
$\eta(\sim_Z)=\eta(\leq_Z\cup\geq_Z)\subseteq (\leq\cup\geq)\subseteq \thinspace\sim$. 
Hence $\eta$ is a morphism of Kleene spaces from $(Z,  \leq_Z,\sim_Z,Y_Z,\mathcal{P}(Z))$ onto $(X,\leq,\sim,Y,\tau)$. 
The map $\AK(\eta)\circ e_{\alg{A}}\colon \alg{A}\to \alg{B}$ is therefore a one-to-one homomorphism. Hence 
$\alg{A}\in\cop{IS}(\alg{B})\subseteq \cop{IS}(\F_{\KA})$. 

\end{proof}

So now by Lemmas~\ref{l:mainlemma},~\ref{Lem:KleneConds},~and~\ref{Lem_ISPUkleene}:

\begin{theorem} \label{t:KleeneClauses} 
$\{ \eqref{Eq:nontrivial}, (\ref{Eq:1prime}),  (\ref{Kleenerule})\}$ is a basis for the admissible clauses  of  Kleene algebras.
 \end{theorem}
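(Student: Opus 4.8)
The plan is to invoke the general machinery assembled in Section~\ref{s:admissibility} and specialized to the Kleene duality of Section~\ref{s:kleene}, reducing the theorem to the two lemmas that immediately precede it. By Lemma~\ref{l:mainlemma}(a), to show that $\{\eqref{Eq:nontrivial}, (\ref{Eq:1prime}), (\ref{Kleenerule})\}$ is a basis for the admissible clauses of $\cls{KA}$, it suffices to verify that for each finite Kleene algebra $\alg{B} \in \cls{KA}$, one has $\alg{B} \in \cop{IS}(\F_{\KA})$ if and only if $\alg{B}$ satisfies these three clauses. First I would note that $\cls{KA}$ is locally finite (being generated by the finite algebra $\alg{K}$, so that $\cls{KA} = \cop{ISP}(\alg{K})$ by Kalman's result), which is exactly the hypothesis Lemma~\ref{l:mainlemma} requires.

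The core equivalence is then supplied by Lemma~\ref{Lem_ISPUkleene}: for a finite Kleene algebra $\alg{A}$ with dual space $\XK(\alg{A}) = (X, \leq, \sim, Y, \tau)$, conditions (i) $\alg{A} \in \cop{IS}(\F_{\KA})$ and (iii) $\alg{A}$ satisfies \eqref{Eq:nontrivial}, (\ref{Eq:1prime}), and (\ref{Kleenerule}) are equivalent. The only gap to bridge is that Lemma~\ref{Lem_ISPUkleene} is stated for finite Kleene algebras with no explicit restriction, whereas the clause-level characterization in Lemma~\ref{Lem:KleneConds} is phrased for \emph{non-trivial} algebras. This is harmless: a trivial Kleene algebra fails \eqref{Eq:nontrivial} (since $\top = \bot$ holds, so the negative clause $\top \eq \bot \imp \emptyset$ is violated) and likewise trivially fails to embed into the non-trivial free algebra $\F_{\KA}$, so the biconditional holds vacuously on both sides for trivial algebras. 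Thus the equivalence of (i) and (iii) in Lemma~\ref{Lem_ISPUkleene}, together with this trivial-algebra observation, gives precisely condition (i) of Lemma~\ref{l:mainlemma}(a) with $\Lambda = \{\eqref{Eq:nontrivial}, (\ref{Eq:1prime}), (\ref{Kleenerule})\}$.

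Applying Lemma~\ref{l:mainlemma}(a), the implication (i) $\Rightarrow$ (ii) then yields that $\Lambda$ is a basis for the admissible clauses of Kleene algebras, which is exactly the statement of Theorem~\ref{t:KleeneClauses}. The proof is therefore a short two-step assembly: confirm local finiteness and the handling of trivial algebras, then cite Lemmas~\ref{l:mainlemma}, \ref{Lem:KleneConds}, and \ref{Lem_ISPUkleene}. I do not expect any genuine obstacle here, since all the substantive work---the dual characterization of $\cop{IS}(\F_{\KA})$ via a top element and the condition $Y = \min(X,\leq)$, and the translation of these dual conditions into the clauses---has already been carried out in the preceding lemmas. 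The only point demanding a moment's care is the bookkeeping around trivial algebras, ensuring that the \emph{non-trivial} hypothesis in Lemma~\ref{Lem:KleneConds} does not weaken the final basis claim; this is resolved by observing that \eqref{Eq:nontrivial} itself excludes exactly the trivial case on both sides of the desired biconditional.
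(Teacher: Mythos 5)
Your proof is correct and takes essentially the same route as the paper, which derives the theorem in one line from Lemmas~\ref{l:mainlemma}, \ref{Lem:KleneConds}, and~\ref{Lem_ISPUkleene}. Your explicit handling of local finiteness and of trivial algebras (which fail \eqref{Eq:nontrivial} and, because $\top\neq\bot$ in $\F_{\KA}$, admit no homomorphism into $\F_{\KA}$) merely spells out bookkeeping the paper leaves implicit.
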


Turning our attention next to quasi-identities:

\begin{lemma}\label{Lem_ISPkleene}
  Let $\alg{A}$ be a finite Kleene algebra and let $\XK(\alg{A})=(X,\leq,\sim,Y,\tau)$ be its dual space. Then the following are equivalent:
\begin{itemize}
 \item[{\rm (i)}] $\alg{A}\in\cop{ISP}(\F_{\KA})$.
 \item[{\rm (ii)}]   $Y=\min(X,\leq)$. 
\item[{\rm (iii)}] $\alg{A}$ satisfies \eqref{Kleenerule}.
 \end{itemize}
\end{lemma}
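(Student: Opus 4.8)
The plan is to follow the template of Lemma~\ref{l:dmaisp}, replacing the involution-based decomposition of the dual space by one governed by the distinguished subset $Y$. First, the equivalence (ii) $\Leftrightarrow$ (iii) will be read off from Lemma~\ref{Lem:KleneConds}(c): if $\alg{A}$ is non-trivial then it satisfies \eqref{Eq:nontrivial}, so \eqref{Kleenerule} holds iff $Y=\min(X,\leq)$ (and here $\min(X,\leq)\neq\emptyset$ automatically, since $X\neq\emptyset$), whereas if $\alg{A}$ is trivial then $X=\emptyset$ and both (ii) and (iii) hold vacuously. Throughout I will use that $\cls{KA}$ is locally finite and that $\spa{K}$ yields a strong duality, so that embeddings of Kleene algebras dualise to onto morphisms of Kleene spaces and products dualise to coproducts (disjoint unions).

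For (i) $\Rightarrow$ (ii), since $\alg{A}$ is finite and lies in $\cop{ISP}(\F_{\KA})$, it embeds into $\F_{\KA}(n)^m$ for some $n,m$, exactly as in Lemma~\ref{l:dmaisp}. Dualising gives an onto Kleene-space morphism $\eta\colon\coprod_{i=1}^m\spa{K}^n\to\XK(\alg{A})$, and in the coproduct the distinguished subset is $\coprod_{i=1}^m\{0,1\}^n$, which coincides with the set of minimal elements (recall $\{0,1\}^n=\min(\spa{K}^n)$). The inclusion $Y\subseteq\min(X,\leq)$ holds in \emph{every} Kleene space: if $x\in Y$ and $w\leq x$, then $w\sim w$ together with $w\leq x$ gives $x\sim w$ by the third axiom, whence $x\leq w$ by the second axiom, so $w=x$. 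For the reverse inclusion, given $x\in\min(X,\leq)$ I choose $w$ in the coproduct with $\eta(w)=x$ and then a minimal $w'\leq w$; now $w'$ lies in the distinguished subset of the coproduct, $\eta(w')\leq x$ forces $\eta(w')=x$ by minimality of $x$, and since morphisms map distinguished subsets into $Y$ we obtain $x\in Y$. Hence $Y=\min(X,\leq)$.

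For (ii) $\Rightarrow$ (i) I would decompose $X$ into the principal downsets $\downarrow t=\{w\in X\mid w\leq t\}$ for $t\in\max(X,\leq)$. Each $\downarrow t$, with the inherited order, relation $\sim$, and distinguished subset, is a (closed, since finite) substructure, hence a Kleene space; it has top element $t$, and its distinguished subset equals $Y\cap\mathord{\downarrow} t=\min(X,\leq)\cap\mathord{\downarrow} t=\min(\downarrow t,\leq)$ by hypothesis (ii). By Lemma~\ref{Lem_ISPUkleene} therefore $\AK(\downarrow t)\in\cop{IS}(\F_{\KA})$. Since every element of $X$ lies below some maximal element, the coproduct inclusion $\iota\colon\coprod_{t\in\max(X,\leq)}\downarrow t\to X$ is an onto morphism of Kleene spaces, so $\AK(\iota)$ is one-to-one and
\[
\alg{A}\cong\AK(\XK(\alg{A}))\in\cop{IS}\Bigl(\AK\bigl(\textstyle\coprod_{t}\downarrow t\bigr)\Bigr)=\cop{IS}\Bigl(\textstyle\prod_{t}\AK(\downarrow t)\Bigr)\subseteq\cop{ISP}(\F_{\KA}).
\]
The trivial case $X=\emptyset$ gives the empty product, i.e.\ the trivial algebra, which lies in $\cop{ISP}(\F_{\KA})$.

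The main obstacle I anticipate is the bookkeeping in (ii) $\Rightarrow$ (i): verifying that each principal downset, equipped with the inherited structure, is genuinely a Kleene space whose distinguished subset is precisely its set of minimal elements, so that Lemma~\ref{Lem_ISPUkleene} applies on the nose. The identity $Y\cap\mathord{\downarrow} t=\min(\downarrow t,\leq)$ is exactly where hypothesis (ii) is used essentially. The (i) $\Rightarrow$ (ii) direction is comparatively routine once one records the general inclusion $Y\subseteq\min(X,\leq)$ and lifts minimal elements through the onto dual morphism.
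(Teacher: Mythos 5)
Your proposal is correct, but it closes the cycle of implications by a different route than the paper in one leg, and it fills in details the paper leaves implicit in another. The treatment of (ii) $\Leftrightarrow$ (iii) via Lemma~\ref{Lem:KleneConds} and the empty-space/trivial-algebra case is the same in both. Where you prove (i) $\Rightarrow$ (ii) on the dual side --- dualizing an embedding $\alg{A}\to\F_{\KA}(n)^m$ to a surjection $\coprod_{i=1}^m\spa{K}^n\to\XK(\alg{A})$ and lifting minimal elements through it --- the paper instead proves (i) $\Rightarrow$ (iii) purely algebraically: $\KA$ is locally finite, so every finitely generated subalgebra of $\F_{\KA}$ is finite and satisfies \eqref{Kleenerule} by Lemma~\ref{Lem_ISPUkleene}, hence $\F_{\KA}$ satisfies \eqref{Kleenerule}, and quasi-identities persist under $\cop{I}$, $\cop{S}$, $\cop{P}$. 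The paper's argument is shorter and needs no duality in that direction; yours stays entirely on the dual side and yields as a by-product the structural fact that $Y\subseteq\min(X,\leq)$ holds in \emph{every} Kleene space (your derivation from the three axioms is correct), so that only the inclusion $\min(X,\leq)\subseteq Y$ is ever at issue. For (ii) $\Rightarrow$ (i) the paper says only that the argument is similar to Lemma~\ref{l:dmaisp}; you supply the actual construction --- decomposing $X$ into the principal downsets ${\downarrow}t$ of maximal elements, in place of the intervals $[x,f(x)]$ used in the De~Morgan case --- and you verify the key identity $Y\cap{\downarrow}t=\min(X,\leq)\cap{\downarrow}t=\min({\downarrow}t,\leq)$ (the second equality because downsets are downward closed), which is exactly what is needed to apply Lemma~\ref{Lem_ISPUkleene} to each piece; this is the correct adaptation. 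One microscopic point: in (i) $\Rightarrow$ (ii) the opening step ``$\alg{A}$ embeds into $\F_{\KA}(n)^m$'' tacitly assumes $\alg{A}$ is non-trivial, since a trivial Kleene algebra embeds only into the empty product; but in that case $X=\emptyset$ and (ii) holds vacuously, so nothing is lost.
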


\begin{proof}
(ii) $\Leftrightarrow$ (iii). The equivalence follows using  Lemma~\ref{Lem:KleneConds} parts (a) and (c), and the observation that 
${Y=\min(X,\leq)=\emptyset}$ implies $X=\emptyset$.

(i) $\Rightarrow$ (iii). $\KA$ is locally finite, so every finitely generated subalgebra 
of $\F_{\KA}$ is finite and, by Lemma~\ref{Lem_ISPUkleene},  satisfies \eqref{Kleenerule}. 
It follows that $\F_{\KA}$ and hence every algebra in $\cop{ISP}(\F_{\KA})$ satisfies this quasi-identity.

(ii) $\Rightarrow$ (i). Follows by a similar argument to that given in the proof of Lemma~\ref{l:dmaisp} (ii) $\Rightarrow$ (i). 

\end{proof}

Hence, by Lemmas~\ref{l:mainlemma}~and~\ref{Lem_ISPkleene}, we obtain (by different means) the result of~\cite{MR12}:

\begin{theorem}\label{t:BasisKleeneQ}
$\{\eqref{Kleenerule}\}$ is a basis for the admissible quasi-identities of Kleene algebras.
 \end{theorem}

\noindent
Finally, we provide bases for the admissible clauses and admissible quasi-identities of the class of Kleene lattices.

\begin{lemma}\label{l:kleenelattice} 
Let $\alg{A}$  be a finite Kleene lattice. Then the following are equivalent:
\begin{itemize}
\item[{\rm (i)}] $\alg{A} \in \cop{IS}(\F_{\cls{KL}})$.
\item[{\rm (ii)}] $\alg{A}$ satisfies \eqref{Eq:DMnontrivial} and~\eqref{Kleenerule}.
\end{itemize}
\end{lemma}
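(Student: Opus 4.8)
The plan is to reduce to the already-proved characterization of $\cop{IS}(\F_{\KA})$ in Lemma~\ref{Lem_ISPUkleene}, following exactly the pattern of Lemma~\ref{l:dml} for De~Morgan lattices and passing through the bounded enrichment $\overline{\alg{A}}$ of Section~\ref{Sec:DemLat}. First I would check that $\overline{\cdot}$ sends Kleene lattices to Kleene algebras: the Kleene condition $a \wedge \neg a \leqn b \vee \neg b$ survives the adjunction of fresh bounds, since it holds trivially whenever one of $a,b$ is $\top$ or $\bot$ (then the left side is $\bot$ or the right side is $\top$) and reduces to the Kleene condition of $\alg{A}$ otherwise. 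Consequently $\alg{F}_{\KA}(\kappa) \cong \overline{\alg{F}}_{\cls{KL}}(\kappa)$, just as in the De~Morgan case, and in particular $\overline{\F}_{\cls{KL}} \cong \F_{\KA}$.

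Granting this, the next step is the embedding equivalence $\alg{A} \in \cop{IS}(\F_{\cls{KL}})$ iff $\overline{\alg{A}} \in \cop{IS}(\F_{\KA})$. The forward direction applies $\overline{\cdot}$ to an embedding $\alg{A} \hookrightarrow \F_{\cls{KL}}$; the backward direction uses that any Kleene-algebra embedding $\overline{\alg{A}} \to \F_{\KA} \cong \overline{\F}_{\cls{KL}}$ preserves the constants $\top,\bot$ and is injective, whence it carries $A$ into $\F_{\cls{KL}}$ and restricts to an embedding of $\alg{A}$. By Lemma~\ref{Lem_ISPUkleene}, $\overline{\alg{A}} \in \cop{IS}(\F_{\KA})$ iff $\overline{\alg{A}}$ satisfies \eqref{Eq:nontrivial}, \eqref{Eq:1prime}, and \eqref{Kleenerule}. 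But $\overline{\alg{A}}$ is non-trivial, so it satisfies \eqref{Eq:nontrivial}, and its fresh top element is join prime, so it satisfies \eqref{Eq:1prime}; hence the whole characterization collapses to: $\overline{\alg{A}}$ satisfies \eqref{Kleenerule}.

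It then remains to prove the key equivalence, which I expect to be the main obstacle: $\overline{\alg{A}}$ satisfies \eqref{Kleenerule} iff $\alg{A}$ satisfies both \eqref{Eq:DMnontrivial} and \eqref{Kleenerule}. I would argue it by a direct case analysis on whether the witnesses are bounds or lie in the sublattice $A$. For the forward direction, any refutation of \eqref{Kleenerule} inside $A$ is still a refutation in $\overline{\alg{A}}$, so \eqref{Kleenerule} transfers to $\alg{A}$; and if $\alg{A}$ had a fixpoint $c = \neg c$, then the assignment $x = c$, $y = \bot$ gives $\neg x \leqn x$ and $x \wedge \neg y = c \leqn c = \neg x \vee y$ while $\neg y = \top \not\leqn \bot = y$, refuting \eqref{Kleenerule} in $\overline{\alg{A}}$; thus \eqref{Eq:DMnontrivial} is forced. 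For the converse, assuming \eqref{Eq:DMnontrivial} and \eqref{Kleenerule} in $\alg{A}$, I would verify \eqref{Kleenerule} in $\overline{\alg{A}}$ for arbitrary $a,b$: the cases $a = \bot$ (the premise $\neg a \leqn a$ fails) and $b = \top$ (the conclusion holds) are immediate; $a = \top$ makes the second premise coincide with the conclusion $\neg b \leqn b$; for $a,b \in A$ the premises descend to $A$ and \eqref{Kleenerule} in $\alg{A}$ applies; and the one remaining case $a \in A$, $b = \bot$ is exactly where \eqref{Eq:DMnontrivial} is used, since there the two premises force $a \leqn \neg a \leqn a$, i.e.\ $a = \neg a$, which \eqref{Eq:DMnontrivial} forbids, making the case vacuous. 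Chaining these equivalences gives (i) $\Leftrightarrow$ (ii).
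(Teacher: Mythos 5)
Your proposal is correct and follows essentially the same route as the paper: reduce to Lemma~\ref{Lem_ISPUkleene} via the bounded enrichment $\overline{\alg{A}}$ (noting \eqref{Eq:nontrivial} and \eqref{Eq:1prime} hold automatically), then prove that $\overline{\alg{A}}$ satisfies \eqref{Kleenerule} iff $\alg{A}$ satisfies \eqref{Eq:DMnontrivial} and \eqref{Kleenerule}, using the same fixpoint witness $y=\bot$ for the forward direction and the same case analysis (with the $b=\bot$ case excluded by \eqref{Eq:DMnontrivial}) for the converse. You in fact spell out details the paper leaves as an observation, namely why $\F_{\KA}\cong\overline{\F}_{\cls{KL}}$ and why $\alg{A}\in\cop{IS}(\F_{\cls{KL}})$ iff $\overline{\alg{A}}\in\cop{IS}(\F_{\KA})$.
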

\begin{proof}
Observe first that $\alg{A}\in\cop{IS}({\F_{\cls{KL}}})$ iff 
 $\overline{\alg{A}}\in\cop{IS}(\F_{\cls{KA}})$. Hence, by Lemma~\ref{Lem_ISPUkleene}, $\alg{A}\in\cop{IS}({\F_{\cls{KL}}})$ iff 
$\overline{\alg{A}}$ satisfies \eqref{Eq:nontrivial}, \eqref{Eq:1prime}, and \eqref{Kleenerule}. Since $\overline{\alg{A}}$  always 
satisfies \eqref{Eq:nontrivial} and $\eqref{Eq:1prime}$, it suffices to prove that $\overline{\alg{A}}$ satisfies~\eqref{Kleenerule} iff 
$\alg{A}$ satisfies~\eqref{Eq:DMnontrivial}  and~\eqref{Kleenerule}. 

Suppose that $\overline{\alg{A}}$ satisfies~\eqref{Kleenerule}. Then clearly $\alg{A}$ satisfies~\eqref{Kleenerule}. 
Also, if $c\in\alg{A}$ satisfies $c=\neg c$, then $c = c\wedge \neg \bot\leq \neg c\vee \bot = c$ in $\overline{\alg{A}}$ 
and, by \eqref{Kleenerule}, it follows that $\top=\neg \bot \leq \bot$, a contradiction. Hence $\alg{A}$ satisfies~\eqref{Eq:DMnontrivial}.

For the converse, suppose that  $\alg{A}$ satisfies~\eqref{Eq:DMnontrivial}  and~\eqref{Kleenerule}. 
Consider $c,d\in\overline{\alg{A}}$ such that $\neg c\leq c$ and $c\wedge \neg d\leq \neg c \vee d$. 
Observe that $d\neq\bot$. Indeed, if $d=\bot$, then $c=c\wedge \top\leq \neg c \vee \bot=\neg c$, 
so $c=\neg c$. But then $c\notin\{\bot,\top\}$ and, as $\alg{A}$ satisfies~\eqref{Eq:DMnontrivial}, also 
$c\notin A$, a contradiction. The following cases remain: 
(i) If $c,d\in A$, then, as $\alg{A}$ satisfies~\eqref{Kleenerule}, also $\neg d\leq d$; 
(ii) If $c\in\{\bot,\top\}$, then, as $\neg c\leq c$, it follows that $c=\top$ and $\neg d=\top\wedge \neg d\leq \bot \vee d=d$; 
(iii) If $d=\top$, clearly $\neg d=\bot\leq \top=d$. 

\end{proof}

\begin{lemma}\label{l:kleenelatticeISP} 
Let $\alg{A}$  be a finite Kleene lattice. Then the following are equivalent:
\begin{itemize}
\item[{\rm (i)}] $\alg{A} \in \cop{ISP}(\F_{\cls{KL}})$.
\item[{\rm (ii)}] $\alg{A}$ satisfies~\eqref{Kleenerule}.
\end{itemize}
\end{lemma}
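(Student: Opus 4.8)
The plan is to follow the template of Lemma~\ref{l:dml2}, reducing this ISP-characterization to the IS-characterization already secured in Lemma~\ref{l:kleenelattice}. The crux will be a single observation: a \emph{non-trivial} Kleene lattice that satisfies \eqref{Kleenerule} must also satisfy \eqref{Eq:DMnontrivial}. Equivalently, \eqref{Kleenerule} cannot hold in a non-trivial Kleene lattice that possesses a fixpoint of $\neg$. Once this is in hand, both implications fall out quickly, just as in the De~Morgan lattice case.

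To prove the observation I would argue contrapositively. Suppose $\alg{B}$ is a non-trivial Kleene lattice with an element $m$ satisfying $m=\neg m$. Non-triviality first yields some $c\in B$ with $\neg c\not\leq c$: for if $\neg x\leq x$ held for every $x$, then applying $\neg$ would also give $x\leq\neg x$, whence $\neg$ is the identity; but an order-reversing identity map forces $\alg{B}$ to be trivial, contradicting our assumption. Now I would instantiate \eqref{Kleenerule} with $x:=m$ and $y:=c$. The first premise $\neg m\leq m$ holds since $m=\neg m$, and the second premise holds because $m\wedge\neg c\leq m=\neg m\leq\neg m\vee c$; yet the conclusion $\neg c\leq c$ fails by the choice of $c$. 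Hence $\alg{B}$ refutes \eqref{Kleenerule}, as required. Note that this step is pure lattice arithmetic and does not even invoke the Kleene condition.

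With the observation established, I would finish as follows. For (i)$\Rightarrow$(ii), since $\cls{KL}$ is locally finite, every finitely generated subalgebra of $\F_{\cls{KL}}$ is finite and lies in $\cop{IS}(\F_{\cls{KL}})$, so satisfies \eqref{Kleenerule} by Lemma~\ref{l:kleenelattice}; hence $\F_{\cls{KL}}$ satisfies \eqref{Kleenerule}, and as this quasi-identity is preserved by $\cop{ISP}$, so does every member of $\cop{ISP}(\F_{\cls{KL}})$ --- exactly as in the proof of Lemma~\ref{Lem_ISPkleene}. For (ii)$\Rightarrow$(i), a trivial $\alg{A}$ lies in $\cop{ISP}(\F_{\cls{KL}})$ as an empty product, whereas a non-trivial $\alg{A}$ satisfying \eqref{Kleenerule} also satisfies \eqref{Eq:DMnontrivial} by the observation, so Lemma~\ref{l:kleenelattice} places it in $\cop{IS}(\F_{\cls{KL}})\subseteq\cop{ISP}(\F_{\cls{KL}})$.

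The only genuinely new content is the observation of the first paragraph, and I expect its proof --- the short calculation above --- to be the main (and essentially only) obstacle. The remaining points to watch are the non-triviality argument supplying the witness $c$ with $\neg c\not\leq c$, and the routine transfer of \eqref{Kleenerule} from the finite subalgebras of $\F_{\cls{KL}}$ to $\F_{\cls{KL}}$ itself via local finiteness.
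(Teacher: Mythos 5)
Your proof is correct and takes essentially the same route as the paper: the (i)$\Rightarrow$(ii) direction is the paper's argument verbatim (local finiteness plus Lemma~\ref{l:kleenelattice}), and (ii)$\Rightarrow$(i) reduces to Lemma~\ref{l:kleenelattice} once algebras with a negation fixpoint are disposed of. The only difference is organizational: the paper shows directly that \eqref{Kleenerule} together with a fixpoint $c=\neg c$ forces triviality (instantiating the rule at $x:=c$, $y:=c\wedge d$ for arbitrary $d$), whereas you prove the contrapositive---a non-trivial Kleene lattice with fixpoint $m$ refutes \eqref{Kleenerule} at $x:=m$, $y:=c$, where the witness $c$ with $\neg c\not\leq c$ is supplied by non-triviality---and both computations are sound.
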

\begin{proof}
(i) $\Rightarrow$ (ii). 
$\cls{KL}$ is locally finite, so every finitely generated subalgebra of $\F_\cls{KL}$ is finite and, by Lemma~\ref{l:kleenelattice}, 
satisfies \eqref{Kleenerule}. It follows that $\F_\cls{KL}$ and hence every algebra in $\cop{ISP}(\F_\cls{KL})$ satisfies~\eqref{Kleenerule}.

(ii) $\Rightarrow$ (i).  
If $\alg{A}$ satisfies also \eqref{Eq:DMnontrivial}, then by Lemma~\ref{l:kleenelattice}, 
$\alg{A}\in\cop{IS}(\F_{\cls{KL}})\subseteq\cop{ISP}(\F_{\cls{KL}})$. 
If  $\alg{A}$ does not satisfy~\eqref{Eq:DMnontrivial}, there exists $c\in \alg{A}$ such that $\neg c= c$. Then for any $d\in \alg{A}$, 
\[
c\wedge \neg(c\wedge d)\leq c=\neg c\leq \neg c \vee (c \wedge d).
\] 
By~\eqref{Kleenerule}, it follows that $\neg (c \wedge d)\leq c \wedge d$. On the other hand, as $c \wedge d \leq c$, 
\[
c \wedge d \leq c=\neg c\leq \neg(c \wedge d).
\]
It follows that $c \wedge d =\neg(c \wedge d)=c$. So $\alg{A}$ is a trivial algebra and a member of $\cop{ISP}(\F_{\cls{KL}}).$
\end{proof}

Hence, by Lemmas~\ref{l:mainlemma},~\ref{l:kleenelattice}, and~\ref{l:kleenelatticeISP}:

\begin{theorem} \label{t:Kleenemain} 
$\{\eqref{Eq:DMnontrivial},\eqref{Kleenerule}\}$ and $\{\eqref{Kleenerule}\}$ are 
bases for the admissible clauses and admissible quasi-identities of Kleene lattices, respectively.
 \end{theorem}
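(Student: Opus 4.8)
The plan is to obtain both bases as immediate consequences of the general criterion in Lemma~\ref{l:mainlemma}, exactly as was done for the previous varieties treated in this paper. Since $\cls{KL}$ is a variety generated by a finite algebra, it is a locally finite quasivariety, so Lemma~\ref{l:mainlemma} applies with $\Q = \cls{KL}$. That lemma reduces the task of exhibiting a basis for the admissible clauses (respectively, quasi-identities) of $\cls{KL}$ to that of producing a set of clauses (respectively, quasi-identities) $\Lambda$ that axiomatizes, within the finite members of $\cls{KL}$, membership in $\cop{IS}(\F_{\cls{KL}})$ (respectively, $\cop{ISP}(\F_{\cls{KL}})$).

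For the admissible clauses, I would set $\Lambda = \{\eqref{Eq:DMnontrivial}, \eqref{Kleenerule}\}$ and apply Lemma~\ref{l:mainlemma}(a). Condition~(i) of that part---that a finite Kleene lattice $\alg{A}$ lies in $\cop{IS}(\F_{\cls{KL}})$ iff it satisfies $\Lambda$---is precisely Lemma~\ref{l:kleenelattice}. Hence the equivalent condition~(ii) holds, so $\Lambda$ is a basis for the admissible clauses. For the admissible quasi-identities, I would set $\Lambda = \{\eqref{Kleenerule}\}$ and apply Lemma~\ref{l:mainlemma}(b); here condition~(iii)---membership in $\cop{ISP}(\F_{\cls{KL}})$ iff satisfaction of \eqref{Kleenerule}---is exactly Lemma~\ref{l:kleenelatticeISP}, so $\{\eqref{Kleenerule}\}$ is a basis for the admissible quasi-identities.

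Thus the theorem itself is a formal assembly: all of the mathematical content has been front-loaded into the two characterization lemmas, whose proofs in turn rest on the Kleene duality (Theorem~\ref{Theo:KleeneDuality}) and the dual description in Lemma~\ref{Lem_ISPUkleene}. The only matter requiring attention is a syntactic one, namely that the chosen $\Lambda$ are of the right type for the relevant part of Lemma~\ref{l:mainlemma}: the clause \eqref{Kleenerule} has a single identity $\neg y \leqn y$ as its consequent and so is a genuine quasi-identity, making it a legitimate input to part~(b), while \eqref{Eq:DMnontrivial} is a negative clause, so the pair $\{\eqref{Eq:DMnontrivial}, \eqref{Kleenerule}\}$ is a set of clauses eligible for part~(a). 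I therefore expect no real obstacle at the level of the theorem; whatever difficulty there is lies entirely in the already-established Lemmas~\ref{l:kleenelattice} and~\ref{l:kleenelatticeISP}.
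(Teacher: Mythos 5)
Your proposal is correct and matches the paper's proof exactly: the paper derives Theorem~\ref{t:Kleenemain} as an immediate consequence of Lemmas~\ref{l:mainlemma}, \ref{l:kleenelattice}, and~\ref{l:kleenelatticeISP}, precisely the assembly you describe. Your added remark checking that \eqref{Kleenerule} is a genuine quasi-identity (so it is admissible input for Lemma~\ref{l:mainlemma}(b)) is a sensible, if implicit-in-the-paper, detail.
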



\subsection*{Acknowledgements}

\noindent
The authors acknowledge support from Marie Curie Reintegration grant PIRG06-GA-2009-256492 and
Swiss National Science Foundation grants 20002{\_}129507 and grant 200021{\_}146748. We thank 
Christoph R{\"o}thlisberger for his helpful comments.


\end{document}